\newtheorem{thm}{Theorem}[section] 
\newtheorem{prop}[thm]{Proposition}
\newtheorem{cor}[thm]{Corollary}
\newtheorem{lemma}[thm]{Lemma}
\newtheorem{addendum}[thm]{Addendum}
\theoremstyle{definition}
\newtheorem{Def}{Definition}
\newtheorem*{rems}{Remarks}
\newtheorem{Rem}[thm]{Remark}
\newtheorem*{rem}{Remark}
\theoremstyle{remark}
\newtheorem*{notation}{Notation}
\newtheorem*{example}{Example}
\newtheorem*{ack}{Acknowledgment}
\def\sk5{\vskip 5pt}
\def\newline{\hfil\break}
\def\R{\mathbb R}
\def\C{\mathbb C}
\def\K{\mathbb K}
\def\del{\partial}
\def\sk{\vskip}
\DeclareMathOperator{\upset}{\uparrow}
\numberwithin{equation}{section}
\title{Topological Posets and Tropical Phased Matroids}
\author{Ulysses Alvarez and Ross Geoghegan}
\address{\noindent Ulysses Alvarez, Department of Mathematics, 
University of Alabama, 
Tuscaloosa, AL 35487, USA 
\newline
\vskip 1pt
Ross Geoghegan, Department of Mathematics and Statistics,
Binghamton University (SUNY),
Binghamton, NY 13902-6000, USA}
\email{uaalvarez@ua,edu,
ross@math.binghamton.edu}
\date{May 13, 2024}
\subjclass{Primary 54F05, 52B40; Secondary 14T10, 54F35}
\keywords{topological poset, matroid, hyperfield, tropical}
\begin{document}
\fontsize{12}{13pt} \selectfont  
    
\begin{abstract}

For a discrete poset $\mathcal X$, McCord proved that the natural
map $|{\mathcal X}|\to {\mathcal X}$, from the order complex to the
poset with the Up topology, is a weak homotopy equivalence. Much later,
\v{Z}ivaljevi\'{c} defined the notion of order complex for a topological
poset. For a large class of topological posets we prove the analog
of McCord's theorem, namely that {\it the natural map from the order
complex to the topological poset with the Up topology is a weak homotopy
equivalence}. A familiar topological  example is the Grassmann poset
$\mathcal{G}_n(\mathbb{\R})$
of proper non-zero linear subspaces of $\R^{n+1}$ partially ordered by
inclusion. But our motivation in topological combinatorics is to apply
the theorem to posets associated with tropical phased matroids over the
tropical phase hyperfield, and in particular to elucidate the tropical version of the
MacPhersonian
Conjecture. This is explained in Section 2.
\end{abstract}

\maketitle

\section{Introduction}\label{intro}
\subsection{A Short Outline}
In this paper we prove a fundamental theorem about topological posets. While our
theorem is
quite general, we are motivated by its application to matroids, specifically to a certain
complex
analog of oriented matroids. These analogous objects are ``tropical phased matroids", a
fairly new notion supported by important foundational work of Baker and Bowler
\cite{BB1}. While the
name sounds exotic (it has connections with tropical geometry through the work of Viro
\cite{Viro1}), the tropical phased matroid is a complex analog of the well-loved oriented
matroid. In
a sense, we are simply moving from something ``real" to something ``complex".

In more detail, let us start with something familiar, a discrete poset $X$. Consider the
associated
order complex of $X$, denoted by $|X|$, i.e. the geometric realization of the abstract
simplicial
complex whose simplices are the finite totally ordered sets in $X$. Long ago, McCord
\cite{Mc1}
proved that the natural  ``comparison map" $|X|\to X$ is a weak homotopy equivalence
when
$X$ is given the ``Up topology" whose open sets are generated by sets of the form
$U_{x}=\{y\mid x\leq y\}$ for all $x\in X$. Much later, McCord's theorem was used by
combinatorialists, for example in \cite{Ba1}, \cite{Ba2}, \cite{An} and \cite{AD}.

After defining topological posets, and limiting ourselves with some extra hypotheses, we
prove
the appropriate analog of McCord's theorem. Of course this requires a definition of the
order
complex (for which we follow \cite{Zi}) and of a comparison map from that
complex to the topological poset with the Up topology.

As explained in \cite{AD}, McCord's theorem is relevant to the MacPhersonian
Conjecture. This
conjecture says, roughly, that the homotopy type of the Grassmannian, 
$\text{Gr}(k, \mathbb{{\R}}^n)$ of linear
$k$-subspaces in ${\R}^n$ can be read off from the (much simpler) discrete poset of all
rank $k$
oriented matroids on the set $\{1,2,\dots ,n\}$. Until recently, it was not possible to have
a
complex analog of this statement because, while the complexification of
$\text{Gr}(k,{\R}^n)$
is clearly
$\text{Gr}(k,{\C}^n)$, it had not been clear what should serve as a suitable replacement
for the
discrete
poset of oriented matroids. We have found that tropical phased matroids fill that gap
nicely. The
place of McCord's Theorem on discrete posets is then filled by our main Comparison
Theorem on topological posets.

To give a clear account of what is hinted at in this introduction, we present in Section $2$
a full
discussion of the relevant aspects of tropical phased matroids.  In particular, we explain 
the complex version of the MacPhersonian Conjecture as well as the broader place of the
conjecture in the relationship between topology and combinatorics. 
It is our hope that the simplification afforded by the Up topology will provide 
new insight into the complex version of the Conjecture, as was already observed  for the
real case
in \cite{AD}. The rest of the paper is then devoted to foundational material on topological
posets needed for our proof of the Comparison Theorem.
\vskip 5pt
We end this short outline by pointing out that this is a paper about topological posets in
general, and we believe that the many technical matters discussed along the way form a
contribution to the literature of such objects, quite independent of its application to
matroids\footnote{For example, Proposition \ref{cover}, Theorem \ref{inverse} and
Theorem \ref{ok}.}.
\vskip 5pt
\begin{ack} We thank Laura Anderson for telling us about this problem, for educating us
on its
connection with tropical phased matroids, and for suggesting the relevance of Hironaka's
Theorem \ref{semi} in showing that some relevant ``tropical sets" are geometric.  
\end{ack}

\vskip 5pt
\subsection{Topological Posets}\label{toppo}
\begin{Def} A {\it topological poset} $({\mathcal X},\leq ,{\mathcal T})$ is a poset
$({\mathcal X},\leq )$ equipped with a Hausdorff topology $\mathcal
T$ on the set ${\mathcal X}$ such that the order relation ${\mathcal P} \coloneqq
\{(x,y)\in
{\mathcal X}\times {\mathcal X}\mid x\leq y\}$ is a closed subspace of
${\mathcal X}\times {\mathcal X}$. 
\end{Def}
\vskip 5pt
A simple example illustrates this: Let $X$ be the subset of the Euclidean plane consisting
of the
unit circle $S^1$ and the origin $0$; the poset inequality is fully defined by $0<x$ for all
$x\in
S^1$. This becomes a topological poset if we remember the usual topology on $X$
inherited
from the plane. The order relation ${\mathcal P}$ is $\{0\}\times X$. We will have more
to say
about this example below in connection with realizations.
\vskip 5pt
\begin{Def} Following \v{Z}ivaljevi\'{c} we say that a topological poset is {\it
mirrored}, and is called an
$M${\it-poset}, if it comes equipped with
a poset map $\mu \colon {\mathcal X}\to {\mathcal R}$, where ${\mathcal R}$
is a finite poset, satisfying
\begin {itemize}
\item $x<y$ implies $\mu (x)<\mu (y)$,
\item for all $r\in {\mathcal R}$, $X_{r} \coloneqq \mu^{-1}(r)$ is a non-empty closed
subset of ${\mathcal X}$.
\end{itemize}
\end{Def}
The map $\mu$ is a {\it mirror} of $\mathcal X$.
\vskip 5pt                              
In our simple example, $\mathcal R$ is $\{1,2\}$; $\mu (0)=1$ and $\mu (S^{1})=2$.
\vskip 5pt
It follows that ${\mathcal X}$ is the topological sum\footnote{The {\it topological sum}
or {\it
coproduct}, 
$\coprod Y_{\alpha}$, of a collection $Y_{\alpha}$ of topological spaces is the disjoint
union
of the spaces $Y_{\alpha}$ with its topology defined by: $U$ is open in $\coprod
Y_{\alpha}$ if
and only if for every $\alpha $, $U\bigcap Y_{\alpha}$ is open in $Y_{\alpha}$. } 
$\coprod _{r\in {\mathcal R}}X_r$.
\vskip 5pt
A less trivial example is the real {\it Grassmann Poset} $\mathcal{G}_{n}({\R})$; here,
${\mathcal X}$
is the set of all {\it proper} linear subspaces of ${\R}^{n+1}$ of positive dimension,
partially
ordered by inclusion, ${\mathcal R}$ is the set $[n]$ of positive integers $\leq n$ with the
natural ordering, and $\mu$ takes the $k$-dimensional subspaces to the integer $k$.

\begin{notation} When $A\subseteq {\mathcal X}$ we define 
$${\upset A} \coloneqq \{x\in {\mathcal X}\mid a\leq x \text{ for some } a\in A\}.$$
\noindent When ${\mathcal A}$ is a family of subsets of $\mathcal X$, we define $\upset
{\mathcal A} \coloneqq \{\upset A\mid A\in \mathcal A\}$, but when $a\in \mathcal X$ 
we write $\upset a$ rather than $\upset \{a\}$.
For $A\subseteq X_r$ and $s>r$, we write $A^{(s)} \coloneqq (\upset A)\bigcap X_s$. 
\end{notation}
\vskip 5pt

Each $X_r$ inherits a topology from ${\mathcal X}$. We choose a basis ${\mathcal
U}_{r}$
for the open sets in $X_r$. 

\begin{Def} We will assume from now on that our $M$-posets have the
{\it Openness Property} that when $U$ is open in $X_r$ then $\upset U$ is open in 
$\mathcal X$; equivalently, for each $s>r$, the
set $U^{(s)}$ is open in $X_s$.  
\end{Def}

\begin{Def} The {\it Up topology} on ${\mathcal X}$ has basis $\bigcup _{r}\{\upset
{\mathcal
U}_{r}\mid
r\in {\mathcal R}\}$. 
\end{Def}
The Openness Property implies that this is a basis for a topology.
\vskip 5pt
It is important to distinguish between the Original topology on ${\mathcal X}$ (which is
Hausdorff) and the Up topology on ${\mathcal X}$ (which is $T_0$ but, in general, not
$T_1$.)
The $M$-posets discussed in this paper are 
considered to have both topologies, the ``Up'' being derived from the ``Original''. The
Openness
Assumption ensures that the Up topology agrees with the Original topology on each
$X_r$; i.e.
the inclusion of $X_r$ into $\mathcal X$ with the Up topology is a topological
embedding.

\begin{example} Let ${\mathcal X}$ be the real Grassmann Poset
$\mathcal{G}_{2}({\R})$. Here, $X_{1}$ is the collection of lines in ${\R}^3$
containing $0$ and $X_{2}$ is the collection of
planes in ${\R}^3$ containing $0$, partially ordered by inclusion. It is instructive to
consider the
two topologies in this situation: 
\begin{itemize}
\item {\it The Ordinary topology:} We indicate a basis. Given a line $\ell$ in ${\R}^3$
and a
number $\epsilon >0$, a basic neighborhood of $\ell$ in $X_1$ consists of all lines in
${\R}^3$
whose angle with $\ell$ is less than $\epsilon$. Given a plain $p$ in ${\R}^3$ and a
number
$\epsilon >0$, a basic neighborhood of $p$ in $X_2$ consists of all plains in ${\R}^3$
whose
angle with $p$ is less than $\epsilon$. Then $\mathcal X$ with the Ordinary topology is
just
$X_{1}\coprod X_{2}$.  
\item {\it The Up topology:} Again, we indicate a basis. Given a line $\ell$ in ${\R}^3$,
a basic 
``Up-neighborhood" $W$ of $\ell$ in $\mathcal X$ consists of a basic neighborhood
$W_1$ of
$\ell$ in
$X_1$ together with all planes in ${\R}^3$ that contain a line lying in $W_1$. Given a
plane
$p$
in $X_2$, a basic ``Up-neighborhood" of $p$ is just a basic neighborhood of $p$ in the
Ordinary
topology. 
\end{itemize}
\end{example}
\vskip 5pt
\subsection {The Order Complex and the Comparison Map.}\label{order} 
In \cite{Zi}, a
simplicial space $\Delta ({\mathcal X})$, called the order complex\footnote{Strictly
speaking, this is the geometric realization of a simplicial
topological
space.}, is associated with each topological poset ${\mathcal X}$.
For $M$-posets where the spaces $X_r$ are locally compact polyhedra (as will always be
the
case in this paper) there is a rather simple definition as follows:
\vskip 5pt
\begin{Def}Each element $z$ of the topological join ${\Asterisk_{r\in {\mathcal
R}}}X_r$ has the form 
$z={\sum}t _{i}x_{i}$ where $x_i\in X_i, 
t _{i}\geq 0, \text{ and } \sum t_i=1$. Define
the {\it support} of $z$ to be 
$\text{supp}(z) \coloneqq \{i\in {\mathcal R}\mid t _{i}>0\}$. Then the {\it order
complex}
is 
$$\Delta ({\mathcal X}) \coloneqq 
\{z\in {\Asterisk_{r\in {\mathcal R}}}X_r\mid\text{supp}(z)\text{ is
a chain in }{\mathcal R}, \text{ and if }i<j \text{ in supp}(z) \text{ then } 
x_{i}<x_{j} \text{ in
}{\mathcal X}\}.$$
\end{Def}

\noindent We will sometimes identify $X_r$ with the subspace of $\Delta ({\mathcal
X})$
whose support is the singleton $\{r\}$.
\begin{example}
We return to our simple example, above, where $\mathcal X$ is $S^{1}\coprod \{0\}$. If
$X$ is
considered as a topological poset, then its realization $\Delta ({\mathcal X})$ is
homeomorphic
to a planar disk. But if $\mathcal X$ is just a discrete poset then its realization
$|{\mathcal X|}$
is a $1$-dimensional complex consisting of uncountably many copies of the unit interval
with
their $0$-points identified. 


\end{example}  
\begin{example} Vassiliev \cite{V1}, \cite{V2} proved that for $\mathbb K$ the reals,
the
complex numbers, or the quaternions, and 
$\mathcal{G}_{n}({\mathbb K})$ the Grassmann Poset of proper non-zero linear
subspaces of ${\mathbb K}^{n+1}$, the corresponding order complex, 
$\Delta (\mathcal{G}_{n}({\mathbb K}))$, is homeomorphic to the sphere $S^m$ where
$m={n+1 \choose
2}d+n-1$, $d$ being the dimension of $\mathbb K$ over $\mathbb R$.
\end{example}

\begin{Def} The {\it Comparison Map} $f \colon \Delta ({\mathcal X})\to {\mathcal X}$
is
defined by
$f(z)=\text{max}\{x_{i}\mid i\in \text{ supp}(z)\}$. It is not hard to prove that $f$ is
continuous\footnote{The word ``map'' is used in this paper to mean that the given
function is
continuous.} with respect to the Up
topology on ${\mathcal X}$.
\end{Def}
\vskip 5pt
Our aim is to prove that under reasonable topological and
geometric assumptions, and considering $\mathcal X$ with the Up topology,
the Comparison Map $f$ is a weak homotopy equivalence.
\vskip 5pt
\subsection{Polyhedral $M$-posets}
\begin{Def} An $M$-poset ${\mathcal X}$ is {\it polyhedral} if each $X_i$ is a locally
compact
polyhedron\footnote{In Section \ref{alpha} we go into detail about our use
of polyhedral language.}, and $\mathcal X$ has the {\it Polyhedral Diagonal Property},
namely:
The order relation $\mathcal P$ is a (closed)  subpolyhedron of ${\mathcal X}\times
{\mathcal
X}$; here $\mathcal X$ has the Original topology.
\end{Def} 

\vskip 5pt
\subsection{Convenient Choice of Bases}\label{convenient} When the space $X_r$ is
polyhedral each point has a basic
system of compact neighborhoods which are piecewise linear (PL) cones on their
frontiers\footnote{See Section \ref{alpha}.}. We may assume that the basis ${\mathcal
U}_r$ of $X_r$ 
consists of the interiors of such cones. In particular, each such set is contractible and its
closure is
a compact contractible subpolyhedron of $X_r$. {\it Our basis ${\mathcal
U}=\bigcup _{r}\{\upset {\mathcal U}_{r}\}$ for the topology of $\mathcal X$
will always be understood to consist of such sets.}

\vskip 5pt
\subsection{Geometric Posets and Main Theorem}\label{geom}
\begin{Def} A compact subset $C$ of a polyhedron $D$ has {\it trivial shape} if $C$
can be contracted to a point in any of its neighborhoods\footnote{Shape Theory is
discussed in
Appendix \ref{appendix}.} in $D$.
\end{Def} 
\noindent This
is known to be an intrinsic property of $C$, independent of $D$. Note that when $C$
is contractible then $C$ has trivial shape, but trivial shape is more
general: for example, the Topologist's Sine Curve in ${\R}^2$ has trivial
shape but has two path components.
\vskip 5pt
\begin{notation} When $(M,d)$ is a metric space, the space of non-empty compact
subsets of $M$ with the Hausdorff metric is denoted by $cM$.
\end{notation}
\vskip 5pt
\begin{Def}\label{geometric} A polyhedral $M$-poset $({\mathcal X},\leq ,{\mathcal
T})$ is
{\it geometric} if, for each $r<s\in {\mathcal R}$ it has the
following 
\newline {\bf Additional Properties:}
\vskip 5pt
\begin{enumerate}[(1)]
\item [A1]: When $x\in X_r$, the set $x^{(s)}$ is compact and non-empty.
\item [A2]: The map $\pi _{r,s} \colon X_{r}\to cX_{s}$ defined by 
$x\mapsto x^{(s)}$ is continuous.
\item [A3]: When  $U\in {\mathcal U}_r$ and $y \in \overline
U^{(s)}$, then $\{x\in {\overline U}\mid x<y\}$ has trivial
shape\footnote{This implies that for every such $y$ there is some $x<y$. This is because
the
empty space does not have trivial shape.}.
\end{enumerate}
\end{Def}
\vskip 5pt
We note that ``geometric" only involves the Original topology.
\begin{thm}\label{main} {\bf (Comparison Theorem)} When the $M$-poset ${\mathcal
X}$ is
geometric and carries the Up topology,  the map
$f \colon \Delta ({\mathcal X})\to {\mathcal X}$ is a weak homotopy equivalence.  
\end{thm}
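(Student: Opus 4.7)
The plan is to apply McCord's theorem (Theorem~\ref{mccord}) with the basis $\mathcal U = \bigcup_r\{\upset U : U\in\mathcal U_r\}$ for the Up topology on $\mathcal X$; it then suffices to show that for every basic open set $B=\upset U$ (with $U\in\mathcal U_r$) the restriction $f\colon f^{-1}(B)\to B$ is a weak homotopy equivalence. I will do this by proving both $B$ and $f^{-1}(B)$ are weakly contractible, so that any continuous map between them (in particular $f$) is automatically a weak equivalence.

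For the weak contractibility of $B=\upset U$ I would adapt McCord's argument for discrete posets. In the finite case the minimum element $x$ of $\upset x$ is a ``generic point'' and the homotopy $H(y,t)=y$ for $t<1$, $H(y,1)=x$ is continuous in the Up topology. The analog here is the contractible subset $U\subseteq B$: one checks that every Up-open set containing some $u\in U$ also contains every $y\in\upset U$ with $u\le y$, so the homotopy $H(y,t)=y$ for $t<1$ and $H(y,1)=\sigma(y)\in U$ (with $\sigma(y)\le y$) is continuous provided the selection $\sigma\colon\upset U\to U$ is continuous. The Additional Properties are designed precisely for this: A1 says the multi-function $y\mapsto\{u\in U:u\le y\}$ has non-empty compact values, A2 (continuity of $\pi_{r,s}$) gives upper semicontinuity, and A3 says the values have trivial shape. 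A shape-theoretic approximate-selection argument, relying on the polyhedral structure of $X_r$ and the compactness of $\overline U$, then produces either a continuous $\sigma$ or, what is enough for weak contractibility, a family of approximate continuous sections nullhomotoping every map from a sphere into $B$.

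For the weak contractibility of $f^{-1}(B)$ I would use the join structure on $\Delta(\mathcal X)$. The linear retraction
$$H(z,t)\;=\;(1-t)\!\sum_{i\neq s}t_i x_i\;+\;\bigl((1-t)t_s+t\bigr)\,x_s, \qquad z=\sum t_i x_i,\ \ s=\max\mathrm{supp}(z),$$
is continuous on $f^{-1}(B)$ and deformation retracts it onto the vertex subspace $Z=\coprod_{s\ge r}(B\cap X_s)$. A second stage then contracts $Z$ through $f^{-1}(B)$ down to the level-$r$ piece $U$ by sliding each $y\in B\cap X_s$ along the edge $[\sigma(y),y]\subset\Delta(\mathcal X)$ (for $\sigma$ as above), which lies in $f^{-1}(B)$ because its maximum $y$ remains in $B$. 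Composed with the contraction of $U$, this yields the desired null-homotopy.

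The principal technical obstacle is the construction of $\sigma$ in the non-Hausdorff Up topology. Trivial shape (A3) is strictly weaker than contractibility of the fibers, so Michael-type selection theorems do not apply directly; I expect the proof will proceed by PL approximation, using the polyhedrality of $\mathcal X$ and the PL-cone form of the basis to replace the trivial-shape predecessor sets by genuinely contractible polyhedral choices up to homotopy, which is then enough to produce the required null-homotopies one sphere at a time.
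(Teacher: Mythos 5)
Your overall plan is the right one and matches the paper's strategy: apply McCord's theorem, reducing to showing that each basic open set $B=\upset U$ and its preimage $f^{-1}(B)$ are weakly contractible. However, both halves of your execution have genuine gaps.

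\textbf{The retraction of $f^{-1}(B)$ onto the vertex set is discontinuous.} Your formula slides $z=\sum t_ix_i$ toward its highest-level vertex $x_s$, $s=\max\operatorname{supp}(z)$. Take $\mathcal R=[2]$, $U\in\mathcal U_1$, $u\in U$, $y\in U'$ with $u<y$, and consider $z_\epsilon=(1-\epsilon)u+\epsilon y\in f^{-1}(B)$. As $\epsilon\to 0$, $z_\epsilon\to u$ in the join topology, yet $H(z_\epsilon,1)=y$ for all $\epsilon>0$ while $H(u,1)=u$. Since $u$ itself lies in $f^{-1}(B)$ whenever $U$ is not at the top level of $\mathcal R$, this discontinuity cannot be avoided by restricting the domain. (The ``slide to the top vertex'' move only works, as in the paper's treatment of $U\subseteq X_2$ with $\mathcal R=[2]$, when the lower vertices being deleted are \emph{not} in $f^{-1}(B)$, so the problematic limit points leave the space.) The paper instead establishes that $\Delta(\upset\overline U)$ is a strong deformation retract of $f^{-1}(\upset\overline U)$ via a nested-polyhedra/shape-theory argument (Theorem \ref{inverse}), and then contracts $\Delta(\upset\overline U)$ by exhibiting a cell-like map onto a cone $\mathcal C\,\overline U'$ and invoking Lacher's theorem (Proposition \ref{cone}). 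In the general case this is carried out by induction on $|\mathcal R|$, splitting off one level at a time.

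\textbf{The selection $\sigma$ is not merely a technical obstacle --- it does not exist, and the paper's proof sidesteps it entirely.} You correctly flag that A3 gives only trivial shape of the predecessor sets, which rules out a Michael-type selection. But your proposed fix (``replace the trivial-shape predecessor sets by genuinely contractible polyhedral choices up to homotopy'') is left as a hope, and this is exactly where the bulk of the paper's work lies. The paper never constructs a single-valued section $\sigma\colon\upset U\to U$. Instead, given $g\colon(B^n,\partial B^n)\to(\upset\overline U,\overline U)$, it builds a \emph{pair} of maps $(h,h')$ as a single map into the compact polyhedron $Q=(\overline U\times\overline U')\cap P$ inside the order relation, so that $h'$ ``covers'' $h$ by construction; then Proposition \ref{cover} (your homotopy $H(y,t)=y$ for $t<1$, $H(y,1)=h$) finishes. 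The construction of the map into $Q$ is where the real machinery enters: Hu's partial-realization theorem (Theorem \ref{hu2}) to extend a $0$-skeleton approximation near $C=g^{-1}(\overline U)$, a delicate Lebesgue-number/triangulation argument to guarantee continuity at the frontier, and Lacher's cell-like approximate-lifting theorem (Theorem \ref{lacher}) --- applicable because A3 makes the projection $Q\to\overline U'$ cell-like --- to extend away from $C$. This is precisely what makes trivial shape (rather than contractibility) the right hypothesis. Your sketch has the right flavor (PL approximation, polyhedrality, one sphere at a time) but does not contain the idea of lifting through the order relation, which is what actually replaces the nonexistent $\sigma$.
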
 
\vskip 5pt
The discrete case of Theorem \ref{main} is due to McCord \cite{Mc2}. See Section
\ref{discrete} for a discussion.
\vskip 5pt
\begin{example} The Grassmann Posets $\mathcal{G}_{n}({\mathbb K})$ are
geometric, so
Theorem \ref{main} and Vassiliev's results give us the singular homology of ${\mathcal
G}_{n}(\mathbb K)$ with the Up topology.
\end{example}
\subsection{Known theorem on weak homotopy equivalences}
Our proof of Theorem \ref{main} uses the following theorem of McCord \cite{Mc1},
reproved independently\footnote{May's paper appears to date from the late 1970's though
the publication year is 2006.} by May \cite{M}.

\begin{thm}\label{mccord} Let $g \colon C\to D$ be a map between topological spaces,
and let
${\mathcal B}$ be a basis for the open subsets of $D$. If for all $B\in {\mathcal B}$ the
restriction $g| \colon g^{-1}(B)\to B$ is a weak homotopy equivalence, then $g$ is a
weak homotopy equivalence.  
\end{thm}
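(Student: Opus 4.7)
The plan is to show that $g$ induces a bijection on $\pi_0$ and isomorphisms on all $\pi_n$ at every basepoint. Given a representative $\alpha \colon S^n \to D$ of an element of $\pi_n(D, d_0)$ (or a nullhomotopy $D^{n+1} \to D$), compactness of the image implies that $\alpha$ factors through a finite union $W = B_1 \cup \cdots \cup B_k$ of basis elements, and likewise any homotopy between such maps factors through some such $W$. It therefore suffices to prove the following hereditary strengthening: for every open $W \subseteq D$ that is a finite union of elements of $\mathcal{B}$, the restriction $g \colon g^{-1}(W) \to W$ is a weak homotopy equivalence.

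The main technical ingredient is a gluing lemma: if $U, V \subseteq D$ are open and the three restrictions of $g$ over $U$, $V$, and $U \cap V$ are all weak homotopy equivalences, then so is the restriction over $U \cup V$. I would prove this by presenting $U \cup V$ and $g^{-1}(U \cup V)$ as honest homotopy pushouts along the open inclusions and invoking the standard fact that a pointwise weak equivalence of pushout diagrams descends to a weak equivalence on pushouts; equivalently, one can argue directly via Mayer--Vietoris for higher homotopy and van Kampen for $\pi_0$ and $\pi_1$, with care for basepoints and path components.

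With the gluing lemma in hand, the hereditary strengthening follows by induction on $k$. The key observation is that the hypothesis of the theorem is inherited by every open subset $W \subseteq D$, using the basis $\{B \in \mathcal{B} : B \subseteq W\}$, which is automatically a basis for $W$ and on which the weak-equivalence hypothesis persists. The induction step applies the gluing lemma to $U = B_1 \cup \cdots \cup B_{k-1}$ and $V = B_k$, reducing to the conclusion over $U \cap V = \bigcup_{i<k}(B_i \cap B_k)$; each $B_i \cap B_k$ is open and inherits the hypothesis, so a nested induction (organized, say, using the combinatorics of the nerve of the chosen cover) closes the loop. The main obstacle is making this bookkeeping clean and seeing that it terminates, but no new geometric input is needed beyond the gluing lemma. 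Surjectivity and injectivity of $\pi_n(g)$ then follow from the first paragraph's reduction: lift $\alpha$ through the weak equivalence $g \colon g^{-1}(W) \to W$, and apply the same argument to any homotopy between images of two lifts to see that the lifts are themselves homotopic.
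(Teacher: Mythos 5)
The paper does not give its own proof of this theorem; it cites McCord \cite{Mc1} (his Theorem 6) and May \cite{M}, both of whom proceed through the Dold--Thom theory of quasi-fibrations: one passes to a fibration replacement (or homotopy fibers) and invokes a local-to-global criterion for quasi-fibrations over a basis-like cover. That mechanism is quite different from the gluing-plus-induction strategy you propose.

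Two of your three ingredients are fine. The reduction to finite unions of basis elements via compactness of spheres, disks, and homotopies is correct, and the two-set gluing lemma for weak equivalences over an open cover is a genuine theorem, provable via excision/Lebesgue subdivision or via homotopy pushouts of the two pullback squares. The gap is in the induction on $k$. To show $g$ is a weak equivalence over $W=B_1\cup\cdots\cup B_k$ you apply the gluing lemma to $U=B_1\cup\cdots\cup B_{k-1}$ and $V=B_k$, and this requires knowing that $g$ is a weak equivalence over $U\cap V=\bigcup_{i<k}(B_i\cap B_k)$. But a basis is not closed under finite intersections: $B_i\cap B_k$ is merely an open set, and writing it as a union of members of $\mathcal{B}$ contained in it generally produces infinitely many of them. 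The compactness reduction applied to $U\cap V$ then yields finite unions of basis elements inside $U\cap V$ of arbitrary size, with no bound in terms of $k-1$, so there is no decreasing quantity and the induction is circular. The allusion to ``the combinatorics of the nerve of the chosen cover'' does not repair this: the \v{C}ech-nerve argument requires the weak-equivalence hypothesis over all finite intersections $B_{i_1}\cap\cdots\cap B_{i_m}$, and that is precisely what you do not have. Your argument does work verbatim under the strictly stronger hypothesis that $\mathcal{B}$ is closed under finite intersections; for a genuine basis, one needs the finer local-to-global mechanism supplied by the Dold--Thom/quasi-fibration theory (or some other device, such as a careful simplex-by-simplex lifting argument that controls compatibilities across faces).
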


We will be prove that for $U\in {\mathcal U}_r$ both $\upset
U$ and $f^{-1}(\upset U)$ are weakly contractible\footnote{i.e. all
homotopy groups are trivial.}, so that we can apply Theorem \ref{mccord}.

\vskip 5pt
\section{Matroid Examples}
\subsection{Introduction}
In this section we describe some examples of geometric $M$-posets and applications of
Theorem \ref{main}. These lead up to a discussion of what is often called the
MacPhersonian Conjecture. To present these examples we draw on the work of Baker and
Bowler \cite{BB1}, but we try to include enough basic information that the section is
more
or less self-contained.
\vskip 5pt

\subsection{The tropical phase hyperfield}
Let $T\Phi$ denote the poset $\{0\} \cup S^{1} \subseteq \mathbb{C}$, where the partial
ordering is fully defined by $0<x$ for all $x\in S^1$. This is an $M$-poset over
$\{1,2\}$, where
the mirror $\mu$ takes $0$ to $1$ and all of $S^1$ to $2$. The Openness Property holds
and
this is easily seen to be a geometric poset.
\vskip 5pt 
$T\Phi $ also carries the structure of a hyperfield. A {\it hyperfield} is similar in
definition to a
field (associative, commutative etc.) except that the addition operation is allowed to be
multivalued. In the case of $T\Phi$, multiplication is inherited from the complex plane
and the
sum $\boxplus$ is defined by
\begin{itemize}
\item $x \boxplus 0=\{x\}$,
\item $x \boxplus -x=\{0\} \cup S^1$ whenever $x \neq 0$,
\item $x \boxplus y$ is the smallest closed arc in $S^1$ joining the points $x$ and $y$
when $y \neq -x$,
\item For subsets $A$ and $B$, $A\boxplus B=\bigcup _{a\in A, b\in B}a\boxplus b$.
\end{itemize}
$T\Phi $ is the {\it tropical phase hyperfield} introduced by Viro in
\cite{Viro1}\footnote{Not to
be confused with the {\it phase hyperfield} where $x\boxplus -x=\{0,x,-x\}$
and ``closed arc" is replaced by ``open arc".}. It is a topological hyperfield in the sense of
\cite{AD}.
\vskip 5pt
\begin{rem} The hyperfield $T\Phi$ is the complex analog of the ``sign
hyperfield'' ${\mathbb S}\coloneqq T\Phi \bigcap {\mathbb R}$. Just as $T\Phi$
will lead us to a discussion of tropical phased matroids, $\mathbb S$
would lead to an analogous discussion of oriented matroids. However,
unlike $T\Phi$, the original topology on $\mathbb S$ is discrete, so the
analog of the topological posets discussed here would be discrete posets, and everything 
would be much simpler.
\end{rem}
\subsection{Various relevant $M$-posets}

\subsubsection{Products} The product space $T\Phi ^{n}$ (using the
Original topology) is a topological poset, where the comparison $\leq$
is defined coordinate-wise, i.e. $(x_{1},\dots , x_{n})\leq (y_{1},\dots
,y_{n})$ if and only if $x_{i}\leq y_{i}$ for all $i$. This has a
 minimal element $\mathbf 0$. More interesting are products
with zero deleted. Consider $T\Phi
 ^{n}-\{\mathbf 0\}$. The mirror
map $\mu :T\Phi ^{n}-\{\mathbf 0\}\to [n]$ is defined
 by\footnote{In
the context of product posets, the {\it support}
of an $n$-tuple is the set of its non-zero entries.}
$\mu(\mathbf{v})=|\text{support}(\mathbf{v})|$. This $M$-poset is polyhedral because the pre-image
of each $i$ is homeomorphic to a disjoint union of tori.
 That it has the Polyhedral Diagonal Property
is then clear in view of the following remark.

\vskip 5pt
\begin{Rem}\label{polyprop} One usually checks the Polyhedral Diagonal Property as
follows: There is a known polyhedral $M$-poset $\mathcal X$ with (polyhedral) order
relation ${\mathcal P}$, and one is interested in a subset $\mathcal Y$ such that  each
$Y_i$ is a polyhedral subset of $X_i$. Then the order relation for $\mathcal Y$ is the
polyhedron 
${\mathcal P}\bigcap ({\mathcal Y}\times {\mathcal Y})$, so $\mathcal Y$ is a
polyhedral
$M$-poset.
\end{Rem}
\vskip 5pt
\begin{prop}\label{product}
The polyhedral $M$-poset $T\Phi ^{n}-\{\mathbf 0\}$ is geometric.
\end{prop}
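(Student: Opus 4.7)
The plan is to verify the three Additional Properties A1, A2, A3 directly for this $M$-poset by exploiting its explicit stratified structure. For each $r$-subset $I\subseteq [n]$, the elements of $T\Phi^{n}-\{\mathbf{0}\}$ with support exactly $I$ form a torus $(S^{1})^{I}$, so $X_{r}$ is the disjoint union of $\binom{n}{r}$ such tori. For $\mathbf{v}\in X_{r}$ with $\text{supp}(\mathbf{v})=I$, the coordinatewise definition of $<$ forces $\upset \mathbf{v}$ to consist of exactly those $\mathbf{w}$ whose support contains $I$ and whose $I$-coordinates coincide with those of $\mathbf{v}$. Hence
\[
\mathbf{v}^{(s)} \;=\; \bigsqcup_{J\supseteq I,\,|J|=s} T_{J}(\mathbf{v}),\qquad T_{J}(\mathbf{v})\coloneqq \{\mathbf{w}\in (S^{1})^{J}\mid w_{i}=v_{i}\text{ for all }i\in I\},
\]
a disjoint union of \emph{fiber tori}, each homeomorphic to $(S^{1})^{J\setminus I}$.

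A1 is then immediate: $\mathbf{v}^{(s)}$ is a finite disjoint union of tori, hence compact, and non-empty whenever $s\leq n$. For A2, since $X_{r}$ is the topological sum of its clopen components $(S^{1})^{I}$, it suffices to check continuity of $\pi_{r,s}$ on each such component. The individual fiber map $\mathbf{v}\mapsto T_{J}(\mathbf{v})$ is Hausdorff-continuous because a small change in $\mathbf{v}$ merely translates $T_{J}(\mathbf{v})$ by the same amount along the $I$-coordinates; and a finite union of Hausdorff-continuous compact-set-valued maps whose images land in pairwise disjoint components of $X_{s}$ is again Hausdorff-continuous.

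The step one might expect to be the main obstacle is A3, but here it collapses to a one-line verification. Since each basis neighborhood $U\in \mathcal{U}_{r}$ is a PL cone neighborhood of a point of $X_{r}$, $\overline{U}$ lies in a single torus $(S^{1})^{I_{0}}$, and every element of $\overline{U}$ has support exactly $I_{0}$. Given $\mathbf{y}\in \overline{U}^{(s)}$, any $\mathbf{x}\in \overline{U}$ with $\mathbf{x}<\mathbf{y}$ must satisfy $x_{i}=y_{i}$ for all $i\in I_{0}$, so $\mathbf{x}$ is uniquely determined as the projection of $\mathbf{y}$ onto its $I_{0}$-coordinates. By definition of $\overline{U}^{(s)}$ this projection actually lies in $\overline{U}$, so $\{\mathbf{x}\in \overline{U}\mid \mathbf{x}<\mathbf{y}\}$ is a singleton and therefore has trivial shape. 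This establishes A3 and completes the verification that $T\Phi^{n}-\{\mathbf{0}\}$ is geometric.
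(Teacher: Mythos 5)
Your proof is correct and follows the same approach as the paper: both verify A3 by observing that $\{\mathbf{x}\in\overline{U}\mid\mathbf{x}<\mathbf{y}\}$ is a singleton, the projection of $\mathbf{y}$ onto the $I_0$-coordinates. The paper dismisses A1 and A2 as ``clear,'' whereas you spell them out via the fiber-torus decomposition, and you also supply the small check (omitted in the paper) that the projected point actually lies in $\overline{U}$, so the relevant set is genuinely a singleton rather than possibly empty.
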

\begin{proof}
The Additional Properties A1 and A2 are clear. Recall that Property A3
refers to our convenient choice of basis.  Write ${\mathcal X}\coloneqq\Phi
^{n}-\{\mathbf 0\}$. Let $r<s$, let $U$ be a basic open set in $X_r$,
and let ${\mathbf y}\in {\overline U}^{(s)}$. There is a sequence $\mathbf y_{m}$
in $ U^{(s)}$ converging to $\mathbf y$, so (renaming points as necessary) there is a
corresponding
sequence ${\mathbf x_{m}}$ in $U$ converging to some
${\mathbf x}\in \overline U$
where, for each $m$, ${\mathbf x_{m}}<{\mathbf y_{m}}$. Since
the order relation $\mathcal P$ is closed, and contains each pair 
$({\mathbf x_{m}}, {\mathbf y_{m}})$ it must also contain 
$({\mathbf x}, {\mathbf y})$. Thus 
$\{{\mathbf x}\in {\overline U}\mid {\mathbf x}<{\mathbf y}\}$ is non-empty. There
are only
finitely many points ${\mathbf z}\in X_r$ satisfying ${\mathbf z}<{\mathbf y}$,
each being in a different component of $X_r$, so $\mathbf x$ is the only one  
in the component of $X_r$ containing $\overline U$. So $\{{\mathbf z}\in {\overline
U}\mid
{\mathbf z}<{\mathbf y}\}$ is a single point.
\end{proof} 
\vskip 5pt
To prove polyhedrality in our other $M$-posets we will use the following well-known
theorem:
\begin{thm}\label{semi}(\cite{Hir}) Let $S\subseteq {\mathbb R}^n$ be a semialgebraic
set (i.e.
the solution set of a finite number of polynomial equalities and polynomial inequalities, or
a
finite union of such). Then $S$ admits a (canonical) PL triangulation.
\end{thm}
\subsubsection{Covectors}
For an $n$-vector $\mathbf{v}=(v_1,\ldots,v_n) \in T\Phi^{n}-\{\mathbf 0\}$, its {\it
covector} is   
$$\mathbf{v}^\perp\coloneq\{(x_1,\ldots,x_n) \in T\Phi^{n}-\{\mathbf 0\}\mid 0 \in
v_{1}x_1
\boxplus \ldots \boxplus v_{n}x_n\}.$$
\noindent
The set ${\mathbf v}^{\perp}$ is a subset of the $M$-poset $T\Phi ^{n}-\{\mathbf 0\}$,
so it is also an $M$-poset, provided the mirror map is taken to be $\mu :{\mathbf v}^{\perp}\to
\{2, 3, \dots, n\}$ (because, with this restriction, $\mu ^{-1}(1)$ is empty). 
\vskip 5pt
\begin{thm}\label{perp} The $M$-poset ${\mathbf v}^{\perp}$ is geometric.
\end{thm}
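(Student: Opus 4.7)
The plan is to verify the Polyhedral Diagonal Property and Additional Properties A1, A2, A3 for $\mathcal{Y} := \{\mathbf{v}_1, \ldots, \mathbf{v}_m\}^\perp$, exploiting the fact (Proposition \ref{product}) that the ambient $M$-poset $T\Phi^n - \{\mathbf{0}\}$ is geometric. Write $Y_r := \mathcal{Y} \cap X_r$, and recall that $X_r$ is a finite disjoint union of tori $X_{r,I} \cong (S^1)^I$ indexed by the $r$-subsets $I \subseteq [n]$.

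The step I expect to be the main obstacle is verifying that $\mathcal{Y}$ is polyhedral. By Remark \ref{polyprop} it suffices to show that each $Y_r$ is a closed polyhedral subset of $X_r$. For a single $\mathbf{v}$ and $\mathbf{x} \in X_{r,I}$, unpacking the definition of $\boxplus$ in $T\Phi$ shows that $\mathbf{x} \in \mathbf{v}^\perp$ is equivalent to the assertion that the convex hull of $\{v_j x_j : j \in I,\, v_j \neq 0\} \subseteq \mathbb{C}$ contains $0$; geometrically this says that the largest cyclic gap between consecutive angles $\arg(v_j x_j)$ on $S^1$ is at most $\pi$, which is a closed PL condition on the angular coordinates of the torus. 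Intersecting over all $\mathbf{v}_i$ preserves the property, so $Y_r$ is closed polyhedral in $X_r$ and Remark \ref{polyprop} yields the Polyhedral Diagonal Property.

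Next I would record a key monotonicity fact: if $\mathbf{x} \in Y_r$ and $\mathbf{y} > \mathbf{x}$ in the ambient poset, then $\mathbf{y} \in \mathcal{Y}$. Passing from $\mathbf{x}$ to $\mathbf{y}$ merely appends additional nonzero entries $v_{i,j} y_j$ to each hyper-sum, and enlarging a finite subset of $\mathbb{C}$ cannot remove $0$ from its convex hull. Hence for each $r<s$ the set $\mathbf{x}^{(s)}$ computed in $\mathcal{Y}$ coincides with $\mathbf{x}^{(s)}$ computed in the ambient poset, and properties A1 and A2 for $\mathcal{Y}$ follow immediately from the corresponding properties of $T\Phi^n - \{\mathbf{0}\}$ established in Proposition \ref{product}.

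For A3, I would choose the basis $\mathcal{U}_r$ of $Y_r$ so that every $U$ lies in a single torus component $Y_{r,I}$; this is possible because each $Y_{r,I}$ is open in $Y_r$. Then, as in the proof of Proposition \ref{product}, for any $\mathbf{y} \in \overline{U}^{(s)}$ every $\mathbf{x} \in \overline{U}$ with $\mathbf{x} < \mathbf{y}$ must satisfy $\text{supp}(\mathbf{x}) = I$ and $x_j = y_j$ for $j \in I$, so $\mathbf{x}$ is uniquely the truncation of $\mathbf{y}$ to $I$. Consequently the set $\{\mathbf{x} \in \overline{U} : \mathbf{x} < \mathbf{y}\}$ is a singleton, which has trivial shape, completing the verification.
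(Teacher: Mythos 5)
Your polyhedrality argument is essentially sound and matches the geometry underlying the paper's proof, though you state it more directly (per torus component, via the cyclic-gap picture) while the paper runs an induction on $n$. One small caveat: the equivalence ``$0 \in \boxplus_j v_j x_j$ iff $0 \in \mathrm{conv}\{v_j x_j : v_j x_j \neq 0\}$'' breaks down when the set $\{v_j x_j : j\in I,\, v_j\neq 0\}$ is \emph{empty}: the hypersum is then $\{0\}$ (so $\mathbf{x}\in\mathbf{v}^\perp$), while $\mathrm{conv}(\emptyset)=\emptyset$. On such a torus $X_{r,I}$ (where $v_j=0$ for all $j\in I$) the whole torus lies in $\mathbf{v}^\perp$; still polyhedral, but it needs to be said. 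The paper handles this via its induction step (``otherwise the claim would be covered by the induction hypothesis''). Your A3 argument (singleton truncation) agrees with the paper's.

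The serious problem is the monotonicity/upward-closure claim you use for A1 and A2, which is \emph{false}. Take $n=3$, $\mathbf{v}=(1,-1,0)$, $\mathbf{x}=(0,0,1)$. Then $v_1x_1\boxplus v_2x_2\boxplus v_3x_3 = 0\boxplus 0\boxplus 0=\{0\}$, so $\mathbf{x}\in\mathbf{v}^\perp$. But $\mathbf{y}=(1,0,1)>\mathbf{x}$ gives $1\boxplus 0\boxplus 0=\{1\}\not\ni 0$, so $\mathbf{y}\notin\mathbf{v}^\perp$. The failure happens exactly at the boundary case above: passing from an empty effective set to a nonempty one does \emph{not} preserve membership of $0$, even though ``enlarging a nonempty finite set preserves $0\in\mathrm{conv}$'' is true. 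Since your derivation of both A1 (nonemptiness of $\mathbf{x}^{(s)}$ inside $\mathcal{Y}$) and A2 (that $\pi_{r,s}$ restricted to $Y_r$ lands in $cY_s$ and is continuous) rests entirely on this upward closure, those verifications are unproved. In fact, in this very example, $Y_1=\{(0,0,c)\}$ and $Y_2=\{(a,a,0)\}$ are both nonempty but no element of $Y_1$ lies below any element of $Y_2$, so $\mathbf{x}^{(2)}_{\mathcal{Y}}=\emptyset$ and A1 fails for the totally ordered $\mathcal{R}=[3]$ the paper inherits; you have actually uncovered a point that the paper's terse ``A1--A3 hold for the same reasons as in the proof of Proposition~\ref{product}'' does not address, and that apparently requires either an additional hypothesis on the $\mathbf{v}_i$ (e.g., full support) or a reindexing of $\mathcal{R}$ by a non-total order.
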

\begin{proof}
We first prove polyhedrality. Because $(S^1)^n$ acts transitively
it is enough to consider the case $\bf{v=1}$, i.e. to show that the set
$S:=\{(x_{1},\ldots ,x_{n})\mid 0\in x_1\boxplus \cdots \boxplus x_n\}$
is polyhedral. By induction we may assume the points $x_{1},\ldots , x_{n}$
lie in $S^1$ and are distinct, since otherwise, the case would have been dealt with for a 
value lower than $n$. As we have said, the case $n=1$ does not occur. When $n=2$ the
set $S$ is a copy of $S^1$ and hence is polyhedral. When $n=3$ a point $(x_{1}, x_{2},
x_{3})$
lies in $S$ if and only if the closed convex hull in $\C$ having these
points as vertices contains $0$. To use Theorem \ref{semi}, we regard the points $x_i$ as
vectors in the plane $\C$. For each $i$,
let $y_{i}$ be a unit vector orthogonal to $x_i$. The condition then
is that any two of the following hold (the third being a consequence of
	the other two)\footnote{Geometrically this says that if we denote by $d_i$ the diameter 
$[x_{i},-x_{i}]$ in the unit disk, then $x_j$ and $x_k$ must be on opposite sides of
$d_i$,
and this must hold for all three choices of subscripts, though if two hold then so does the
third.)}:
\begin{enumerate}
\item $(y_{1} \cdot x_{2})( y_{1} \cdot x_{3})\leq 0$;
\item $(y_{2} \cdot x_{1})( y_{2} \cdot x_{3})\leq 0$;
\item $(y_{3} \cdot x_{1})( y_{3} \cdot x_{2})\leq 0$.
\end{enumerate}
When $n>3$ the condition must hold for some triple of points, so polyhedrality is proved
for all
$n$. The Additional Properties A1 and A2 clearly hold, and A3 holds for the same
reasons as in
the proof of Proposition \ref{product}.  
\end{proof}
\vskip 5pt
\begin{rem}In \cite{Al21}, the first-named author proves that the order
complex $\Delta({\mathbf v}^\perp)$ is homeomorphic to the sphere $S^{2n-3}$.
Together with
Theorems \ref{main} and \ref{perp}, this implies that the singular
homology of ${\bf {v}}^\perp $ with the Up topology is that of
$S^{2n-3}$.
\end{rem}
\vskip 5pt
\subsection{Strong matroids over hyperfields}
Other than fields, the only hyperfield occurring in this paper is $T\Phi$. The general
definition of a hyperfield can be found, for example, in \cite{BB1} or \cite{BB2}. To
simplify notation in this subsection it is convenient to state things
in terms of an arbitrary hyperfield $F$, keeping in mind that, for us,
$F$ will  always be $\C$ or $T\Phi$.
\vskip 5pt
Given positive integers $n$ and $r$ and a hyperfield $F$, a {\it strong
Grassmann-Pl{\"u}cker function of rank} $r$ on $[n]$ with coefficients in $F$ is a
function
$\varphi :[n]^{r}\to F$ such that 
\begin{enumerate}[(1)]
\item $\varphi $ is not identically zero;
\item $\varphi $ is alternating, i.e. $\varphi (x_{1},\dots ,{x}_{i},\dots ,x_{j},\dots
 ,x_{r})=-\varphi (x_{1},\dots ,{x}_{j},\dots, x_{i},\dots ,x_{r})$; 
\item $\varphi $ satisfies the strong Grassman-Pl{\"u}cker relations: When
$\{x_1,\dots,x_{r+1}\}\subseteq [n]$ and $\{y_1,\dots ,y_{r-1}\}\subseteq [n]$ then 
$$0\in \boxplus _{k=1}^{r+1}(-1)^{k}\varphi (x_{1},\dots ,{\hat x}_{k},\dots
,x_{r+1}). 
\varphi (x_{k},y_{1},\dots ,y_{r-1}).$$
\end{enumerate}
\vskip 5pt
Two such functions $\varphi _1$ and $\varphi _2$ are {\it equivalent} if
$\varphi _1=t.\varphi _2$ where $t\in F-\{0\}$, i.e. one is a non-zero scalar multiple
of the other.
\vskip 5pt
Strong $F$-matroids are defined in \cite{BB1}. For our purposes we will instead use the
following Theorem 3.13 of \cite{BB1} as our {\it definition} of ``strong $F$-matroid".
\vskip 5pt 
\begin{thm}\label{strong} There is a natural bijection between the set of equivalence
classes of
strong  Grassman-Pl{\"u}cker functions of rank $r$ on $[n]$ with coefficients in $F$ and
strong 
$F$-matroids of rank $r$ on $[n]$.
\end{thm}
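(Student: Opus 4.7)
The plan is to exhibit inverse constructions between the two sides and then verify that each side's axioms translate into the other's. The forward map sends a Grassmann-Pl\"ucker function $\varphi$ to its ``combinatorial shadow'' (bases, circuits, covectors), while the reverse map reconstructs $\varphi$ up to scalar from that combinatorial data. Since strong $F$-matroids admit several cryptomorphic descriptions, I would pick the circuit description as the most convenient pivot for matching things up with $\varphi$.

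For the forward direction, given $\varphi$ I would first define the underlying matroid by declaring an $r$-subset $\{x_{1},\dots,x_{r}\}\subseteq E$ to be a basis precisely when $\varphi(x_{1},\dots,x_{r})\neq 0$; the alternating property makes this well-defined, and the classical basis exchange axiom is extracted from the GP relation by choosing $\{x_{1},\dots,x_{r+1}\}$ to contain a candidate exchange and $\{y_{1},\dots,y_{r-1}\}$ to be the complement of the element being exchanged, then using that $0\in \boxplus$ forces at least two nonzero summands. Next, for a basis $B$ and $e\notin B$, I would define the fundamental $F$-circuit supported on $B\cup\{e\}$ by the cofactor-type formula $C(b) = \pm\,\varphi(B\setminus\{b\}\cup\{e\})/\varphi(B)$ and $C(e)=\pm 1$. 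The strong circuit elimination axiom (phrased in the hyperfield, i.e., $0\in\boxplus$ of two circuits witnessing a common element and modulo sign) then drops out of a GP relation with an appropriate choice of $\{x_{1},\dots,x_{r+1}\}$ and $\{y_{1},\dots,y_{r-1}\}$. Scalar equivalence $\varphi_{1} = t\cdot\varphi_{2}$ clearly produces the same circuits, so the forward map factors through equivalence classes.

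For the reverse direction, I would start from a strong $F$-matroid, choose a basis $B_{0}=\{b_{1},\dots,b_{r}\}$, normalize $\varphi(B_{0})=1$, and then define $\varphi$ on any other basis $B$ by composing a chain of fundamental-circuit ratios joining $B_{0}$ to $B$ through single-element swaps, extending by zero on non-bases. The alternating property is forced by the sign conventions on fundamental circuits. To establish the GP relations I would expand the left side using the circuit relations of the matroid and match terms with the hyperfield's version of the Plücker relation; because these are exactly the relations guaranteed by the strong circuit axiom, they go through. Finally, I would check that the two maps are mutually inverse: starting from $\varphi$, extracting circuits, and then rebuilding $\varphi$ from those circuits returns the original function because the cofactor formula is recovered by the very chain of exchanges used in the reconstruction.

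The hard part will be well-definedness of the reverse map. Over a field this is essentially the fact that the determinant is a well-defined alternating multilinear form, but in the hyperfield setting each circuit value is only specified up to the multivalued sum $\boxplus$, and different chains of basis exchanges from $B_{0}$ to $B$ could a priori yield different values. Proving independence of the chain amounts to showing that any two such chains are connected by elementary ``square'' relations (double swaps and rotations through a common set) and that the strong GP/circuit relations make the resulting ratios equal, not merely overlap in some hyperfield sum. This is where the strength of the hypothesis --- strong rather than merely weak matroid --- must be used in full force, and it is the step where the proof stops being a routine translation of the classical argument.
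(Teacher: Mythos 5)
This theorem is not proved in the paper at all: it is quoted verbatim as Theorem~3.13 of Baker--Bowler \cite{BB1} and used as a black box (indeed, the surrounding text even says it is being adopted as the \emph{definition} of the matroid side of the correspondence). So there is no paper proof to compare your attempt against, and a proof was not expected here.

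That said, your sketch has the right general shape for the Baker--Bowler result, and you correctly identify the genuine difficulty: well-definedness of the reconstruction of $\varphi$ from circuit data, since in a hyperfield the intermediate quantities are constrained only by membership in multivalued sums, and different chains of basis exchanges need not agree a priori. Two remarks on where your outline diverges from how this is actually handled. First, Baker--Bowler do not reconstruct $\varphi$ by chasing chains of single-element basis swaps from a reference basis $B_{0}$; they work through the \emph{dual pair} axiomatization of strong $F$-matroids (circuits together with cocircuits satisfying an orthogonality condition), which packages exactly the coherence you would otherwise have to prove chain-by-chain. Trying to establish chain-independence directly via ``square'' moves is the classical field-theoretic route, and over a general hyperfield it is not clear that elementary moves suffice without essentially re-deriving the dual-pair machinery. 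Second, your forward direction is fine in outline, but the statement that ``$0\in\boxplus$ forces at least two nonzero summands'' needs care: in $T\Phi$, for instance, a single summand of the form $x\boxplus(-x)$ already contains $0$, so basis exchange has to be extracted from the GP relation a little more delicately than by a bare counting argument. Since the paper simply cites \cite{BB1}, the cleanest fix is to do the same and not attempt a standalone proof.
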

A parallel discussion is possible for ``weak Grassman-Pl{\"u}cker functions"; see
\cite{BB1}
and \cite{BB2} for the definitions. 
\subsection {The space of strong Grassmann-Pl{\"u}cker functions}
A map $\varphi :[n]^{r}\to T\Phi $ assigns an element 
$\varphi (x_{1},\ldots ,x_{r})\in T\Phi$ to
each $r$-tuple $(x_{1},\ldots ,x_{r})$ of elements of $[n]$. When we order the
$r$-tuples
lexicographically, $\varphi $ is encoded by an element of $T\Phi ^m$ where 
$m=\frac{n!}{(n-r)!}$. We write $\text{GP}_{s}(r, T\Phi ^{n})$ for the space of all
strong
Grassmann-Pl{\"u}cker functions of rank $r$ on the set $[n]$ with coefficients in
$T\Phi$. This
is the subset of $T\Phi ^{m}$ consisting of elements satisfying the non-zero and
alternating
conditions as well as (3) in the definition above.
\begin{prop}\label{GP}
The $M$-poset $\text{\rm GP}_{s}(r, T\Phi ^{n})$ is semialgebraic.
\end{prop}
\begin{proof}
We prove this using Theorem \ref{semi}. The first two conditions are clearly
semialgebraic. For fixed $\{x_1,\dots,x_{r+1}\}\subseteq [n]$ and
$\{y_1,\dots,y_{r-1}\}\subseteq [n]$ we consider the condition  
$$0\in \boxplus _{k=1}^{r+1}(-1)^{k}\varphi (x_{1},\dots ,{\hat x}_{k},\dots
,x_{r+1}). 
\varphi (x_{k},y_{1},\dots ,y_{r-1})$$
for all $\varphi $ mapping $[n]^{r}$ into $S^1$. The set of such $\varphi $ is
semialgebraic for
the reasons\footnote{Note that the passage from $u_{k}:=\varphi (x_{1},\dots ,{\hat
x}_{k},\dots , x_{r+1})$ and $v_{k}:=\varphi (x_{k}, y_{1},\dots , y_{r-1})$ to 
$u_{k}v_{k}$ is semialgebraic.} given in the proof of Theorem \ref{perp}. As 
$\{x_1,\dots,x_{r+1}\}\subseteq [n]$ and $\{y_1,\dots,y_{r-1}\}\subseteq [n]$ vary this
gives a
finite union of semialgebraic sets. This discussion can easily be adapted to the cases
where some
of the $\varphi $-values are $0\in T\Phi $.
\end{proof} 
\subsection{Strong tropical phased Grassmannians}\label{circuit}
The {\it strong Grassmannian}, $\text{Gr}_{s}(r,T\Phi ^{n})$, is defined in the
literature\footnote{See \cite{AD} for more details.} to be the set of all strong
tropical phased matroids of rank $r$ on the set $[n]$ with
coefficients in $T\Phi $. In other words, 
$\text{Gr}_{s}(r,T\Phi ^{n})$ is the quotient space $\text{GP}_{s}(r, T\Phi
^{n})/S^{1}$.
\begin{thm}\label{Grassmannian} The $M$-poset $\text{Gr}_{s}(r,T\Phi ^{n})$ is
geometric.
\end{thm}
\begin{proof}
We first prove that $\text{\rm Gr}_{s}(r,T\Phi ^{n})$ is semialgebraic
hence polyhedral. 
\vskip 5pt
Once an ordering is chosen for the set $[n]^{r}$, a point of
$\text{GP}_{s}(r, T\Phi ^{n})$ is an ordered set of $r$-tuples. In this
case it is more convenient to forget the $r$-tuple partition, and to
consider that point as just an element of $T\Phi ^{m}\subseteq {\C}^m$
where $m=\frac{n!}{(n-r)!}$. By Proposition \ref{GP} $\text{GP}_{s}(r, T\Phi
^{n})$ can be considered as a semialgebraic subset of ${\C}^m$.
We are to prove that the quotient of this by
$S^1$-multiplication does not destroy the semialgebraic property.
\vskip 5pt
Let $C_1$ denote the subset consisting
of all points whose last entry lies in $S^1$. This is a union of
components of a semialgebraic set, and hence is semialgebraic. For each
$p\in C_1$ there is $t\in S^1$ such that $t.p$ has last entry $1\in
S^1$. We denote by $C_{1,1}$ the subset of $C_1$ consisting of  points
whose last entry is $1$. This is easily seen to be a semialgebraic
set, hence polyhedral. It follows that $C_1$, being homeomorphic to
$S^{1}\times C_{1,1}$, is polyhedral. We consider $S^{1}\times C_{1,1}$
to be an $S^1$-space where the $S^1$-action is by rotation on the first
factor and is trivial on the second factor; and, of course, $C_1$ is an
$S^1$-space under multiplication by scalars. Writing an arbitrary element
of $C_1$ as $({\mathbf b},x)$, where $\mathbf b$ involves all the entries
except the last, and $x\in S^1$ is the last entry, an $S^1$-equivariant
homeomorphism $C_{1}\to S^{1}\times C_{1,1}$ is given by $({\mathbf
b},x)\mapsto (x,(x^{-1}{\mathbf b},1))$.
\vskip 5pt
Next, let $C_2$ denote the subset consisting of all points whose last
entry is $0$ and whose second-to-last entry lies in $S^1$. By the
same argument, $C_2$ is polyhedral, and is equivariantly homeomorphic
to $S^{1}\times C_{2,1}$. The pattern is now clear. The polyhedral
sets $C_i$ are clopen and pairwise disjoint. Since all the points
of $\text{GP}_{s}(r, T\Phi ^{n})$ have some non-zero entry we see
that $\bigcup _{i}C_{i}=\text{GP}_{s}(r, T\Phi ^{n})$ which 
is equivariantly homeomorphic to $\bigcup _{i}S^{1}\times
C_{i,1}$. Factoring out the $S^1$-actions gives a homeomorphism between
$\text{Gr}_{s}(r,T\Phi ^{n})$ and the semialgebraic set $\bigcup
_{i}C_{i,1}$. This proves polyhedrality.
\vskip 5pt
The Additional Property A3 holds for the same reason as in the proof of Proposition
\ref{product}. The ``non-empty" requirement in A1 holds because the condition $0 \in
x_{1}
\boxplus \cdots \boxplus x_{n}$ continues to hold if a $0$-entry $x_i$ is replaced by a
non-zero
entry. The proof of Property A2 is left to the reader.
\end{proof}
\vskip 5pt
The ordinary complex Grassmannian $\text{Gr}(r,{\mathbb C}^{n})$ is the
space of $r$-dimensional linear subspaces of ${\mathbb C}^{n}$; here $1\leq r\leq n-1$.
But it
can also be viewed in the above terms, since every field is a hyperfield (in which case
$a+b=\{a+b\}$). Thus $\text{Gr}(r,{\mathbb C}^{n})$ can be identified with the
quotient space,
modulo scalar multiplication, of the space of Grassmann-Pl{\"u}cker
functions\footnote{Over a
field, strong and weak are the same.} of rank $r$ on $[n]$ with coefficients in $\mathbb
C$. The
map ${\mathbb C}\to T\Phi$ sending non-zero $z$ to
$\frac{z}{|z|}$ and $0$ to $0$ is continuous with respect to the Up
topology. It induces the map $\rho _{s}:\text{Gr}(r, {\mathbb C}^{n})\to
\text{Gr}_{s}(r, T\Phi^{n})$, which is also continuous when the $M$-poset 
$\text{Gr}_{s}(r, T\Phi^{n})$ has the Up topology. 
\vskip 5pt

\subsection{The Tropical MacPhersonian Conjecture}
The following is the tropical phased analog of a well-known conjecture for oriented
matroids:
\vskip 5pt
{\bf Question:} Is the map $\rho _{s}:\text{\rm Gr}(r, {\mathbb C}^{n})\to \text{\rm
Gr}_{s}(r,
T\Phi^{n})$ a weak homotopy equivalence?
\vskip 5pt
\begin{rems}
\begin{enumerate}[(i)]
\item There is a homotopy commutative diagram 
$$
\xymatrix{
&\Delta (\text{Gr}_{s}(r,T\Phi^{n}))\ar[d]^{f}\\
\text{Gr}(r,{\mathbb C}^{n})\ar[r]_{\rho _{s}}\ar[ur]^{\Delta (\rho
_{s})}&\text{Gr}_{s}(r,T\Phi^{n})
}
$$
In view of Theorem \ref{main}, the map $\rho _{s}$ is a weak homotopy
equivalence if and only if $\Delta (\rho _{s})$ is. The traditional form
of the Question is for $\Delta (\rho _{s})$. We believe the problem will be
more tractable using the poset itself with the Up topology.
\item The analogous ``MacPhersonian Conjecture" for oriented matroids
(still open having been incorrectly dealt with in \cite{biss}) says
that the analogous ``real" map $\Delta (\rho):\text{Gr}(r, {\mathbb
R}^{n})\to \Delta (\text{Gr}(r, {\mathbb S}^{n}))$ is a weak homtopy
equivalence. Since $\text{Gr}(r,{\mathbb S}^{n}))$ is a finite
discrete $M$-poset, the relevant analog of Theorem \ref{main} was proved long
ago by McCord (see Section \ref{discrete}), and it was already pointed
out in \cite{AD} that the MacPhersonian Conjecture is the same whether worded for
$\rho $
or for $\Delta (\rho)$.
\end{enumerate}
\end{rems}

\subsection{The place of the MacPhersonian Conjecture} The Conjecture lies at the
border between topology and combinatorics. Let ${\K}$ denote either of the fields
${\R}$ or ${\C}$. The classifying space for  $r$-dimensional ${\K}$-vector bundles is 
$\text{\rm Gr}(r, {\K}^{\infty})$, the direct limit over $n$ of spaces 
$\text{\rm Gr}(r, {\K}^{n})$. This means that there is a natural bijection between the set
of isomorphism classes of  $r$-dimensional ${\K}$-vector bundles over a space $B$ and
the set of homotopy classes of maps  
from $B$ to $\text{\rm Gr}(r, {\K}^{\infty})$. The point of the MacPhersonian
Conjecture, if true, is that $\text{\rm Gr}(r, {\K}^{n})$ is faithfully modeled  by a
combinatorial object, namely $\text{\rm Gr}(r, F^{n})$, where the hyperfield $F$ is
$T\Phi $ in the complex case, or the sign hyperfield ${\mathbb S}$ in the real case,
leading,
in the limit, to a combinatorial model for the classifying space.
\vskip 5pt
The rest of this paper is concerned with the proof of Theorem \ref{main}.
\vskip 5pt

\section{Preliminaries}

\subsection{Polyhedra.}\label{alpha} This is for reference. We need to be precise about
our
polyhedral terminology.
\vskip 5pt
A {\it rectilinear simplicial complex} consists of a locally finite, finite-dimensional,
abstract
simplicial complex $K$, whose geometric realization is denoted by $|K|$. The space
$|K|$ is
embedded as a subset of a Euclidean space $E$ in such a way that each simplex is a
convex
subset of $E$. Moreover, the embedding is a closed proper\footnote{A map is {\it
proper} if the
pre-image of every compact subset of its codomain is compact.} map $|K|\to E$. The
choice of
$E$ and of the embedding will be suppressed, and from now on we will denote the image
of
$|K|$ simply by $K$, omitting the vertical bars. 
\vskip 5pt
The Euclidean space $E$ should carry either the $\ell _1$ metric or the $\ell _{\infty}$
metric; this is to ensure that metric balls in $K$ will be polyhedral. The ``length metric''
on $K$ is the metric which measures the distance between points $p$ and $q$ as the
infimum of lengths of piecewise linear paths joining $p$ to $q$, where each linear part of
the path lies in, and is measured in, a simplex of $K$. The corresponding metric topology
on $K$ is the same as both the weak topology and the topology inherited from the
Euclidean space $E$.
\vskip 5pt
A locally compact space $A$ is a {\it polyhedron} if it is equipped
with a piecewise linear (PL) structure. This means that the space $A$
has been identified, via a homeomorphism chosen once and for all,
with a rectilinear simplicial complex $K$. The PL {\it structure}
defined by $A$ consists of all rectilinear simplicial complexes $L$ occupying the same
space as $K$, such that $L$ and $K$ have a common rectilinear simplicial
subdivision\footnote{References for polyhedra are \cite{RS}, \cite{Hd}
and Section 3.1 of \cite{Sp}. Here, we use nothing of that subject
beyond basic definitions and elementary properties.}. We say that any
such complex $L$ {\it triangulates} $A$ or {\it is a triangulation of} $A$.
\vskip 5pt
A {\it subpolyhedron} of $A$ is a closed subset $B$ such that some triangulation of $A$ 
has a subcomplex which triangulates $B$. We will often describe a space $A$ as
{\it polyhedral} when we mean that $A$ is a polyhedron in the above sense; and when
$B$ is a
subpolyhedron of $A$ we may simply say that $B$ is ``polyhedral''.
\vskip 5pt
As an extension of this, we may consider an open subset $V$ of the
polyhedron $A$. It is well-known that $V$ has a polyhedral structure
{\it compatible with} that of $A$; this means: $V$ can be triangulated
in such a way that each finite subcomplex is a subpolyhedron of $A$,
and the diameters of simplexes approach zero as the simplexes approach
the frontier of $V$. 

\includegraphics[width=9cm]{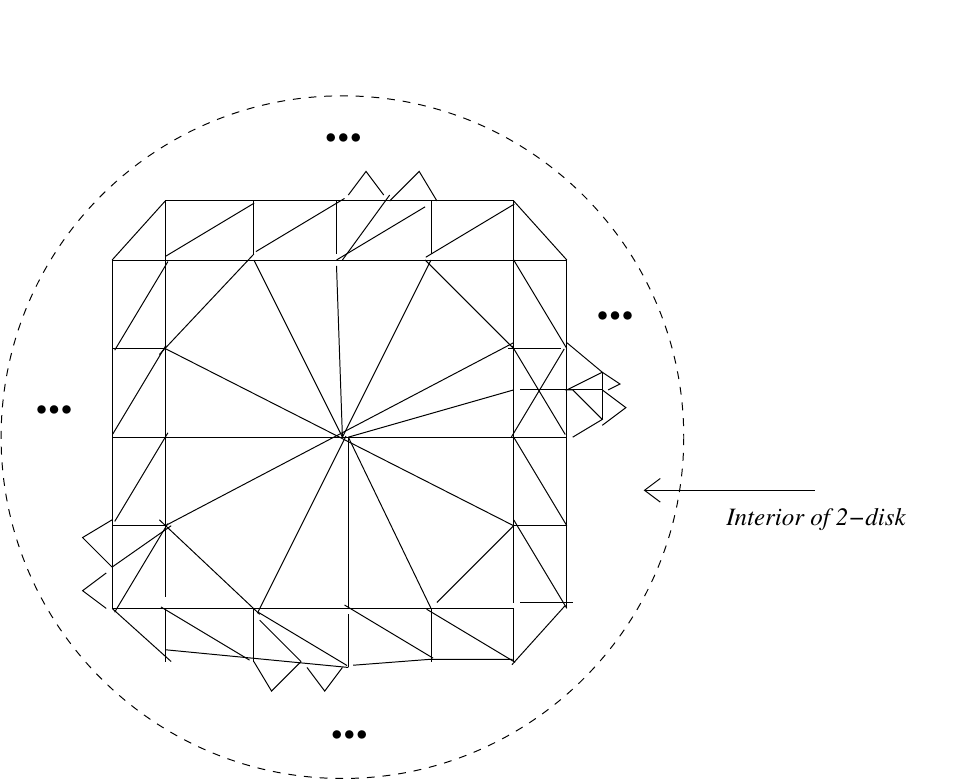}

\vskip 5pt
The polyhedron $A$ has many length metrics, one for each triangulation. In particular,
small
metric balls in polyhedra are cones on their frontiers.  Once a triangulation is chosen, and
hence a length metric, then,  for small enough $\eta >0$, two maps from a compact
domain into $A$ which are distant at most $\eta$ apart pointwise are $\eta
$-homotopic\footnote{Maps $a,
b \colon A\to B$ are $\eta${\it -homotopic} if there is a homotopy between $a$
and $b$ such that for each point $x\in A$ the diameter of the image of
$x\times I$ is $<\eta$.}. This is because when $\eta$ is small and $d(p, q)<\eta$ then that
distance is realized by a unique path which varies continuously with $p$ and $q$.
\vskip 5pt
The following variant on the last sentence will be important for us:
\begin{prop}\label{homotopic}
When $B$ is compact (i.e is triangulated by a finite complex, say $L$)
then there is $\eta _{0}>0$ such that when $\eta <\eta _{0}$ any two maps
from a (not necessarily compact) polyhedron into $B$ which are distant
at most $\eta$ apart pointwise are $\eta$-homotopic.
\end{prop} 
\vskip 5pt
\begin{Def} We call $\eta _0$ the {\it fineness} of $L$.
\end{Def}      
\begin{rems} See \cite{RS} or \cite{Hd} for the following:
\begin{enumerate}[1.]
\item The class of polyhedra is closed under coproducts and finite cartesian products. 
\item If $B$ is a compact polyhedral subset of a finite product $A$ of polyhedra, then
projections
of $B$ to sub-products of $A$ are also compact polyhedral.
\item The join of two compact polyhedra is a compact polyhedron.
\item Smooth manifolds are homeomorphic to polyhedra.
\item The topological join of two locally compact spaces might not be
locally compact, so if the spaces are polyhedra, their join does not satisfy
our definition of ``polyhedron''. However, the join of two abstract
simplicial complexes has an obvious meaning even when they are not locally
finite. Therefore the topological join of two (hence of finitely many)
polyhedra can be triangulated by a simplicial complex which might not
be locally finite. Occasionally, we will allow ourselves to extend the word
``polyhedral'' to include this case.
\end{enumerate}
\end{rems} 
\vskip 5pt

\subsection{Some Consequences of the Additional Properties.}

We now return to the world of geometric $M$-posets\footnote{See Definition
\ref{geometric}.}
and the accompanying notational conventions
introduced in Section \ref{toppo}.
\begin{prop}\label{upcompact}When $r<s$ and $A$ is a compact subset of $X_r$ then
$A^{(s)}$ is a compact subset of $X_s$.
\end{prop}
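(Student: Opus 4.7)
The plan is to exhibit $A^{(s)}$ as the union of a compact family of compact sets parametrized by $A$, and then invoke the standard fact that such a union is compact.

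First, recall that $A^{(s)}=\bigcup_{a\in A}a^{(s)}$, and by Additional Property A1 each $a^{(s)}$ is a non-empty compact subset of $X_s$. Additional Property A2 says that the assignment $\pi_{r,s}\colon X_r\to cX_s$, $a\mapsto a^{(s)}$, is continuous, where $cX_s$ carries the Hausdorff metric (which is defined because the locally compact polyhedron $X_s$ is metrizable). Since $A$ is compact, the image $\mathcal{K}:=\pi_{r,s}(A)\subseteq cX_s$ is compact in the Hausdorff metric, and $A^{(s)}=\bigcup_{K\in\mathcal{K}}K$.

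Second, I would prove the following general lemma: if $M$ is a metric space and $\mathcal{K}\subseteq cM$ is compact (in the Hausdorff metric), then $\bigcup\mathcal{K}\subseteq M$ is compact. The argument is by sequential compactness (which suffices since $X_s$, being a locally compact polyhedron, is metrizable). Given a sequence $(y_n)$ in $\bigcup\mathcal{K}$, choose $K_n\in\mathcal{K}$ with $y_n\in K_n$. By compactness of $\mathcal{K}$ pass to a subsequence (still indexed by $n$) such that $K_n\to K$ in the Hausdorff metric, with $K\in\mathcal{K}$. For each $n$ select $z_n\in K$ realizing $d(y_n,z_n)\le d_H(K_n,K)+1/n\to 0$. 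Since $K$ is compact, $z_n$ has a subsequence converging to some $z\in K$, and along this subsequence $y_n\to z$. Thus every sequence in $\bigcup\mathcal{K}$ has a convergent subsequence with limit in $\bigcup\mathcal{K}$, so $\bigcup\mathcal{K}$ is compact.

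The main (minor) obstacle is simply the lemma on unions of compact families of compacta; everything else is a direct invocation of the Additional Properties. Once the lemma is in hand, the result follows by applying it to $\mathcal{K}=\pi_{r,s}(A)$, which gives $A^{(s)}=\bigcup\mathcal{K}$ compact in $X_s$.
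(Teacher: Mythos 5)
Your proof is correct and follows the same overall strategy as the paper's: use Property A2 to get that $\pi_{r,s}(A)$ is a compact subset of $cX_s$, then invoke a lemma saying the union of a compact family of compact sets is compact. The difference lies in the lemma itself. The paper proves it only for $c\mathbb{R}^n$ (Lemma \ref{hyper}): total boundedness of $C$ gives boundedness of $\bigcup(C)$, the closed-relation argument gives closedness, and Heine--Borel finishes; the proposition then embeds $X_s$ in euclidean space to apply this. You instead prove the lemma for an arbitrary metric space $M$ directly by a sequential-compactness argument (pick $K_n\to K$ with $y_n\in K_n$, use $d(y_n,K)\le d_H(K_n,K)\to 0$, and extract a convergent subsequence inside $K$). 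Your version is a bit more general and avoids the detour through a euclidean embedding; the paper's version is slightly shorter by leaning on Heine--Borel, at the cost of needing to remember that the rectilinear realization of $X_s$ is a closed subset of some $\mathbb{R}^N$. Both are complete and valid.
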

For this we need notation and a lemma. 

\begin{notation} When $(M,d)$ is a metric space and $S\subseteq cM$, we define
$\bigcup(S) \coloneqq \{m \in M\mid m \in s\text{ for some }s\in S\}$, i.e. $\bigcup(S)$
is the union of the sets in $S$. (The notation $cX$ was defined in Section \ref{geom}.)
\end{notation}

\begin{lemma}\label{hyper} Let $C$ be a compact subset of $c{\mathbb R}^{n}$. Then
$\bigcup(C)$ is a compact subset of $\mathbb{R}^n$.
\end{lemma}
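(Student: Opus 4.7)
The plan is to establish that $\bigcup(C)$ is both closed and bounded in $\R^n$, which by Heine--Borel yields compactness. I would actually prove sequential compactness directly since it packages both properties in one argument and makes the use of compactness of $C$ very transparent.

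For boundedness, since $C$ is compact in $(c\R^{n},d_{H})$, it has finite diameter: fix any $s_{0}\in C$, then there exists $D>0$ with $d_{H}(s,s_{0})\leq D$ for all $s\in C$. The set $s_{0}$ is itself a compact subset of $\R^{n}$, hence bounded, say contained in a ball of radius $R$ about the origin. Then every $s\in C$ lies in the closed $D$-neighborhood of $s_{0}$, and hence inside the ball of radius $R+D$ about the origin. Therefore $\bigcup(C)$ is bounded.

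For the harder half, take a sequence $(m_{k})$ in $\bigcup(C)$ and pick $s_{k}\in C$ with $m_{k}\in s_{k}$. By compactness of $C$, pass to a subsequence so that $s_{k_{j}}\to s$ in the Hausdorff metric, with $s\in C$. For each $j$, the definition of $d_{H}$ gives a point $p_{j}\in s$ with $d(m_{k_{j}},p_{j})\leq d_{H}(s_{k_{j}},s)$, which tends to $0$. Since $s$ is itself compact in $\R^{n}$, extract a further subsequence so that $p_{j}\to p\in s$. Then $m_{k_{j}}\to p$ as well, and $p\in s\subseteq \bigcup(C)$. This shows every sequence in $\bigcup(C)$ has a subsequence converging to a point of $\bigcup(C)$, so $\bigcup(C)$ is sequentially compact, hence compact.

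The only delicate step is the one using the definition of Hausdorff distance to produce the approximating points $p_{j}\in s$; once that is in place, compactness of the limit set $s$ furnishes a genuine limit point inside $\bigcup(C)$, and a standard diagonal-style extraction finishes the job. No features of $\R^{n}$ beyond its being a proper metric space are actually used.
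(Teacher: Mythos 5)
Your proof is correct and takes essentially the same route as the paper's: both extract, by compactness of $C$, a Hausdorff-convergent subsequence of the sets containing the given points, and then use the definition of Hausdorff distance plus closedness/compactness of the limit set to place the limit point inside a member of $C$. The only differences are cosmetic: you phrase the conclusion as sequential compactness rather than closed-and-bounded (which in fact makes your separate boundedness paragraph redundant), and you spell out the approximating points $p_j$ that the paper leaves implicit in ``so $x$ must lie in $c$.''
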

\begin{proof} $C$ is totally bounded in the Hausdorff metric, so $\bigcup(C)$ is a
bounded
subset of ${\mathbb R}^{n}$. If $x$ is a limit point of $\bigcup(C)$ there exists
$(x_{n})$, a
sequence in $\bigcup(C)$, converging to $x$. Let $x_{n}\in c_{n}$ where $c_{n}\in C$.
Without loss of generality, assume
$\{c_{n}\}$ converges to $c\in C$ in the Hausdorff metric. So $x$ must lie in $c$. 
Thus $\bigcup(C)$
is a closed and bounded subset of ${\mathbb R}^{n}$.  
\end{proof}

\begin{proof}[Proof of Proposition \ref{upcompact}] By the definition of a geometric
poset the
set $\pi _{r,s}(A)$ is a compact subset of $cX_s$. The polyhedron $X_s$ can be
regarded as a
subset of a Euclidean space, so Lemma \ref{hyper} implies $A^{(s)}=\bigcup(\pi
_{r,s}(A))$ is
compact. 
\end{proof}   
\vskip 5pt
\begin{prop}\label{poly}
Let $A$ be a compact subset of $X_r$ and let $r\leq s$. If $A$ is polyhedral then
$A^{(s)}$ is
polyhedral.
\end{prop}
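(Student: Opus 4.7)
The plan is to realize $A^{(s)}$ as the projection of a compact subpolyhedron of the product $X_r\times X_s$, and then invoke Remark 3.1(2), which says that projections of compact polyhedral subsets of finite products of polyhedra to sub-products are again compact polyhedral. The natural candidate for the compact polyhedron to project is the ``incidence set''
\[
Q \;\coloneqq\; P \;\cap\; (A\times X_s),
\]
where $P=\{(x,y)\in\mathcal X\times\mathcal X\mid x\leq y\}$ is the order relation. Its projection to the second factor is exactly $A^{(s)}$, since $y\in A^{(s)}$ iff there exists $a\in A$ with $a\leq y$ (and $r\le s$, so the case $r=s$, where $A^{(s)}=A$, is trivial anyway).

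First I would assemble the polyhedrality of $Q$. Because $\mathcal R$ is finite, $\mathcal X=\coprod_r X_r$ is a finite disjoint union of locally compact polyhedra, hence itself a polyhedron, and the Polyhedral Diagonal Property says $P$ is a closed subpolyhedron of $\mathcal X\times\mathcal X$. Restricting to the clopen piece $X_r\times X_s$ gives that $P_{r,s}\coloneqq P\cap(X_r\times X_s)$ is a closed subpolyhedron of $X_r\times X_s$. On the other hand, $A$ is a closed subpolyhedron of $X_r$ by hypothesis, so $A\times X_s$ is a closed subpolyhedron of $X_r\times X_s$ (a common rectilinear refinement of a triangulation of $X_r$ containing $A$ as a subcomplex, crossed with any triangulation of $X_s$). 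The intersection of two closed subpolyhedra of a polyhedron is again a closed subpolyhedron, since both can be made subcomplexes of a common rectilinear refinement. Hence $Q$ is a closed subpolyhedron of $X_r\times X_s$.

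Next I would check that $Q$ is compact. If $(a,y)\in Q$, then $a\in A$ and $a\leq y$ with $y\in X_s$, so $y\in a^{(s)}\subseteq A^{(s)}$; thus $Q\subseteq A\times A^{(s)}$. By Proposition \ref{upcompact}, $A^{(s)}$ is compact, so $A\times A^{(s)}$ is compact, and $Q$, being closed in $A\times A^{(s)}$, is compact. Finally, Remark 3.1(2) applies: the projection $\pi_2\colon X_r\times X_s\to X_s$ sends the compact subpolyhedron $Q$ to a compact subpolyhedron of $X_s$, and this projection is precisely $A^{(s)}$. This completes the proof.

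I do not expect a serious obstacle; the only thing to watch is that $X_s$ need not be compact, so one cannot work with $X_r\times X_s$ or with $P_{r,s}$ directly, but must first cut down to the compact piece $A\times A^{(s)}$ using Proposition \ref{upcompact} before projecting. The Polyhedral Diagonal Property is doing the essential work.
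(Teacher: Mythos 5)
Your proof is correct and takes essentially the same route as the paper: the paper also sets $B := (A\times X_s)\cap P$, observes $B\subseteq A\times A^{(s)}$ is polyhedral, uses Proposition~\ref{upcompact} to get compactness, and concludes by projecting to $X_s$ via the same Remark~3.1(2). You spell out the polyhedrality of the incidence set in slightly more detail, but the key lemma, the decomposition, and the projection step are identical.
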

\begin{proof}
Define $B \coloneqq (A\times X_{s})\bigcap {\mathcal P}$, where ${\mathcal P}$ is the
order
relation in the $M$-poset ${\mathcal X}$. 
Then $B =\{(a,x)\mid a<x\}\subseteq A\times A^{(s)}$ is polyhedral (being the
intersection of polyhedra). The $X_s$ projection
of $B$ is $\{x\mid a<x \text{ for some }a\in A\}$. This is $A^{(s)}$,
and is compact by Proposition \ref{upcompact}. There are compact polyhedra
$R\subseteq
X_r$ and $S\subseteq X_s$ such that 
$A\times A^{(s)}\subseteq R\times S$. Since $B$ is polyhedral and is
a subset of $R\times S$, $B$ is compact polyhedral. Thus the projection
of $B$, namely $A^{(s)}$, is polyhedral.
\end{proof}
\vskip 5pt

\subsection {Tools from geometric topology}\label{tools}
Here we state three theorems, well-known in geometric topology but perhaps less
well-known in
other mathematical communities.
\vskip 5pt
Recall that a space $Z$ is {\it locally $n$-connected} if for each $z\in Z$ and each
neighborhood
$U$ of $z$ there is a neighborhood $V$ of $z$ such that, for all $k\leq n$, every map
from the
$k$-sphere $S^k$ into $V$ extends to a map of the $(k+1)$-ball $B^{k+1}$ into $U$.
For us,
the important point is that polyhedra are locally $n$-connected for all $n$. 
\vskip 5pt
\begin{Def} When $K$ is a simplicial complex and $L$ is a subcomplex containing all
the vertices of
$K$, a
{\it partial realization} of $K$ in a space $Z$ relative to a cover $\mathcal A$ of $Z$ is a
map $\psi :L\to Z$ such that, for every simplex $\sigma $ of $K$,  there is a set $A\in
{\mathcal
A}$ with $\psi (\sigma\bigcap L)\subseteq A$. When $L=K$ this is a {\it full
realization}.
\end{Def}
\vskip 5pt
\begin{thm}\label{hu2} Let $Z$ be a locally $(n-1)$-connected metrizable space.  Every
open
cover $\mathcal A$ of $Z$ has an open refinement $\mathcal B$ such that every partial
realization $j_{0}:L\to Z$ relative to $\mathcal B$ extends to a full realization $j:K\to Z$
relative to $\mathcal A$. 
\end{thm}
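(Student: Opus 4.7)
The plan is to use the local $(n-1)$-connectedness of $Z$ to construct a chain of successive open refinements
\[
\mathcal{A} = \mathcal{A}_0, \mathcal{A}_1, \ldots, \mathcal{A}_n = \mathcal{B},
\]
each $\mathcal{A}_{k+1}$ refining $\mathcal{A}_k$, enjoying the following filling property at level $k$: for every $V \in \mathcal{A}_{k+1}$ there is a $U \in \mathcal{A}_k$ with $V \subseteq U$ such that every map $S^k \to V$ extends to a map $B^{k+1} \to U$. That such a refinement exists at each stage is the global, uniform content of local $k$-connectedness; the metrizability (hence paracompactness) of $Z$ is what converts pointwise local extendibility into a refinement of the entire cover.

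Given a partial realization $j_0 \colon L \to Z$ relative to $\mathcal{B}$, one extends $j_0$ over $L \cup K^{(k)}$ by induction on $k$, carrying the inductive data: for every simplex $\sigma$ of $K$ a pre-chosen set $U_\sigma^{(k)} \in \mathcal{A}_{n-k}$ containing the current image of $\sigma \cap (L \cup K^{(k)})$, with $U_\tau^{(k)} \subseteq U_\sigma^{(k)}$ whenever $\tau$ is a face of $\sigma$. The base case $k=0$ uses that $L \supseteq K^{(0)}$ and that the partial realization puts the vertex images of every simplex $\sigma$ into a common set of $\mathcal{B} = \mathcal{A}_n$. For the inductive step, each $(k+1)$-simplex $\sigma \not\subseteq L$ has $j(\partial\sigma)$ sitting inside $U_\sigma^{(k)} \in \mathcal{A}_{n-k}$ (by face-compatibility, the image of each $k$-face of $\sigma$ lies in a set already contained in $U_\sigma^{(k)}$), so the filling property extends the map over $\sigma$ into $U_\sigma^{(k+1)} \in \mathcal{A}_{n-k-1}$. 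After $n$ inductive steps every simplex of $K$ has its image in a set of $\mathcal{A}_0 = \mathcal{A}$, giving the required full realization.

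The principal obstacle is the face-compatibility of the system $\{U_\sigma^{(k)}\}$, i.e. arranging that for every simplex $\sigma$ the target sets associated with its various faces really nest inside $U_\sigma^{(k)}$ at each stage. The standard device is to fix in advance, using the partial-realization hypothesis, for every simplex $\sigma$ of $K$ a single $A_\sigma \in \mathcal{A}$ containing $j_0(\sigma \cap L)$ and arranged so that $A_\tau \subseteq A_\sigma$ whenever $\tau \prec \sigma$; this face-coherent tower of top-level sets is then refined downward through the chain so that $A_\tau^{(k)} \subseteq A_\sigma^{(k)}$ is preserved at every level. Setting up this coherent initial assignment, and verifying that the filling property at each stage can be applied in a way that respects it, is the combinatorial core of the argument; once it is in place, the skeleton-by-skeleton extension runs by the inductive scheme above.
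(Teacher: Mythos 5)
The paper does not prove this result; it simply cites Theorem V.4.1 of Hu's \emph{Theory of Retracts}, so there is no in-paper argument to compare against. Evaluating your sketch on its own merits: the overall strategy --- a tower of $n$ successive open refinements $\mathcal{A}=\mathcal{A}_0 \supset \mathcal{A}_1 \supset \cdots \supset \mathcal{A}_n = \mathcal{B}$ carrying a filling property at level $k$, followed by a skeleton-by-skeleton extension --- is indeed the right strategy and matches what Hu does. You have also correctly identified where the real work lies, namely in keeping the target sets coherent across faces as the extension proceeds. But the device you propose for achieving this coherence does not work.

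Specifically, you claim one can ``fix in advance, for every simplex $\sigma$ of $K$, a single $A_\sigma\in\mathcal{A}$ containing $j_0(\sigma\cap L)$ and arranged so that $A_\tau\subseteq A_\sigma$ whenever $\tau\prec\sigma$.'' This is generally impossible: a face $\tau$ typically lies in several incomparable simplices $\sigma_1,\sigma_2,\dots$, and nesting would force $A_\tau\subseteq A_{\sigma_1}\cap A_{\sigma_2}\cap\cdots$, which need not contain any member of $\mathcal{A}$ at all. (The $A_{\sigma_i}$ do pairwise intersect --- they all contain $j_0(v)$ for any common vertex $v$ --- but that is far short of being nested.) The correct fix, and the one Hu uses, is to interleave \emph{star}-refinements into the tower: one does not arrange nesting, but rather observes that after each filling step the new target sets for a simplex and all its faces pairwise intersect (sharing vertex images), so they lie in a common star; a star-refinement then places this star inside a single set of the next coarser cover, restoring the inductive hypothesis. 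Metrizability is used here (via paracompactness) precisely to produce star-refinements, not only to produce the filling refinements as you say. With that substitution your induction closes; without it, the ``face-coherent tower'' you posit does not exist and the inductive step breaks at the first $2$-simplex whose $1$-faces were filled into different sets. You should also record the hypothesis, implicit in your use of $n$ inductive steps, that $\dim K\leq n$ (this is part of Hu's statement but is omitted from the paper's quotation of it).
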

\begin{proof}
This is Theorem V.4.1 of \cite{Hu}.
\end{proof}
\vskip 5pt
Recall that a map is {\it proper} if the pre-image of every compact subset of its codomain
is
compact. A proper map $q:A\to B$ between locally compact polyhedra is {\it cell-like} if 
$q^{-1}(b)$ has trivial shape\footnote{See Section \ref{geom}.} for every $b\in B$.
\vskip 5pt
\begin{thm}\label{lacher} Let $A$ and $B$ be locally compact metric
spaces, $K$ a finite-dimensional, locally finite simplicial complex, $L$
a subcomplex of K, $\theta :A\to B$ a (proper) cell-like map, $\varphi :K\to
B$ a proper map, $\psi :L\to A$ a proper map such that $\theta \circ \psi =
\varphi\mid L$, and $\epsilon :B\to (0,\infty)$ a function. Then there
exists a proper map ${\widebar \varphi}:K\to A$ such that 
$d(\theta \circ {\widebar \varphi}(x), \varphi (x))\leq \epsilon (\varphi (x))$ for every
$x\in K$.
\end{thm}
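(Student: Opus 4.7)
The plan is to construct $\bar\varphi$ cell by cell, extending the prescribed lift $\psi$ on $L$ across the relative skeleta of $(K,L)$ and using the trivial shape of cell-like pre-images to perform each extension. First I would exhaust $K$ by a nested sequence of compact subcomplexes $K_{1}\subseteq K_{2}\subseteq \cdots$, which is possible since $K$ is finite-dimensional and locally finite, and choose compact exhaustions of $A$ and $B$ compatible with the properness of $\theta$, $\varphi$, and $\psi$. On each compact piece $K_{i}\setminus \mathrm{int}\,K_{i-1}$ I would replace the continuous $\epsilon$ by a sufficiently small positive constant $\epsilon_{i}$ bounding $\epsilon\circ\varphi$ from below, reducing the problem to a uniform $\epsilon_{i}$-approximation on each piece.

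Next comes the skeletal induction. Suppose $\bar\varphi$ is defined on $L\cup K^{(n-1)}$ with the required $\epsilon_{i}$-control. For each $n$-cell $\sigma$ of $K\setminus L$, the lift $\bar\varphi|\partial\sigma$ is a map of an $(n-1)$-sphere into $A$ whose $\theta$-image lies within a prescribed small neighborhood $V$ of the compact set $\varphi(\sigma)\subset B$. Since $\theta$ is cell-like and proper, $\theta^{-1}(\overline{V})$ is compact and has trivial shape relative to slightly larger neighborhoods in $A$; this is the essential content of the ``cell-like'' hypothesis. Consequently any map of $\partial\sigma$ into $\theta^{-1}(V)$ is null-homotopic inside $\theta^{-1}(V')$ for some enlargement $V'\supset V$, and so $\bar\varphi|\partial\sigma$ extends to a map $\sigma\to\theta^{-1}(V')$. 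By subdividing $\sigma$ finely enough, the neighborhoods $V$ can be taken arbitrarily small, ensuring $\theta\circ\bar\varphi$ is $\epsilon_{i}$-close to $\varphi$ on $\sigma$.

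Properness of $\bar\varphi$ would be maintained by confining each extension to a pre-assigned compact region of $A$ determined from the compact exhaustion of $B$, using properness of $\theta$ together with properness of $\psi$ to control the lift on $L$. This prevents $\bar\varphi(K)$ from escaping to infinity in $A$ while $x$ remains in any given compact subset of $K$.

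The principal obstacle is the lifting step over a single simplex: pre-images of points under $\theta$ have only \emph{trivial shape}, not literal contractibility, so one cannot extend a sphere-map across a ball inside a single fibre. The resolution is to work in a small neighborhood $V$ of $\varphi(\sigma)$ in $B$ rather than in a single fibre, which converts trivial shape into an honest approximate extension property. Verifying that the trivial-shape clause in the definition of ``cell-like'' does yield this extendability is the substantive technical content---essentially the argument of Lacher---and everything else is bookkeeping for properness and for replacing the continuous tolerance $\epsilon$ by local constants.
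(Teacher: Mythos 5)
The paper does not prove this theorem: it simply cites Lemma~2.3 of Lacher's survey, and your task was effectively to reconstruct the substance of that external proof. Your outline is in the right spirit---a skeletal induction over $(K,L)$, local constants in place of the continuous tolerance $\epsilon$, and an extension step over each simplex that trades ``trivial shape of fibres'' for extendability of sphere-maps into preimages of small neighborhoods. That is indeed the shape of Lacher's argument.

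There is, however, a genuine gap in the extension step as you have stated it. You write that $\theta^{-1}(\overline{V})$ ``is compact and has trivial shape relative to slightly larger neighborhoods in $A$,'' and you attribute this to cell-likeness. But cell-likeness only says that the individual point-preimages $\theta^{-1}(b)$ have trivial shape; it does not say that the preimage of a compact connected set $\overline{V}$ has trivial shape, and in general it will not. The correct ingredient is the $UV^{\infty}$ property: for each $b\in B$ and each neighborhood $W$ of $\theta^{-1}(b)$ in $A$, trivial shape of the fibre gives a smaller neighborhood $W'$ with $W'\hookrightarrow W$ null-homotopic in all degrees, and then properness of $\theta$ (via upper semi-continuity of the fibre map $b\mapsto\theta^{-1}(b)$) converts these $A$-side neighborhoods into preimages $\theta^{-1}(V')\subseteq W'\subseteq W\subseteq\theta^{-1}(V)$ of $B$-side neighborhoods. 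The null-homotopy of $\bar\varphi\vert\partial\sigma$ therefore lives in a neighborhood of a single fibre, not in a set of trivial shape; one then shrinks $\sigma$ by subdivision so that $\varphi(\sigma)$ fits in one such small $V$. Your conclusion (sphere-maps into $\theta^{-1}(V)$ extend across balls inside $\theta^{-1}(V')$) is the right one, but the route to it via ``$\theta^{-1}(\overline V)$ has trivial shape'' is unjustified, and if $\overline V$ is not taken small enough the extension can genuinely fail. With the $UV^{\infty}$ argument substituted, your sketch matches the standard proof.
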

\begin{proof} This is Lemma 2.3 of \cite{L}.
\end{proof}
\vskip 5pt
\begin{thm}\label{lacher2} A cell-like map between locally compact polyhedra is a
proper
homotopy equivalence.
\end{thm}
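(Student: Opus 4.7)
The plan is to build a proper homotopy inverse $g \colon B \to A$ for $\theta$ by applying Theorem \ref{lacher} twice. Locally compact polyhedra are absolute neighborhood retracts---they are locally $n$-connected for every $n$---so Theorem \ref{hu1} supplies the proper homotopies that convert uniformly close proper maps into properly homotopic ones, provided the control is arranged to decay at infinity.

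First I would fix a triangulation of $B$ and apply Theorem \ref{lacher} with $K = B$, $L = \emptyset$, $\varphi = \mathrm{id}_B$, and a function $\epsilon \colon B \to (0, \infty)$ chosen small enough---relative to the refinement supplied by Theorem \ref{hu1} applied to an open cover of $B$ whose members shrink along the ends of $B$---that every proper map into $B$ which is pointwise $\epsilon$-close to $\mathrm{id}_B$ is properly homotopic to $\mathrm{id}_B$. This produces a proper map $g \colon B \to A$ with $d(\theta g(b), b) \leq \epsilon(b)$ for all $b\in B$, and hence a proper homotopy $H \colon B \times I \to B$ from $\theta \circ g$ to $\mathrm{id}_B$.

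Next, to show $g \circ \theta$ is properly homotopic to $\mathrm{id}_A$, I would set $\varphi \coloneqq H \circ (\theta \times \mathrm{id}_I) \colon A \times I \to B$, choose a triangulation of $A \times I$ so that $L \coloneqq A \times \{0, 1\}$ is a subcomplex, and define $\psi \colon L \to A$ by $\psi(a, 0) = g\theta(a)$ and $\psi(a, 1) = a$. The identities $\theta\psi(a,0)=\theta g\theta(a)=H(\theta a,0)=\varphi(a,0)$ and $\theta\psi(a,1)=\theta(a)=H(\theta a,1)=\varphi(a,1)$ show that $\theta\circ\psi = \varphi|L$. Theorem \ref{lacher} then furnishes a proper controlled lift $\bar\varphi \colon A \times I \to A$ which in its standard form extends $\psi$ on $L$; this $\bar\varphi$ is the desired proper homotopy from $g\theta$ to $\mathrm{id}_A$. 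Combined with the first step, $\theta$ is a proper homotopy equivalence.

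The main obstacle is properness at infinity: the function $\epsilon$, the refinements produced by Theorem \ref{hu1}, and the triangulations must all be arranged so that the resulting maps and homotopies are proper, not merely continuous. The standard device is to let $\epsilon$ tend to zero along the ends of the target and to work with star-finite covers whose mesh shrinks at infinity, exploiting the local compactness of $A$ and $B$ together with the properness of the cell-like map $\theta$. A secondary but essential bookkeeping point is that the lift in Theorem \ref{lacher} may be taken to extend the given partial lift $\psi$ on $L$; this is what allows $\bar\varphi$ to serve as an honest homotopy rather than merely an approximate one.
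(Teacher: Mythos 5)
The paper offers no argument of its own here; Theorem~\ref{lacher2} is proved simply by citation to Theorem~1.5 of \cite{L}. Your proposal reconstructs exactly the argument Lacher gives there, building a proper right-inverse $g$ of $\theta$ by applying Theorem~\ref{lacher} to $\mathrm{id}_B$ and then using Theorem~\ref{lacher} a second time, with the proper homotopy $H\circ(\theta\times\mathrm{id}_I)$ as the map to be lifted and the partial lift prescribed on $A\times\{0,1\}$, to manufacture a proper homotopy $g\theta\simeq\mathrm{id}_A$. The one point you flag is indeed the crucial one: as transcribed in the paper, the conclusion of Theorem~\ref{lacher} states only the $\epsilon$-closeness of $\theta\circ\widebar\varphi$ to $\varphi$ and omits the clause $\widebar\varphi|_L=\psi$, but that clause is present in Lacher's Lemma~2.3 and is what makes your $\widebar\varphi$ an honest homotopy between $g\theta$ and $\mathrm{id}_A$ rather than an approximation of one. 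With that reading of Theorem~\ref{lacher}, and with the controlled shrinking of $\epsilon$ toward the ends of $B$ that you describe to keep everything proper, your argument is correct and is, in substance, the proof in the cited reference.
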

\begin{proof} This is Theorem 1.5 of \cite{L}.
\end{proof}
\vskip 5pt

\section{The special case ${\mathcal R}=[2]$}\label{2case}
\begin{notation} In this section, when $A\subseteq X_{1}$ we write $A'$ rather than
$A^{(2)}$.  
\end{notation}
\vskip 5pt
Nearly all the ideas needed for our proof of Theorem \ref{main} occur
when dealing with two special cases, ${\mathcal R}=[2]$ and ${\mathcal R}=[3]$, where
notation is less cluttered. We treat ${\mathcal R}=[2]$ here, and ${\mathcal R}=[3]$ in
the
following section.
\vskip 5pt
When ${\mathcal R}=[2]$, the geometric $M$-poset is ${\mathcal X}=X_{1}\coprod
X_{2}$;
$\mu (X_{1})=1$ and $\mu(X_{2})=2$. It is assumed throughout that ${\mathcal X}$ is
a 
geometric poset. 
\vskip 5pt
In this case, the join $X_{1}*X_{2}$ consists of a copy of $X_{1}$, a copy of $X_{2}$,
and for
each $x\in X_{1}$ and $y\in
X_{2}$ a linearly parametrized path $\omega$ with $\omega (0)=x$ and $\omega
(1)=y$. We call such a path (or its image) a {\it segment} from $x$
to $y$. The point $\omega (t)$ on a segment from $x$ to $y$ is often
more conveniently denoted by $(1-t )x+ty$ where $0\leq t\leq1$. The
map $q\colon X_{1}\times X_{2}\times I\to X_{1}*X_{2}$ taking $(x,y,t)$ to
$(1-t)x+ty$
defines the
quotient topology on $X_{1}*X_{2}$.
\vskip 5pt

\subsection{Weak contractibility of basis elements}\label{2.2} 
Recall that throughout we use the Convenient Basis of Section \ref{convenient}. Basis
elements
lying in $X_2$ are contractible. Here we prove that when $U$ is a (contractible)
basis element in $X_1$ then $\upset U$ is weakly contractible; i.e.

\begin{thm}\label{weakly}For all $n$, $\pi _{n}({\upset U}, U)$ is trivial.
\end{thm}

{\it Notation:} ${\overline U}$ denotes the closure of $U$ in $X_1$.
\vskip 5pt
Theorem \ref{weakly} is a consequence of the  following variant:

\begin{thm}\label{weakly closed} For a basis element $U$, ${\upset {\overline U}}$ is
weakly
contractible.
\end{thm} 
\noindent {\it Proof that Theorem \ref{weakly closed} implies Theorem \ref{weakly}:}
Each basis element $U$ contains basis elements $V_k$ such that ${\overline
V_k}\subseteq V_{k+1}$, and $\bigcup_{k} {\overline V_{k}}=U$.  It follows that 
$${\upset U}=\bigcup_{k}{\upset {\overline V_{k}}}=\bigcup_{k}{\upset {V_{k}}}.$$ 
A singular sphere $\varphi $ in $\upset U$ has compact image, so it lies in some 
${\upset {V_{k}}}$. Thus, by Theorem \ref{weakly closed}, $\varphi $ is homotopically
trivial in $\upset U$. \qed
\vskip 5pt
The rest of this subsection is devoted to proving Theorem \ref{weakly closed}.
\begin{Def} Let $Z$ be a space, and let $A\subseteq X_1$. A map 
$s' \colon Z\to {\uparrow A}$ {\it lies over} a map $s \colon Z\to A$ if 
$s(z)\leq s'(z)$ for every $z\in Z$.
\end{Def}
\vskip 5pt
Fundamental to understanding homotopies in this context is the following
\vskip 5pt
\begin{prop}\label{cover} 
If $s'$ lies over $s$ and $C:=(s')^{-1}(A)$ then, as maps $Z\to \upset A$, $s'$ and $s$
are homotopic rel $C$. 
\end{prop}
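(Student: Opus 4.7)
The plan is to exhibit an explicit ``jump'' homotopy
\[
H \colon Z \times I \to \upset A, \qquad H(z,0) = s(z), \qquad H(z,t) = s'(z)\text{ for } t>0,
\]
and verify that it is continuous when $\upset A$ carries the subspace Up topology inherited from $\mathcal X$. The only non-obvious thing is continuity at points $(z_0, 0)$, since for $t>0$ the map is just $s'$ composed with projection, which is continuous by hypothesis together with the fact (from the Openness Assumption) that the inclusion $X_2 \hookrightarrow \mathcal X$ is a topological embedding for the Up topology.

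To handle the ``jump'' at $t=0$, I would use the fundamental asymmetry of the Up topology: a basic Up-open neighborhood of a point $x \in X_1$ has the form $\upset U$ for some $U \in \mathcal U_1$ with $x \in U$, and such a set automatically contains every $y > x$, since $U^{(2)} \subseteq \upset U$. Given such a basic neighborhood $\upset U$ of $s(z_0)$, continuity of $s \colon Z \to X_1$ furnishes a neighborhood $W$ of $z_0$ with $s(W) \subseteq U$. Then $H(W \times \{0\}) \subseteq U \subseteq \upset U$, while for any $w \in W$ and $t > 0$ the covering condition $s(w) < s'(w)$ places $s'(w)$ in $U^{(2)} \subseteq \upset U$; hence $H(W \times I) \subseteq \upset U$, giving continuity at $(z_0, 0)$.

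For continuity at $(z_0, t_0)$ with $t_0 > 0$, a basic Up-neighborhood of $s'(z_0) \in X_2$ is just an open subset of $X_2$ (since $2$ is maximal in $\mathcal R = [2]$, $\upset V = V$ for $V \in \mathcal U_2$), so this reduces to the assumed continuity of $s'$ together with the fact that $H$ equals $s'$ on the open half-plane $Z \times (0, 1]$.

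I do not anticipate any real obstacle here: the statement is essentially just a reflection of the order-theoretic flavour of the Up topology, namely that ``going up'' is a closed / limit operation while ``going down'' is not. The only thing one must be slightly careful about is to distinguish the roles of $X_1$ and $X_2$ in the argument for continuity; the argument at level $1$ uses the covering hypothesis crucially, while the argument at level $2$ is automatic from continuity of $s'$ and the embedding property of $X_2$.
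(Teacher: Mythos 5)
Your proposal is correct and takes essentially the same approach as the paper: the paper's homotopy $F(z,t)$ equals your $H(z,1-t)$, i.e.\ the two constructions are the same ``jump'' homotopy run in opposite time directions, and both continuity verifications hinge on the identical key observation that the covering condition $s(w)<s'(w)$ forces $s'(w)\in U^{(2)}$ whenever $s(w)\in U$ (equivalently, $s^{-1}(V)\subseteq s'^{-1}(V')$).
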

\begin{proof}
Define $F \colon Z\times I\to \upset A$ by 
\begin{enumerate}[(i)]
\item $F=s'$ on $Z\times [0,1)$;
\item $F=s$ on $Z\times \{1\}$.
\end{enumerate}
We check continuity of $F$. When $W$ is open in $A'$ then
$F^{-1}(W)=s'^{-1}(W)\times [0,1)$ which is open in $Z\times I$. Whenever $V$ is
open in $A$ we have 
\begin{align*}
F^{-1}(V\cup V') & = F^{-1}(V)\cup F^{-1}(V')\\
& = [s^{-1}(V)\times \{1\}]\cup [s'^{-1}(V')\times [0,1)]
\end{align*}
Since $s^{-1}(V)\subseteq s'^{-1}(V')$ we conclude that $F^{-1}(\upset V)$ is open in
$Z\times I$. Clearly, the homotopy is stationary on $C\times I$.
\end{proof}
\vskip 5pt
{\it Proof of Theorem} \ref{weakly closed}:
We abbreviate the $n$-ball $B^n$ to $B$. Recall that $U$ is a basis element in $X_1$. 
We consider an arbitrary map $$g \colon (B,\del B)\to (\upset {\overline U}, {\overline
U}).$$
\noindent We write $C \coloneqq g^{-1}({\overline U})$. This $C$ is a compact
subset of $B$ containing $\del B$, and ${\overline U}$ is closed in ${\upset {\overline
U}}$. 
\vskip 5pt
{\bf Overall Strategy for the proof:} We will show that there are maps $h'$ and $h$ from
$B$ to $\upset {\overline U}$ satisfying:
\begin{enumerate}[(i)]
\item $h'=h=g$ on $C$;
\item $h'$ maps $B-C$ into ${\overline U}'$;
\item $h$ maps $B$ into ${\overline U}$;
\item $h'$ lies over $h$;
\item $h'$ is homotopic (in $\upset {\overline U}$) to $g$ rel $C$.
\end{enumerate}
\noindent Proposition \ref{cover} will then imply that $h'$ is homotopic to $h$ rel $C$
and hence that $g$ represents the trivial element of $\pi _{n}(\upset {\overline U},
{\overline
U})$.
\vskip 5pt
We need some preparatory lemmas\footnote{Lemma \ref{relation} and Lemma
\ref{uly} are used in proving Lemma \ref{diam} and Lemma \ref{continuous}. Lemma \ref{yznearx} is for
Lemma \ref{uly}.}:
\vskip 5pt
\begin{lemma}\label{relation} If $\{V_m\}_{m=1}^\infty$ is a sequence of
neighborhoods of
$x\in X_{1}$ such that $\bigcap\limits_{m=1}^\infty V_m=\{x\}$, then 
$\bigcap\limits_{m=1}^\infty V_{m}'=x'$. 
\end{lemma}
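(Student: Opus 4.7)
The inclusion $x' \subseteq \bigcap_m V_m'$ is immediate: since $x \in V_m$ for every $m$, the up-set $x^{(2)}$ is contained in $V_m^{(2)} = V_m'$ for each $m$.

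For the reverse inclusion, the plan is to combine the continuity of $\pi_{1,2}$ (Property A2) with the compactness of $x^{(2)} = x'$ (Property A1). Introduce the auxiliary set $D_y := \{z \in X_1 \mid z < y\}$. Since $D_y = \pi_{1,2}^{-1}(\{K \in cX_2 \mid y \in K\})$ and the latter target set is closed in the Hausdorff metric on $cX_2$, Property A2 yields that $D_y$ is closed in $X_1$. This reduces the lemma to proving that $x \in D_y$ whenever $y \in \bigcap_m V_m'$.

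Now given such a $y$, the condition $y \in V_m'$ says exactly that $D_y \cap V_m \neq \emptyset$, so we may pick witnesses $x_m \in D_y \cap V_m$. Replacing $V_m$ by $V_1 \cap \dots \cap V_m$, we may assume the $V_m$ are nested; the intersection is still $\{x\}$. Because each basic open neighborhood from $\mathcal{U}_1$ is a PL cone with compact closure, the $x_m$ lie in the compact set $\overline{V_1}$ and therefore admit a convergent subsequence $x_{m_k} \to x^*$. The limit $x^*$ lies in $D_y$ (closed) and in every $\overline{V_m}$ (by nestedness of the tails). After passing, if necessary, to a cofinal refinement for which $\overline{V_{m+1}} \subseteq V_m$ (available from our PL cone basis), we obtain $\bigcap_m \overline{V_m} = \bigcap_m V_m = \{x\}$, whence $x^* = x$ and so $x \in D_y$. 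Equivalently, $y \in x^{(2)} = x'$.

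The main obstacle is the final passage from $\bigcap V_m = \{x\}$ to $\bigcap \overline{V_m} = \{x\}$: this is not a purely point-set consequence of the hypothesis (an arbitrary neighborhood could retain ``far-away'' pockets that disappear only in the limit), and uses in an essential way that our basis $\mathcal{U}_1$ consists of PL cone neighborhoods with compact closures, so that a cofinal sub-sequence with $\overline{V_{m+1}} \subseteq V_m$ exists. Once this geometric input is in place, the argument is just the closedness of $D_y$ together with a nested-compact-set intersection, and A2 enters only in certifying that $D_y$ is closed.
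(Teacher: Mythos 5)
Your argument identifies the right ingredients but introduces a gap the paper's shorter proof avoids. The paper's own proof is a one-liner: pick $x_m\in V_m$ with $x_m<y$, assert $x_m\to x$, then $(x_m,y)\to(x,y)$ and $(x,y)\in P$ because $P$ is closed; hence $y\in x'$. Your introduction of $D_y$ and the closedness argument via A2 is a valid but roundabout route to the same endpoint --- $D_y$ is closed already because it is a slice of the closed relation $P$, which is exactly how the paper gets it, and A2 plays no role.

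The genuine problem in your write-up is the pair of ``WLOG'' reductions. ``Replacing $V_m$ by $V_1\cap\dots\cap V_m$'' keeps $\bigcap V_m=\{x\}$ but silently discards the hypothesis $y\in V_m'$: the operation $A\mapsto A'$ does \emph{not} commute with intersection, so $y\in V_1'\cap\dots\cap V_m'$ does not give a single witness $z\in V_1\cap\dots\cap V_m$ with $z<y$. The same objection kills the ``cofinal refinement'' step: passing to smaller basis elements $V_m'$ with $\overline{V'_{m+1}}\subseteq V'_m$ again shrinks the sets, and nothing ensures $D_y$ still meets each $V'_m$. So the chain ``nested $\Rightarrow$ $x_m\in\overline{V_1}$ compact $\Rightarrow$ subsequential limit $x^*\in\bigcap\overline{V_m}=\{x\}$'' never gets off the ground for the $V_m$ you are actually allowed to use.

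That said, you have put your finger on a real soft spot: the paper's ``Since $\bigcap V_m=\{x\}$, $x_m\to x$'' is not a consequence of $\bigcap V_m=\{x\}$ for arbitrary neighborhoods. It \emph{is} correct when $\{V_m\}$ is a nested neighborhood basis of $x$ (so that eventually every $V_m$ lies in any prescribed neighborhood of $x$), and that is precisely the situation in which the lemma is invoked in the proof of Proposition \ref{continuous}, where ``$\{\overline V_m\}$ is a basic sequence of compact neighborhoods.'' The correct repair is therefore not to massage an arbitrary family $\{V_m\}$ into a nested one --- that cannot be done while preserving $y\in V_m'$ --- but to read the hypothesis of the lemma as requiring $\{V_m\}$ to be such a shrinking basis. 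Once you grant that, both the paper's direct limit-point argument and your $D_y$/compactness argument (minus the two flawed reductions) go through immediately.
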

\begin{proof}
It is clear that $x' \subseteq \bigcap\limits_{m=1}^\infty V_{m}'$.
To show containment in the other direction, let $y \in \bigcap\limits_{m=1}^\infty
V_{m}'$.
By Property A3,  for each $m$, there exists $x_m \in V_m$ such that $x_{m}<y$ in 
$\mathcal{X}$. Since $\bigcap\limits_{m=1}^\infty V_{m}=\{x\}$, $x_m$ converges to
$x$ as $m\to \infty$. Thus, $(x_{m},y)$ converges to $(x,y)$.
Since $\{(x_m,y)\} \subseteq {\mathcal P}$ and the order relation ${\mathcal P}$ is
closed in
$\mathcal{X}^2$, $(x,y)\in {\mathcal P}$.
\end{proof}
\vskip 5pt
\begin{lemma}\label{yznearx} Let $x\in {\overline U}$. Given $\epsilon
>0$, there exists $\eta (x)>0$ such that whenever\footnote{$B_{\delta}$
denotes the ball of radius $\delta$; $N_{\epsilon}$ denotes the $\epsilon
$-neighborhood.} $y$ and $z$ lie in $B_{\eta}(x)$ then $z'\subseteq
N_{\epsilon}(y')$.
\end{lemma}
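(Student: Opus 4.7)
The plan is to reduce the lemma to a uniform estimate on the Hausdorff metric obtained from the continuity of $\pi_{1,2}$, i.e.\ Additional Property A2. Since the fibre map $x\mapsto x'=\pi_{1,2}(x)$ is a continuous map $X_1\to cX_2$, the content of Lemma \ref{yznearx} is essentially a local form of Hausdorff continuity at the point $x\in\overline{U}\subseteq X_1$ (using that $X_1$ is closed in $\mathcal X$), converted from a two-sided Hausdorff bound to the one-sided inclusion the statement asks for.

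Concretely, I would proceed as follows. First, given $\epsilon>0$, apply continuity of $\pi_{1,2}$ at $x$ with tolerance $\epsilon/2$: there exists $\eta=\eta(x)>0$ such that for every $w\in B_\eta(x)\subseteq X_1$ one has $d_H(w',x')<\epsilon/2$, where $d_H$ is the Hausdorff metric on $cX_2$ inherited from a fixed length metric on $X_2$. Next, for any $y,z\in B_\eta(x)$, invoke the triangle inequality for $d_H$ to deduce
\[
d_H(y',z')\;\le\;d_H(y',x')+d_H(x',z')\;<\;\epsilon.
\]
Finally, unpack the definition of the Hausdorff metric: $d_H(y',z')<\epsilon$ means both $y'\subseteq N_\epsilon(z')$ and $z'\subseteq N_\epsilon(y')$, and the latter is exactly the asserted inclusion.

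Two minor points that need verification but are not genuine obstacles. First, one must be sure that $x^{(2)}=x'$ is non-empty and compact, so that it is a bona fide element of $cX_2$; this is guaranteed by Additional Property A1. Second, one must note that $x\in X_1$ (rather than lying merely in the ambient $\mathcal X$), which is automatic because $X_1$ is closed in $\mathcal X$ (by the definition of $M$-poset) and $U\subseteq X_1$, hence $\overline U\subseteq X_1$. With these two remarks in place the argument is entirely formal, and there is no significant obstacle: the work has already been done in setting up the continuity statement A2 and in choosing a metric compatible with the polyhedral structure of $X_2$.
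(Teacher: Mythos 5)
Your proposal is correct and is exactly the argument the paper has in mind: the paper's proof of Lemma \ref{yznearx} simply says ``This is an immediate consequence of Property A2,'' and you have supplied the routine details — continuity of $\pi_{1,2}$ at $x$ with tolerance $\epsilon/2$, the triangle inequality for the Hausdorff metric, and unpacking $d_H(y',z')<\epsilon$ into the one-sided inclusion.
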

\begin{proof} This is an immediate consequence of Property A2.  
\end{proof} 
\vskip 5pt
\begin{lemma}\label{uly} Given $\epsilon >0$ there exists $\delta >0$
such that whenever $p\in C$ and $q\in B_{\delta}(p)-C$ then $g(q)\in
N_{\epsilon}(g(p)')$.  
\end{lemma}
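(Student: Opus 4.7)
The plan is to combine continuity of $g$ in the Up topology with the metric continuity of $\pi_{1,2}$ from Property A2 to get a pointwise estimate near each $p \in C$, then upgrade to a uniform $\delta$ via compactness of $C$. The bridge between the coarse Up topology (where $g$ is continuous) and the original metric topology on $X_2$ (where the neighborhood $N_\epsilon(g(p)')$ is measured) is exactly what A2 provides.

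First, for each $p \in C$ I would apply A2 at $g(p) \in X_1$ to produce an open neighborhood $V_p$ of $g(p)$ in $X_1$ such that $x' \subseteq N_{\epsilon/2}(g(p)')$ and $g(p)' \subseteq N_{\epsilon/2}(x')$ for every $x \in V_p$. By the Openness Property, $\upset V_p$ is then open in the Up topology, so continuity of $g \colon B \to \upset U$ yields a radius $\delta_p > 0$ with $g(B_{2\delta_p}(p)) \subseteq \upset V_p$.

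Next, since $B$ is compact and $C = g^{-1}(U)$ is closed in $B$, the set $C$ is compact. I would pass to a finite subcover $\{B_{\delta_{p_i}}(p_i)\}_{i=1}^N$ of $C$ drawn from $\{B_{\delta_p}(p)\}_{p \in C}$ and set $\delta := \min_i \delta_{p_i}$. For any $p \in C$ and $q \in B_\delta(p) - C$, fix an index $i$ with $p \in B_{\delta_{p_i}}(p_i)$; the triangle inequality places $q$ in $B_{2\delta_{p_i}}(p_i)$, so both $g(p)$ and $g(q)$ land in $\upset V_{p_i}$. The mirror condition $x<y \Rightarrow \mu(x)<\mu(y)$ forces $\upset V_{p_i} \cap X_1 = V_{p_i}$, so $g(p) \in V_{p_i}$ and hence $g(p_i)' \subseteq N_{\epsilon/2}(g(p)')$; meanwhile $q \notin C$ gives $g(q) \in X_2 \cap \upset V_{p_i} = V_{p_i}'$, so $g(q) \in N_{\epsilon/2}(g(p_i)')$. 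A triangle inequality then yields $g(q) \in N_\epsilon(g(p)')$.

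The main obstacle, though modest, is a subtlety about the direction of the Hausdorff estimate. What we want is the one-sided fact that $g(q)$ is close to $g(p)'$, but routing this estimate through the auxiliary base point $p_i$ of the finite subcover forces us to use the full symmetric Hausdorff closeness from A2 in both directions: once to push $g(q) \in V_{p_i}'$ near $g(p_i)'$, and once to push $g(p_i)'$ back near $g(p)'$. Without that two-sided control, the chaining through the base point would not close up.
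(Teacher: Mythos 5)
Your proof is correct and follows essentially the same strategy as the paper's: use Property A2 (continuity of $\pi_{1,2}$ in the Hausdorff metric) to get a local estimate near each $p\in C$, then globalize via compactness of $C$. The paper packages the two-sided Hausdorff estimate into Lemma~\ref{yznearx} and then extracts the uniform $\delta$ as a Lebesgue number for the cover $g^{-1}(\{\upset B_{\eta(x)}(x)\})$ of $C$, whereas you run the finite-subcover-plus-minimum-radius-plus-triangle-inequality argument by hand — equivalent techniques, same underlying idea.
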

\begin{proof} We write ${\mathcal C} \coloneqq \{\upset B_{\eta (x)}(x)\mid x\in
{\overline
U}\}$, where
$\eta(x)$ is as in Lemma \ref{yznearx}. Then $g^{-1}({\mathcal C})$ is an open cover of
the
compact set
$C$. Let $\delta $ be a Lebesgue number for this cover. For some $x\in {\overline U}$,
$g(B_{\delta}(p))\subseteq \upset B_{\eta (x)}(x)$. This means that
for some $w\in B_{\eta (x)}(x)$, $g(q)\in w'$. By Lemma \ref{yznearx},
$g(q)\in N_{\epsilon}(g(p)')$.  
\end{proof}
\vskip 5pt
Now we can proceed with the proof. While it looks complicated, it mainly consists of
$\epsilon-\delta$ arguments.
\vskip 5pt

Let $Q \coloneqq({\overline U}\times {\overline U}')\bigcap {\mathcal
P}$. By Propositions \ref{upcompact} and \ref{poly}, $Q$ is a compact
polyhedron. We choose triangulations $L$ for ${\overline U}$ and $M$
for $ {\overline U'}$. Then $L\times M$ has a subdivision triangulating
$Q$. Each of $L$ and $M$ carries its length metric, and the metric on
${\overline U}\times {\overline U}'$ is taken to be the sum of those
separate metrics. Let $\eta _{0}>0$ be a number less than the fineness
of both length metrics (see Proposition \ref{homotopic}).
\vskip 5pt


For each integer $i\geq 0$ we choose an open cover ${\mathcal A}_{i}$ of $Q$ whose
members have diameter $<\frac{1}{i}$ ($\frac{1}{0}=\infty$) and let ${\mathcal B}_i$
be the refinement given
by Theorem \ref{hu2}. Let $4\lambda _i$ be a Lebesgue number for ${\mathcal B}_i$,
where $\lambda _i$ decreases to $0$ as $i\to \infty$. It will be important later that
$\lambda _{0}$ be chosen to be less than $\eta _{0}$.

For $x\in B-C$ we choose $r(x)\in C$ such that\footnote{It is not claimed that $r$ is
continuous. We will only be interested in its restriction to the (discrete) $0$-skeleton of a
chosen triangulation of $B-C$.} $d(x,r(x))=d(x,C)$. We write $k_{0}(x)=g\circ r(x) \in
{\overline U}$. 
\vskip 5pt
By Property A1, the set $k_{0}(x)'$ is compact and non-empty.
By Lemma \ref{uly}, there is a sequence $(\gamma_i)$, decreasing to $0$,
with the following property: when $x\in B-C$ satisfies $\gamma_{i-1}\leq
d(x,C)\leq \gamma _{i}$ there exists a point $k_{0}'(x)\in k_{0}(x)'$ such
that $d(k_{0}'(x)), g(x))\leq \lambda _i$. As $i$ varies, this defines a
function $k_{0}':B-C\to {\overline U}$. Note that $k_{0}(x)<k_{0}'(x)$
in the poset. We write $j_{0} \coloneqq(k_{0},k_{0}'):B-C\to Q$.
\includegraphics[width=9cm]{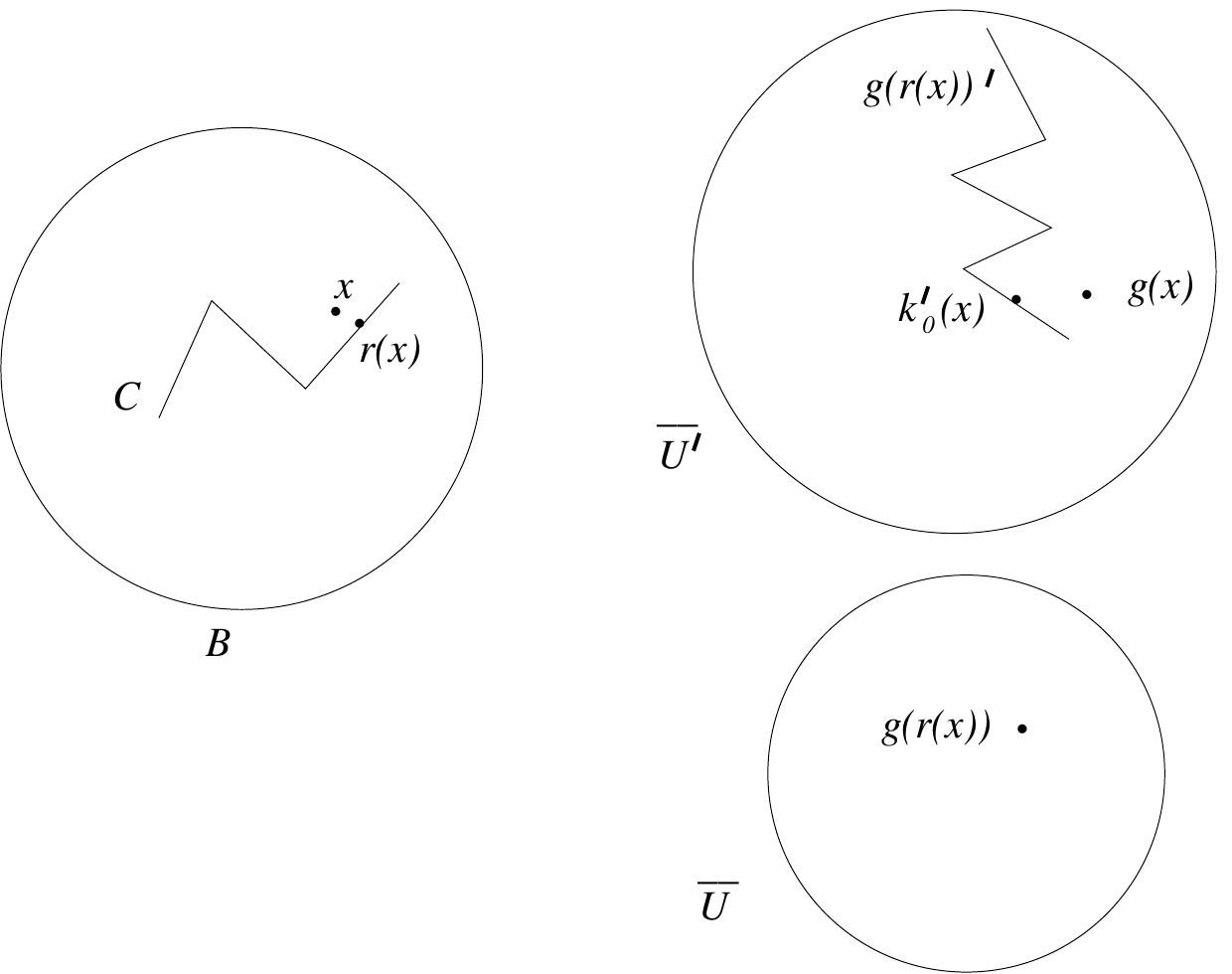}
\vskip 5pt 
We define $I_{i} \coloneqq\{x\in B-C\mid \gamma _{i+1}\leq d(x,C)\leq
\gamma_{i}\}$, and $N_{i}\coloneq I_{i-1}\bigcup  I_{i}\bigcup
I_{i+1}$. These sets are compact but might not be polyhedral. 
The map
$g$ is uniformly continuous on $N_i$, so there is a sequence $(\theta _{i})$, decreasing
and converging to $0$, such
that when $S$ is a subset of $N_i$ of diameter $<\theta _{i}$ then $g(S)$
has diameter $<\lambda _{i}$. 
\vskip 5pt
These choices give us control over how the $0$-skeleton of a suitable triangulation of
$B-C$ is mapped into $Q$, made precise in the following:

\begin{lemma}\label{diam} Let $K$ be a triangulation of $B-C$ such that
each $I_i$ is covered by a finite subcomplex $K_i$ lying in $N_i$,
and every simplex of $K_i$ that meets $I_{i}$ has diameter $<\theta
_i$. There is an increasing sequence of positive integers
     $\ell (i)\geq i$ such that the complexes $K_{\ell (i)}$ are pairwise
     disjoint, and when the simplex $\sigma$ of $K_{\ell (i)}$ meets
     $I_{\ell (i)}$ then 
     \begin{enumerate}[\rm (i)]
          \item $k_{0}(\sigma ^{0})$ has diameter $<\lambda _{i}$
          \item $k_{0}'(\sigma ^{0})$ has diameter $<3\lambda
          _{i}$.
     \end{enumerate}
\end{lemma}

\begin{proof} When $\ell >0$ is an integer and $\sigma $ is a simplex of $K_{\ell +1}$
that meets $I_{\ell +1}$, then $\sigma ^{0}$ has diameter $\leq \theta _{\ell+1}$. 

     For any $u$ and $v$ in $\sigma ^{0}$, we have 
     $d(u, v)\leq \theta _{\ell +1}$, $d(u, r(u))\leq \gamma _{\ell}$, and 
     $d(v, r(v))\leq \gamma _{\ell}$. So 
     $d(r(u), r(v))\leq 2\gamma _{\ell}+\theta _{\ell +1}$. This is an upper bound for
the diameter of $r(\sigma ^{0})$.  

     The restriction $g\mid :C\to {\overline U}$ is uniformly continuous, so there is a
sequence $(\rho _{i})$, decreasing and converging to $0$, such that when 
     $r(\sigma ^{0})$ has diameter $<\rho _i$ then $g\circ r(\sigma ^{0})$ (i.e.
$k_{0}(\sigma ^{0}))$ has diameter $<\lambda_{i} $.  

     So, given $i$, when $\ell $ is large enough that $2\gamma _{\ell}+\theta _{\ell
+1}<\rho _{i}$ then 
     $k_{0}(\sigma ^{0})$ has diameter $\leq \lambda _{i}$. For each $i$, pick $\ell
=\ell (i)$ to be large enough in the above sense, and such that the complexes $K_{\ell
(i)}$ are pairwise disjoint, while the sequence $(\ell (i))$ is increasing. Then (i) is
satisfied. 
     \vskip 5pt
     When $\sigma $ meets $I_i$, the above proof gives 
     $d(u,
     r(u))\leq \gamma _{i}$, and $d(v, r(v))\leq \gamma _{i}$,
     hence $d(k_{0}'(u), g(u))\leq \lambda _i$ and $d(k_{0}'(v),
     g(v))\leq \lambda _i$. Since $d(g(u), g(v))\leq \lambda _i$,
     we see that $k_{0}'(\sigma ^{0})$ has diameter $<3\lambda _{i}$,
     so (ii) is satisfied.
\end{proof}
\vskip 5pt 
\includegraphics[width=9cm]{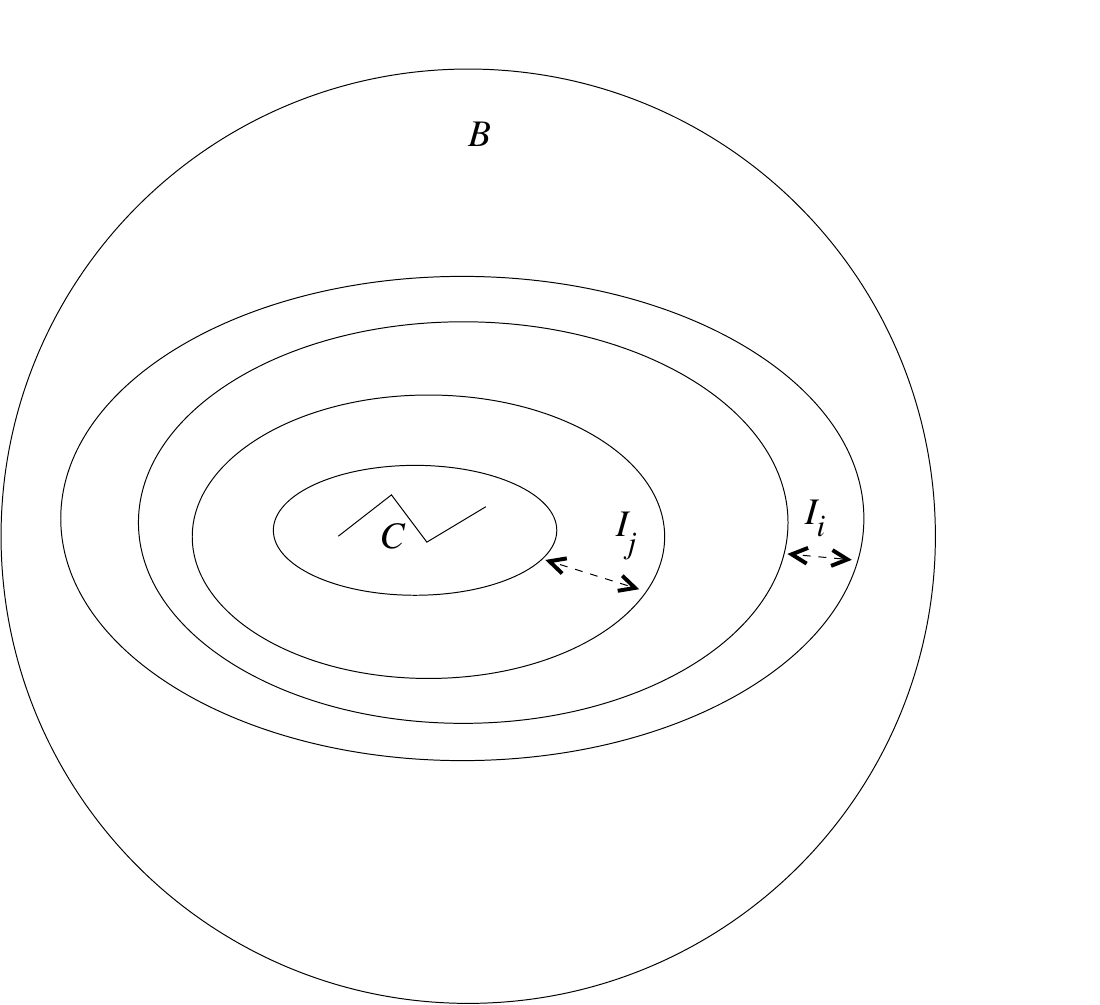}
The ``annuli" $K_{\ell (i)}$ in $B-C$ bear down on $C$ but there will be gaps between
them. The larger complex 
$J_{\ell (i)} \coloneqq\bigcup _{\ell (i)\leq m\leq \ell (i+1)}K_{m}$ contains the $i$th
gap. Each $J_{\ell (i)}$ is a subpolyderon of $B-C$. We may therefore assume that it is
triangulated as a subcomplex of $K$. Since the sequences 
$(\gamma _{i}), (\theta _{i}),$ and $(\rho _{i})$ are all decreasing, we can extend the
lemma as follows:
\begin{addendum}\label{add} The inequalities {\rm (i)} and {\rm (ii)} in Lemma
\ref{diam} continue to hold when $\sigma $ is a simplex of $J_{\ell (i)}$.
\hfill$\square$
\end{addendum}
We consider the function $j_{0}:=(k_{0},k_{0}'):B-C\to Q$ but only on the $0$-skeleton
of $J$ 
(so continuity is not a problem). By Addendum \ref{add}, for every
simplex $\sigma $ of $J_{\ell (i)}$ the set $j_{0}(\sigma ^{0})$
has diameter $<4\lambda _i$ and thus lies in an element of the cover
${\mathcal B}_i$ of $Q$. 

The sequence $\{\ell (i)\}$ starts with $i=0$ (recall that $\frac{1}{0}=\infty$).
The definition of the covers ${\mathcal A}_{i}$ allows us to assume that ${\mathcal
A}_{0}$
is the singleton $\{Q\}$. The refinement ${\mathcal B}_{0}$ has Lebesgue number 
$4\lambda _{0}$.
\vskip 5pt
Let $J \coloneqq\bigcup _{i\geq 0}J_{\ell (i)}$, a subcomplex of
$K$. We wish to extend $j_{0}\mid :J^{0}\to Q$ to a map $j:J\to Q$. We do
this by induction. On $J_{\ell (0)}$, $j_{0}$ maps the $0$-skeleton of
each simplex to a set of diameter $<4\lambda _{0}$. Theorem \ref{hu2}
gives an extension $j$ on $J_{\ell (0)}$. Next, we consider $j_{0}^{+}$ on
$J_{\ell (1)}$, where $j_{0}^{+}=j_{0}$ on the vertices of $J_{\ell (1)}$
and $j_{0}^{+}$ agrees with the (previously defined) $j$ on $J_{\ell (0)}\bigcap J_{\ell
(1)}$. We subdivide this intersection so that the
$j$-image of each simplex in it has diameter $<4\lambda _{1}$, and we
extend this subdivision to  $J_{\ell (0)}\bigcup J_{\ell (1)}$ without
adding further vertices. At this point we have a partial realization\footnote{See Section
\ref{tools}.} of 
$J_{\ell (1)}$ in $Q$ relative to ${\mathcal B}_{1}$, and Theorem \ref{hu2}
gives an extension of $j$ to all of $J_{\ell (1)}$ extending the map $j$
previously defined on $J_{\ell (0)}$. The $j$-image of each simplex in
$J_{\ell(1)}$ lies in an element of ${\mathcal B}_{1}$.  We continue
in this way so that on $J_{\ell (i)}-\text {\rm int}J_{\ell(i-1)}$ the
$j$-image of the $0$-skeleton of each simplex has diameter  $<4\lambda _{i}$, each
time 
using Theorem \ref{hu2}.
\vskip 5pt
We point out that $J$ does not include all of $B-C$, but $N\coloneqq J\bigcup C$ is a
polyhedral neighborhood of $C$ in $B$. The map $j=(k,k')$ has components $k:J\to
{\overline U}$ and $k':J\to {\overline U}'$. We extend both of these
maps\footnote{Previously, we defined $k_0$ and $k'_0$ on all of $K$. We now confine
the domains of $k_0$ and $k'_0$ to $J$. In
what follows we need to be free to extend $j$, i.e. $(k, k')$, to all
of $K\bigcup C$ in a different way.} (without change of label) to agree with
$g$ on $C$. Thus $k$ and $k'$ now have domain $N$. 
\vskip 5pt
\begin{lemma}\label{continuous} The extended functions $k:N\to {\overline U}$ and 
$k':N\to {\overline U}\bigcup {\overline U'}$ are continuous.
\end{lemma}
\begin{proof}The only issue is continuity at any point $p\in C$. Let ${\upset T}$ be a
basic
(open) neighborhood of $g(p)$. Since $g$ is continuous $g^{-1}({\uparrow T})$ is an
open
neighborhood of $p$. It is enough to find a smaller neighborhood, $W$, of $p$ such that
$k'(W)\subseteq T'$ and $k(W)\subseteq T$. 
\vskip 5pt
If $\{{\overline V}_{m}\}$ is a basic sequence of compact neighborhoods
of $g(p)$ in $U$, then $\bigcap \{{\overline V}_{m}'\}=g(p)'$
by Lemma \ref{relation}.                               
By Property A1, $g(p)'$ is a compact non-empty subset of
the open set $T'$, so for sufficiently large $m$ the
compact sets ${\overline V}_{m}'$ and the (closed) frontier of $T '$ are
disjoint. Choose $\epsilon $ so that $2\epsilon =d(g(p)', \text{fr}(T'))$. For this choice of
$\epsilon $, let
$\delta$ be as in Lemma \ref{uly}. The following shows that $B_{\delta}(p)$ is the
required
neighborhood $W$.
\vskip 5pt
When $x$ is a point of $K$ such that $d(x,p)<\delta$ then $d(x,r(x))<\delta$. By
Property A1,
$g(r(x))'$ is compact and non-empty. 
Since $d(x,r(x))<\delta$, $d(g(x),g(r(x))')<\epsilon$,
hence $d(g(x),k'(x))<\epsilon$. And since $d(x,p)<\delta$,
$d(g(x),g(p)')<\epsilon$. Hence $d(k'(x), g(p)')<2\epsilon$. This implies
$k'(x)\in T'$.
\vskip 5pt
Continuity of $k$ at $p$ follows from the fact that $g$ is continuous
at $p$: i.e. given $\epsilon >0$ let $\delta >0$ be such that $g$ maps the
$\delta $-neighborhood of $p$ into the $\epsilon $-neighborhood of
$g(p)$ in $U$. If $d(p,x)<\frac{\delta }{2}$ then $d(p,r(x))<\delta $,
so $d(g(p), g(r(x)))<\epsilon$; i.e. $d(k(p),k(x))<\epsilon$. Thus
$k$ is continuous at $p$.
\end{proof}
\vskip 5pt
The required maps (see Strategy, above) $h$ and $h'$ are defined to agree with $k$ and
$k'$ respectively on $N$. Then $h=h'=g$ on $C$, and $h'$ lies over $h$ (on $N$).
\vskip 5pt

Because the $\lambda _i$ are less than the fineness of our triangulation of 
$\overline U$, Proposition \ref{homotopic} implies that the maps $h'|N-C$ and $g|N-C$
are homotopic. Moreover, the homotopy gets smaller and smaller as $i$ increases, and so
it extends to $g\times \text{\rm identity}$ on $C$. In other words, $h'$ and $g$ are
homotopic on all of $N$.   

\vskip 5pt
It remains to extend $h$ and $ h'$ to the rest of $K$, maintaining the properties (i)-(v) of
the Strategy. 
\vskip 5pt
\includegraphics[width=11cm]{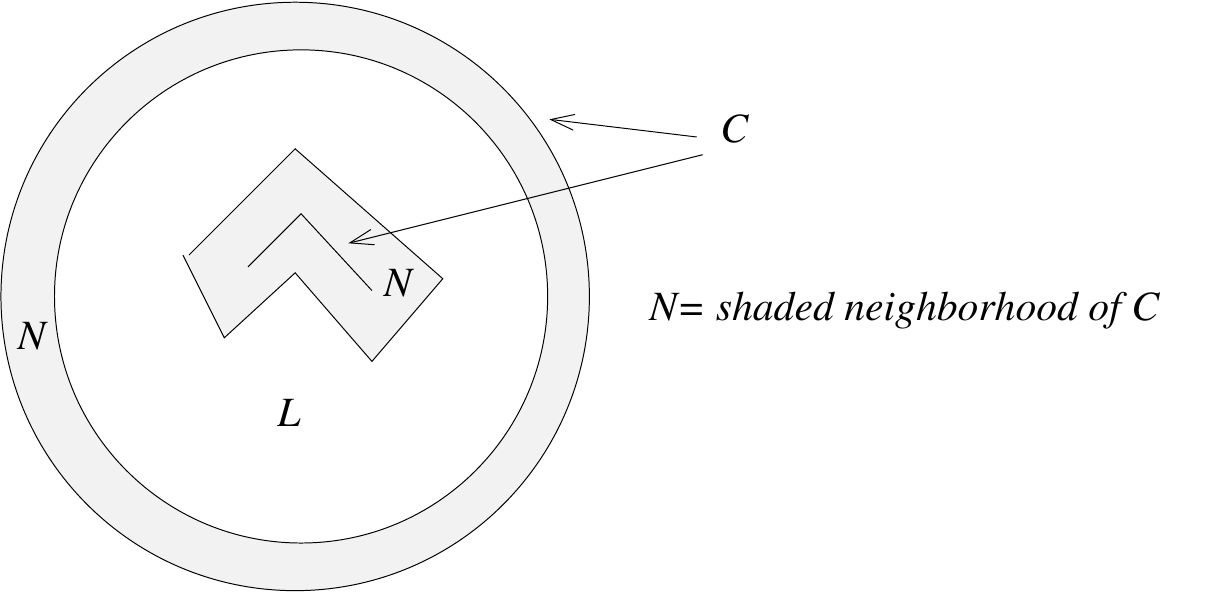}
\vskip 5pt
Recall that $Q \coloneqq({\overline U}\times {\overline U}')\bigcap {\mathcal P}$. The
projection map
$\pi :({\overline U}\times {\overline U}')\to {\overline U}'$ is easily seen to restrict to a
proper
map $Q\to {\overline U}'$,
and, by Property A3 (of the Additional Properties) this map is cell-like.
\vskip 5pt
Let $L$ be the finite subcomplex of $K$ covering the closure of the complement of $N$
in
$B$; i.e. $L\bigcup J=K$. Then $L$ is disjoint from the interior of the
neighborhood\footnote{Recall our convention of using the letter $L$ for both the
simplicial complex and its geometric realization.} $N$. We write ${\overset \bullet
L}=L\bigcap N$, the subcomplex of $L$ such that 
${\overset \bullet L}$ is the frontier of $N$ in $B$. We
have a map $(L\times 0)\bigcup ({\overset \bullet L}\times I)\to {\overline U}'$
agreeing with $g$ on $L\times 0$ and being a homotopy between $g|{\overset
\bullet L}$ and $h'|{\overset \bullet L}$ on ${\overset \bullet L}\times
I$. The Homotopy Extension Property gives a map ${\widebar g}:L\to {\overline U}'$
homotopic to $g$ and agreeing with $h'$ on ${\overset \bullet
L}$. Now, $j|:{\overset \bullet L}\to Q$ satisfies $\pi \circ j|=h'|:{\overset \bullet L}\to
{\overline U}'$. Theorem \ref{lacher} then allows us to extend $j$ continuously to all of
$L$. 

$$
\xymatrix{
{\overset \bullet L} \ar[d]_{\text{inclusion}} \ar[r]^{j\mid} &Q\ar[d]^{\pi\mid}\\
L \ar[r]^{\widebar g} \ar[ur]^{j} &{\overline U}'}
$$

Where the domains overlap,
this $j$ agrees with that previously defined on $N$, so, gluing the
two maps together, we have $j:K\to Q$. The components of $j$ are the required maps
$h$ and
$h'$. They extend the previously defined $h$ and $h'$. By Theorem \ref{lacher}, the
component
$h' (=\pi \circ j)$ can be as close as we please to $\widebar g$, and hence is homotopic to
it.
Thus, by Proposition \ref{cover}, $g$ is homotopic, rel $\del B$, to a map $h$ 
whose image misses ${\overline U}'$. 
\vskip 5pt
This completes the proof of Theorem \ref{weakly closed}, hence also of Theorem
\ref{weakly}.

\subsection{Pre-images of compact polyhedral sets} Here we prove Theorem
\ref{inverse}, relating $\Delta ({\upset A})$ to $f^{-1}(\upset A)$
when $A$ is a compact polyhedral subset of $X_1$.

\vskip 5pt
Let $d$ be a length metric on $X_1$. Define $\varphi \colon X_{1}\times (0,1]\to I$ by
$\varphi
(x,u)=\text{min}\left\{\frac{d(x,A)}{u},1\right\}$. Writing $\varphi _{u}$ for 
$\varphi (\cdot,u)$ we note

\begin{enumerate}[(i)]
\item $\varphi _{u}=0$ on $A$;
\item $\varphi _{u}=1$ on $X_{1}-N_{u}(A)$;
\item $0<\varphi _{u}<1$ elsewhere.
\end{enumerate}
\vskip 5pt
Let $B$ be a compact polyhedral subset of $X_2$.
For each $u \in (0, 1]$, define
$\psi _{u} \colon X_{1}\times B\times I\to X_{1}\times B\times I$ by 

\[ \psi _{u}(x,y,t)=\begin{cases} 
      (x,y,\varphi _{u}(x)) & \text{ if } t\leq \varphi _{u}(x) \\
      (x,y,t) & \text{ if } \varphi _{u}(x)\leq t.
   \end{cases}
\]

\noindent We note that $\psi _{u} $ fixes $A\times B\times I$ and that
$\psi _{u} (x,y,t)=(x,y,1)$ when $x\in X_{1}-N_{u}(A)$ and $y\in B$.
\vskip 5pt
We also note the homotopy
$h_{u} \colon X_{1}\times B\times I\times [0,1]\to X_{1}\times B\times I$
between $\psi _{u}$ and the identity map defined by

\[ h_{u}(x,y,t,v)=\begin{cases} 
      (x,y,v\varphi _{u}(x)) & \text{ if } t\leq v\varphi _{u}(x) \\
      (x,y,t) & \text{ if } v\varphi _{u}(x)\leq t
   \end{cases}
\]

\noindent and that the image of $\psi _{u}$ is the subset
$\{(x,y,t) \in X_{1}\times B\times I\mid \varphi _{u}(x)\leq t\leq1\}$.
\vskip 5pt
Factoring by the canonical quotient $X_{1}\times B\times I\to X_{1}\ast
B$ we see that, while $\psi _{u}$ does not induce a well-defined map $X_{1}\ast
B\to X_{1}\ast B$, it does induce a map 
$\Psi _{u} \colon (X_{1}\ast B)-(X_{1}-A)\to X_{1}\ast B$. Similarly, the homotopy
$h_u$
induces a homotopy $H_{u} \colon (X_{1}\ast B)-(X_{1}-A))\times [0,1]\to X_{1}\ast
B$.
\vskip 5pt
The image of $\Psi _{u}$ is the compact set
$$I_{u} \coloneqq \{(1-t)x+ty\in X_{1}\ast B\mid x\in X_{1}, y\in B, t\geq \varphi
_{u}(x)\}$$.
\vskip 5pt
\begin{prop} Given $u$ in $(0,1]$ and a neighborhood $M$ of $N_{u}(A)\ast B$ in
$X_{1}\ast
X_{2}$, we can deform $(X_{1}\ast B)-(X_{1}-A)$ within itself onto the
compact set $I_{u}\subseteq M$ by a deformation that is continuous in the variable $u$.   
\end{prop}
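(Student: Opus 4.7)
The plan is to show that the homotopy $H_u$ already constructed just before the statement \emph{is} the required deformation, and to verify the relevant properties: $I_u\subseteq M$, the homotopy is well-defined and stays inside $(X_1\ast B)-(X_1-A)$, it starts at the identity and ends at $\Psi_u$ with image $I_u$, and it varies continuously in the parameter $u\in(0,1]$.

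I would first dispose of the containment $I_u\subseteq M$. If $x\in X_1-N_u(A)$ then $\varphi_u(x)=1$, so a point $(1-t)x+ty\in I_u$ must have $t=1$, giving a pure join point $y\in B\subseteq N_u(A)\ast B$. If $x\in N_u(A)$ then $(1-t)x+ty\in N_u(A)\ast B$ by definition of the join. Hence $I_u\subseteq N_u(A)\ast B\subseteq M$. Next, $H_u(\cdot,0)$ is the identity by construction, and $\Psi_u=H_u(\cdot,1)$ replaces the join coordinate $t$ by $\max(t,\varphi_u(x))$, so its image is exactly $\{(1-s)x+sy : s\geq\varphi_u(x)\}=I_u$.

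To see that each slice preserves the domain $(X_1\ast B)-(X_1-A)$, note that the product-level formula raises the $t$-coordinate to $\max(t,v\varphi_u(x))\geq t$. Since $\varphi_u(x)=0$ precisely when $x\in A$, a point with $x\in A$ stays in $A\subseteq X_1$, while a point with $x\notin A$ and $t>0$ retains a strictly positive $t$-coordinate and therefore remains an honest join point, not a pure $X_1$-point. Joint continuity in $(x,y,t,v,u)$ follows because $(x,u)\mapsto\min\{d(x,A)/u,1\}$ is continuous on $X_1\times(0,1]$, the two pieces of the piecewise definition of $h_u$ agree on their common boundary $\{t=v\varphi_u(x)\}$, and composition with the (continuous) quotient $X_1\times B\times I\to X_1\ast B$ is continuous.

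The main obstacle is the well-definedness of the descent to the join, and it is also what forces the removal of $X_1-A$ from the domain. The join identifies $(x,y,0)\sim(x,y',0)$, but $h_u$ sends $(x,y,0)$ to $(x,y,v\varphi_u(x))$, a point that depends on $y$ whenever $\varphi_u(x)>0$, i.e.\ whenever $x\notin A$. Deleting $X_1-A$ from the domain removes precisely the problematic identifications, and on what remains the induced family $\{H_u\}_{u\in(0,1]}$ is a well-defined, continuous deformation of $(X_1\ast B)-(X_1-A)$ within itself onto $I_u$, varying continuously in $u$, as required.
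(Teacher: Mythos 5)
Your proof is correct and takes essentially the same approach as the paper, which states the proposition without a separate proof because the construction of $\varphi_u$, $\psi_u$, $h_u$, and $H_u$ just before it is meant to be self-evidently the argument; you have usefully made explicit the verifications (containment of $I_u$ in $M$, well-definedness of the descent to the join, stability of the domain, and continuity in $u$) that the paper leaves implicit.
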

By careful choice of the metric $d$ it can be arranged that for $u=\frac{1}{n}$ the above
functions are PL, and in particular that $I_{\frac{1}{n}}$ is polyhedral. Thus we get:
\begin{prop} Given $u=\frac{1}{n}$ and a neighborhood $M$ of $N_{u}(A)\ast B$ in
$X_{1}\ast X_{2}$, we can deform $(X_{1}\ast B)-(X_{1}-A)$ within itself onto the 
compact polyhedral set $I_{\frac{1}{n}}\subseteq M$.   
\end{prop}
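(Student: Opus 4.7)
The previous Proposition already supplies a deformation of $(X_{1}\ast B)-(X_{1}-A)$ within itself onto $I_{u}\subseteq M$, depending continuously on $u\in(0,1]$; the only additional content here is that when the metric on $X_1$ is chosen in a PL-compatible way and $u=1/n$, the compact image $I_{1/n}$ is actually polyhedral in $X_{1}\ast B$. My plan is therefore to construct a piecewise linear substitute $\tilde\varphi_n$ for $\varphi_{1/n}$ enjoying the same three bulleted properties (vanishing on $A$, equal to $1$ outside a small neighborhood of $A$, strictly between $0$ and $1$ elsewhere), feed it into the constructions of $\psi_u$, $h_u$, and $I_u$ unchanged, and then observe that the resulting $I_{1/n}$ is polyhedral by general PL nonsense.

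First, since $A$ is a compact polyhedral subset of the polyhedron $X_1$, choose a triangulation $K$ of $X_1$ (compatible with its PL structure) in which $A$ is a subcomplex, together with the associated length metric $d$. Passing to a sufficiently fine subdivision $K^{(n)}$, let $R_n$ be a closed PL regular neighborhood of $A$ in $K^{(n)}$, arranged so that $R_n\subseteq N_{1/n}(A)$ and so that the frontier of $R_n$ in $X_1$ is a subcomplex. The standard collapse of a regular neighborhood supplies a PL function $\tilde\varphi_n\colon X_1\to [0,1]$ which is $0$ on $A$, $1$ off $R_n$, and strictly between on $R_n\setminus A$; concretely, one defines $\tilde\varphi_n$ on the vertices of a suitable subdivision of $K^{(n)}$ (setting the value to $0$ on the vertices in $A$, to $1$ on vertices outside $R_n$, and to intermediate values on intermediate dual cells) and extends linearly over each simplex.

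Next, substitute $\tilde\varphi_n$ for $\varphi_{1/n}$ throughout the formulas defining $\psi_u$, $h_u$, $\Psi_u$, $H_u$, and $I_u$. None of the arguments establishing that these formulas give well-defined continuous maps, nor the homotopy between $\psi_u$ and the identity, actually used anything about $\varphi_u$ beyond the three bulleted properties, so the statement of the previous Proposition continues to hold verbatim with $\tilde\varphi_n$ in place of $\varphi_{1/n}$. In particular the resulting deformation maps $(X_1\ast B)-(X_1-A)$ onto the compact set
\[ I_{1/n} = \{(1-t)x+ty \in X_1\ast B \mid t\geq \tilde\varphi_n(x)\}. \]

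The key payoff is now immediate: the preimage of $I_{1/n}$ in the polyhedron $X_1\times B\times I$ is the set $\{(x,y,t) : t\geq \tilde\varphi_n(x)\}$, which is defined by a PL inequality and hence is a closed polyhedral subset. Since the quotient map $X_1\times B\times I\to X_1\ast B$ is itself PL on the subpolyhedron where it is a homeomorphism onto its image (namely $t\in(0,1)$ together with the two end pieces), the image $I_{1/n}$ is polyhedral in $X_1\ast B$. Finally, $I_{1/n}\subseteq R_n\ast B \subseteq N_{1/n}(A)\ast B\subseteq M$, so the deformation lands inside $M$ as required. The main obstacle in executing this plan is purely technical: carrying out the PL regular-neighborhood construction cleanly enough that $\tilde\varphi_n$ really is piecewise linear with the frontier of $R_n$ lying inside $N_{1/n}(A)$; the analytic continuity arguments of the preceding Proposition then transfer without change.
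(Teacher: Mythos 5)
Your proposal is correct and matches the paper's (very brief) justification: arrange for $\varphi_{1/n}$, or a substitute satisfying the same three bulleted properties, to be PL, so that the preimage $\{(x,y,t): t\geq\varphi(x)\}$ in $X_1\times B\times I$ and hence its image $I_{1/n}$ are polyhedral. The only difference is cosmetic — the paper speaks of choosing the metric $d$ carefully so that the distance function itself is PL, whereas you directly build a PL replacement $\tilde\varphi_n$ from a regular neighborhood collapse; your version is arguably cleaner since it sidesteps any worry about whether $d(\cdot,A)$ can actually be made PL by adjusting the length metric.
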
 
\vskip 5pt
Since the order relation ${\mathcal P}$ is polyhedral, this proposition can be restricted as
follows, where we
now take $B$ to be  $A'$ (compact and polyhedral by Proposition \ref{poly}):
\vskip 5pt
\begin{cor}\label{polyneigh} Given $u=\frac{1}{n}$ and a neighborhood $M$ of 
$\Delta (\upset A)$ in $\Delta ({\mathcal X})$, $f^{-1}(\upset A)$
can be deformed within itself onto a compact polyhedral set $J_{n}\subseteq M$.   
\end{cor}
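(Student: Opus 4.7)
The strategy is to specialize the preceding proposition to $B := A'$, which is compact by Proposition~\ref{upcompact} and polyhedral by Proposition~\ref{poly}, and then cut its deformation down to $\Delta({\mathcal X})$. Unpacking the definitions of $f$ and of $\upset A = A\cup A'$ (recall $A\subseteq X_1$, so elements of $X_1$ are minimal), a direct case-check on pure and mixed points of the join yields the identification
$$f^{-1}(\upset A) \;=\; \bigl((X_{1}*A')-(X_{1}-A)\bigr)\,\cap\,\Delta({\mathcal X}),$$
so $f^{-1}(\upset A)$ sits inside the domain of the deformation $\Psi_u$ produced by the preceding proposition.

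The key observation is that both $\Psi_u$ and its homotopy $H_u$ to the identity are induced from maps on $X_1\times X_2\times I$ that modify only the join parameter $t$ while leaving the coordinates $(x,y)\in X_1\times X_2$ fixed. The closed polyhedral condition $(x,y)\in P$ that carves $\Delta({\mathcal X})$ out of $X_1*X_2$ is therefore automatically preserved, and $\Psi_u$, $H_u$ restrict to a self-deformation of $f^{-1}(\upset A)$ whose image is $J_n := I_{1/n}\cap \Delta({\mathcal X})$. Since $I_{1/n}$ is compact polyhedral by the preceding proposition and $\Delta({\mathcal X})$ is a polyhedral subspace of $X_1*X_2$ (from polyhedrality of $P$ together with the extended use of ``polyhedral'' for joins noted in Section~\ref{alpha}), the intersection $J_n$ is compact polyhedral.

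It remains to arrange $J_n\subseteq M$. Given the open neighborhood $M$ of $\Delta(\upset A)$ in $\Delta({\mathcal X})$, write $M = M_0\cap \Delta({\mathcal X})$ with $M_0$ open in $X_1*X_2$, and set
$$\widetilde M \;:=\; M_0 \,\cup\, \bigl((X_1*X_2)-\Delta({\mathcal X})\bigr),$$
which is open in $X_1*X_2$ because $P$ is closed. The set $\Delta(\upset A) = (A*A')\cap \Delta({\mathcal X})$ is compact, and continuity of $\pi_{1,2}$ (Property~A2) guarantees that whenever $x\in N_u(A)$ and $x<y$ there exist $a\in A$ near $x$ and $y'$ near $y$ with $a<y'$; hence for $u=1/n$ small enough every point of $(N_u(A)*A')\cap \Delta({\mathcal X})$ lies within any preassigned tubular neighborhood of $\Delta(\upset A)$, and in particular in $M$. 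This forces $N_u(A)*A'\subseteq \widetilde M$, and applying the preceding proposition to $\widetilde M$ yields $I_{1/n}\subseteq \widetilde M$, so $J_n = I_{1/n}\cap \Delta({\mathcal X})\subseteq \widetilde M \cap \Delta({\mathcal X}) = M$. The only mildly delicate step is this last compactness argument, namely controlling how far the horizontal thickening $N_u(A)$ can perturb points of $\Delta({\mathcal X})$ away from $\Delta(\upset A)$; everything else is direct bookkeeping on top of the preceding proposition.
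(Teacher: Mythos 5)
Your proposal is correct and follows the same basic strategy the paper intends: specialize the preceding proposition with $B=A'$, note that the deformation $\Psi_u$ (and $H_u$) only alters the join parameter $t$ and so preserves the incidence condition $x<y$, hence restricts to $f^{-1}(\upset A)$, and take $J_n = I_{1/n}\cap\Delta({\mathcal X})$. You supply considerably more detail than the paper's one-sentence proof, most usefully the explicit identification of $f^{-1}(\upset A)$ inside the join and the argument (via $\widetilde M$ together with Property A2 and compactness of $A$ and $A'$) that $J_n\subseteq M$ once $u=1/n$ is small; the latter point the paper leaves entirely implicit.
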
 
\begin{proof} The set  $f^{-1}(\upset A)$ consists of $\Delta (\upset
A)$ together with line segments ending in $A'$ whose intersections with
$X_{1}$ have been deleted. Here, we write $J_n$ for the relevant part
of  $I_{\frac{1}{n}}$.  \end{proof}
\vskip 5pt
We note that $J_{n+1}$ is a strong deformation retract of $J_n$.
\vskip 5pt
\begin{lemma}\label{delta} The set $\Delta (\upset A)$ is a compact polyhedral subset of
$X_{1}*X_{2}$.
\end{lemma}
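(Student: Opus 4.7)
The plan is to identify $\Delta(\upset A)$ with a double mapping cylinder built from known compact polyhedra and then invoke a standard PL fact.

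Set $Q := (A \times A') \cap P$. In the case $\mathcal{R} = [2]$, a point of $\Delta(\upset A) \subseteq A * A'$ is either a point of $A$, a point of $A'$, or of the form $(1-t)x + ty$ with $(x,y) \in Q$ and $0 < t < 1$. Thus $\Delta(\upset A)$ is precisely the image of $Q \times I$ under the canonical join quotient map $q \colon X_1 \times X_2 \times I \to X_1 * X_2$, and it is canonically homeomorphic to the double mapping cylinder $M(p_1, p_2)$ of the projections $p_1 \colon Q \to A$ and $p_2 \colon Q \to A'$, where $[q,t] \in M(p_1, p_2)$ corresponds to $(1-t)p_1(q) + tp_2(q)$. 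Note that $p_1$ and $p_2$ are surjective: by Property A1, every $x\in A$ admits a successor in $A^{(2)} = A'$, and by definition every $y \in A'$ admits a predecessor in $A$.

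Next I would check that $Q$ is a compact polyhedron. By hypothesis $A$ is compact polyhedral in $X_1$, and by Proposition \ref{poly} so is $A' = A^{(2)}$ in $X_2$; hence $A \times A'$ is compact polyhedral. The Polyhedral Diagonal Property makes $P$ a subpolyhedron of $\mathcal{X} \times \mathcal{X}$, so $Q = (A \times A') \cap P$ is the intersection of a compact polyhedron with a subpolyhedron, hence a compact polyhedron. The projections $p_1$ and $p_2$ are restrictions of the linear coordinate projections and are therefore PL maps between compact polyhedra. Now invoke the standard PL fact that the double mapping cylinder of two PL maps between compact polyhedra is itself a compact polyhedron: triangulate $Q$, $A$, $A'$ so that $p_1$ and $p_2$ are simplicial, after which $M(p_1, p_2)$ acquires a natural triangulation. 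This equips $\Delta(\upset A) \cong M(p_1, p_2)$ with a compact polyhedral structure, and its natural inclusion into $X_1 * X_2$ (going through $A * A'$) realizes it as the polyhedral subset advertised in the statement.

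The main (minor) obstacle is the formal status of ``polyhedral subset of $X_1 * X_2$'' when $X_1 * X_2$ need not be locally compact. This is exactly the situation covered by the paper's explicit extension of the word ``polyhedral'' in Remark 3.1(5), and since $\Delta(\upset A)$ is compact and lies inside the compact sub-join $A * A'$, the standard PL theory applies without further subtlety.
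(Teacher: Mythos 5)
Your proposal is correct and takes essentially the same approach as the paper: the paper likewise realizes $\Delta(\upset A)$ as the union of $Q\times I$ with the mapping cylinders of the two coordinate projections $Q\to A$ and $Q\to A'$ glued at the ends, which is exactly the double mapping cylinder $M(p_1,p_2)$ you describe, and invokes the same PL facts about mapping cylinders and gluings of compact polyhedra. Your explicit surjectivity check via Property A1 and the appeal to the paper's Remark on joins of polyhedra are reasonable touches that the paper leaves implicit.
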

\begin{proof}
By Lemmas \ref{upcompact} and \ref{poly}, $A'$ is compact and
polyhedral. Thus $C \coloneqq ((A\times A')\bigcap {\mathcal P})\times I$ is
compact and polyhedral. The projection maps $A\times A'\to A$ and $A\times
A'\to A'$ define PL maps $\pi _{A} \colon ((A\times A')\bigcap {\mathcal P})\times
\{0\}\to
A$ and $\pi _{A'} \colon ((A\times A')\bigcap {\mathcal P})\times \{1\}\to A'$. The
space
$\Delta (\upset A)$ is clearly decomposable as the union of three compact
polyhedra: $C$ and the mapping cylinders of $\pi _{A}$ and $\pi _{A'}$,
where the mapping cylinders are glued to the $0$- and $1$- ends of $C$
in the obvious way. Since mapping cylinders of PL maps between compact polyhedra
are compact polyhedra, and the union of compact polyhedra glued along compact
polyhedral
subsets is again a compact polyhedron, this completes the proof.
\end{proof}
\vskip 5pt
\begin{thm}\label{inverse}
When $A$ is a compact polyhedral subset of $X_1$, the inclusion map $\Delta (\upset A)
\hookrightarrow f^{-1}(\upset A)$ is a homotopy equivalence.
\end{thm}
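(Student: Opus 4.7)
The plan is in two steps. First, apply Corollary~\ref{polyneigh} to deform $f^{-1}(\upset A)$ onto a compact polyhedron $J_n$ lying in any prescribed neighborhood of $\Delta(\upset A)$. The underlying map $\psi_u$ is the identity on $A\times B\times I$, and since $\Delta(\upset A)\subseteq A\ast A'$, the induced deformation $\Psi_u$ fixes $\Delta(\upset A)$ pointwise. Thus $\Delta(\upset A)\subseteq J_n$ and the inclusion $J_n\hookrightarrow f^{-1}(\upset A)$ is a homotopy equivalence, with homotopy inverse the retraction just described.

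Second, I would show that the inclusion $\Delta(\upset A)\hookrightarrow J_n$ is a homotopy equivalence. Structurally, $J_n$ is $\Delta(\upset A)$ augmented by ``tentacles'': for each $x\in N_{1/n}(A)\setminus A$ with $x<y$ for some $y\in A'$, the partial segment $\{(1-t)x+ty:\varphi_{1/n}(x)\leq t\leq 1\}$ belongs to $J_n$, attached to $\Delta(\upset A)$ only through its apex $y\in A'$. The idea is to construct a cell-like map $q\colon J_n\to\Delta(\upset A)$ whose fiber over each $y\in A'$ is the contractible cone of tentacles attached to $y$, and whose fibers over the remaining points of $\Delta(\upset A)$ are singletons. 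Theorem~\ref{lacher2} then yields that $q$ is a proper homotopy equivalence, and combining with the retraction of Step~1 gives the theorem.

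The main obstacle is the continuity of $q$ near the boundary $\partial A$ of $A$ in $X_1$. The naive definition ($q$ is the identity on $\Delta(\upset A)$ and sends tentacle points to their apex $y$) fails continuity: a sequence of tentacle points $(1-t)x_k+ty$ with $x_k\notin A$ and $x_k\to x_0\in\partial A$ converges to $(1-t)x_0+ty\in\Delta(\upset A)$, while the $q$-images of the tentacle points are all $y$, which does not converge to $(1-t)x_0+ty$ unless $t=1$. Overcoming this requires modifying $q$ in a collar of $\partial A$ inside $\Delta(\upset A)$ so as to interpolate smoothly between the apex-projection (on tentacles) and the identity (on $\Delta(\upset A)$ away from $\partial A$). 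The polyhedral structure afforded by Lemma~\ref{delta} and Proposition~\ref{poly} should permit such a modification via a regular neighborhood of $A$ in $X_1$, and Property A2 ensures the continuity of how predecessors in $A$ vary with $y\in A'$; verifying that the fibers of the modified $q$ retain trivial shape is where most of the technical work will lie.
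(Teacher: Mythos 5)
Your Step~1 agrees with the paper: $J_n$ is a strong deformation retract of $f^{-1}(\upset A)$ by Corollary~\ref{polyneigh}, so the inclusion $J_n\hookrightarrow f^{-1}(\upset A)$ is a homotopy equivalence. The divergence, and the gap, are in Step~2. You propose to construct a cell-like map $q\colon J_n\to\Delta(\upset A)$ with tentacle-cone fibers over points of $A'$ and singleton fibers elsewhere, and you correctly identify that the naive apex-projection is discontinuous along $\partial A$. But your proposed fix---interpolate in a collar of $\partial A$---is not carried through, and this is not a routine technicality. Once you push points of $\Delta(\upset A)$ near $\partial A$ toward $A'$, the fibers of the modified map are no longer cones over a base in $X_1$; they pick up pieces of the interior simplices of $\Delta(\upset A)$ as well, and there is no apparent structural reason these enlarged fibers should have trivial shape. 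Since the trivial-shape property of the fibers is the entire content needed to invoke Theorem~\ref{lacher2}, this is the heart of the argument, and it is left unverified. In short, Step~2 as written is a program, not a proof.

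The paper sidesteps the need for any map $J_n\to\Delta(\upset A)$ at all. It observes that $\Delta(\upset A)=\bigcap_n J_n$, that each inclusion $J_{n+1}\hookrightarrow J_n$ is a homotopy equivalence (both being strong deformation retracts of $f^{-1}(\upset A)$, with the retraction onto $J_{n+1}$ restricting to $J_n$), and crucially that $\Delta(\upset A)$ is itself a compact polyhedron by Lemma~\ref{delta}. Feeding the inverse system $J_1\leftarrow J_2\leftarrow\cdots$ into the shape-theoretic machinery of Appendix~\ref{appendix} shows that $\Delta(\upset A)$ is shape equivalent to each $J_n$; since shape-equivalent compact polyhedra are homotopy equivalent, and the equivalence is realized by the inclusion, the theorem follows. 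This is not only technically cleaner but also makes essential use of Lemma~\ref{delta}, which your proposal cites only incidentally. If you want to pursue a cell-like map, the place where it genuinely succeeds in this paper is Proposition~\ref{cone}, where the map $\Delta(\upset{\overline U})\to\mathcal{C}\overline U'$ is cell-like by Property~A3; there the map is globally defined with no boundary interpolation needed. The situation in the present theorem is different precisely because $A$ sits with a nontrivial frontier inside $X_1$, and that frontier is the obstruction you ran into.
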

\begin{proof}
$\Delta (\upset A)$ is the intersection of the nested sequence of polyhedra $J_n$, each of
which
is a strong deformation retract of $f^{-1}(\upset A)$ by Corollary \ref{polyneigh}.  
Since the inclusion 
maps $J_{n+1}\to J_n$ are homotopy equivalences, shape theory (see Section
\ref{appendix} for
details) implies that $\Delta (\upset A)$ is shape equivalent to each of the polyhedra
$J_n$.
Moreover, since $\Delta (\upset A)$ is itself polyhedral by Lemma \ref{delta}, this means
that
$\Delta (\upset A)$ is actually homotopy equivalent to each of the $J_n$, hence also to
$f^{-1}(\upset A)$.  
\end{proof}

\vskip 5pt

\subsection{Contractibility of pre-images of basis elements}\label{2.1}

The order complex is the subset $\Delta ({\mathcal X}) \subseteq X_{1}*X_{2}$  which
consists of $X_1$, $X_2$, and the union of all segments joining $x$ to $y$ such that
$x<y$.
The Comparison Map $f \colon \Delta ({\mathcal X})\to {\mathcal X}$ is defined by 
$f(x)=x$ when $x\in X_{1}$ (i.e. when $t=0$)  and $f((1-t)x+ty)=y$ when $t>0$.

As before, we consider the two kinds of basis elements in the Up topology:
$U\in {\mathcal U}_{2}$ and $\upset U$ where $U \in {\mathcal U}_{1}$. To
apply Theorem \ref{mccord} we show that the pre-image under $f$ of each
of these is weakly contractible.
\vskip 5pt
When $U\in {\mathcal U}_{2}$, the set $f^{-1}(U)$ consists of the contractible set 
$U\subseteq X_{2}\subseteq X_{1}*X_{2}$, together with half-open segments ending in
$U$.
Thus $U$  is a strong deformation retract of $f^{-1}(U)$ which is therefore contractible.
\vskip 5pt
Now we consider the case where $U\in {\mathcal U}_{1}$. It is convenient to first deal
with 
$f^{-1}(\upset {\overline U})$, which Theorem \ref{inverse} tells us is homotopy
equivalent to 
$\Delta (\upset {\overline U})$. 

\begin{notation} We write ${\mathcal C}A$ for the topological cone on the space $A$, 
i.e. $A\times I\slash A\times \{0\}$.
\end{notation}

\begin{prop}\label{cone} $\Delta (\upset {\overline U})$ is homotopy equivalent to
${\mathcal
C}\overline U'$, and is therefore contractible.
\end{prop}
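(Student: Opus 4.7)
The plan is to use the explicit decomposition of $\Delta (\upset \overline{U})$ provided by Lemma \ref{delta} together with the cell-like map theorems from Section \ref{tools}. Writing $Q := (\overline{U}\times \overline{U}')\cap P$, that lemma presents $\Delta (\upset \overline{U})$ as the double mapping cylinder of the diagram
$$\overline{U} \xleftarrow{\pi_{\overline{U}}} Q \xrightarrow{\pi_{\overline{U}'}} \overline{U}',$$
where $\pi_{\overline{U}}$ and $\pi_{\overline{U}'}$ are the two coordinate projections restricted to $Q$.

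First I would verify that $\pi_{\overline{U}'}\colon Q\to \overline{U}'$ is a proper cell-like map between compact polyhedra. Properness is free, since $Q$ is compact ($\overline{U}$ is compact by our choice of basis, $\overline{U}'$ is compact by Proposition \ref{upcompact}, and $P$ is closed). For each $y\in \overline{U}'$, the fiber $\pi_{\overline{U}'}^{-1}(y)$ is canonically homeomorphic to $\{x\in \overline{U}\mid x<y\}$, which has trivial shape by Additional Property A3. So by Theorem \ref{lacher2}, $\pi_{\overline{U}'}$ is a (proper) homotopy equivalence between compact polyhedra.

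Next I would collapse $\overline{U}$ to a point inside $\Delta (\upset \overline{U})$. Since $\overline{U}$ is contractible (by the Convenient Choice of Bases it is a PL cone on its frontier) and sits inside $\Delta (\upset \overline{U})$ as a subpolyhedron, the inclusion $\overline{U}\hookrightarrow \Delta (\upset \overline{U})$ is a cofibration, so the quotient map is a homotopy equivalence. The quotient space is, by inspection of the double mapping cylinder construction, precisely the mapping cone $\overline{U}'\cup_{\pi_{\overline{U}'}}\mathcal{C}Q$ of $\pi_{\overline{U}'}$.

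Finally I would invoke the standard fact that the mapping cone of a homotopy equivalence between nice spaces is homotopy equivalent to the mapping cone of an identity map, i.e.\ to $\mathcal{C}\overline{U}'$. Concretely, the naturality square
$$
\xymatrix{
Q \ar[r]^{\pi_{\overline{U}'}} \ar[d]_{\pi_{\overline{U}'}} & \overline{U}' \ar[d]^{\mathrm{id}}\\
\overline{U}' \ar[r]_{\mathrm{id}} & \overline{U}'
}
$$
induces a map of mapping cones whose restrictions to each factor are homotopy equivalences, hence is itself a homotopy equivalence between compact polyhedra. Chaining these equivalences together yields $\Delta (\upset \overline{U})\simeq \mathcal{C}\overline{U}'$; contractibility then follows since any cone is contractible. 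The main delicate point is the cell-like step, because it is where A3 enters and where we need Theorem \ref{lacher2}'s upgrade from cell-like to proper homotopy equivalence to move from a shape-theoretic hypothesis to an honest homotopy equivalence; the rest of the argument is essentially formal manipulation of mapping cylinders.
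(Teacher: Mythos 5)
Your proof is correct, and it shares the paper's key ingredient (Property A3 supplies trivial-shape fibers and Theorem \ref{lacher2} converts cell-like into a genuine homotopy equivalence between polyhedra), but the structure is genuinely different. The paper defines a single map $g\colon \Delta(\upset\overline{U})\to \mathcal{C}\overline{U}'$ directly, sending $\overline{U}$ to the cone point $p$, the identity on $\overline{U}'$, and $(1-t)x+ty\mapsto (1-t)p+ty$, and then inspects all of its point-preimages at once: $\overline{U}$ over the apex, a point over each base point, and $\{x\in\overline{U}\mid x<y\}$ over interior cone points; A3 and \ref{lacher2} then give the result in one stroke. You instead factor (what is in fact the same map) through the double mapping cylinder description of $\Delta(\upset\overline{U})$: you isolate the cell-like verification to the minimal possible map, namely the projection $Q\to\overline{U}'$, and then handle the apex fiber by a separate formal step (contractible subpolyhedron + cofibration gives a quotient equivalence onto the mapping cone of $\pi_{\overline{U}'}$), finishing with the gluing lemma to compare the mapping cone of a homotopy equivalence with the cone $\mathcal{C}\overline{U}'$. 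Your version is more modular and makes the role of A3 more visibly local; the paper's version is shorter because it bundles the apex case into the same cell-like check and avoids invoking the cofibration and mapping-cone-comparison machinery. Both are sound; note in your step on properness that the fibers of $\pi_{\overline{U}'}$ are nonempty precisely because $\overline{U}'=(\upset\overline{U})\cap X_2$ by definition, which is worth stating since an empty set does not have trivial shape.
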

\begin{proof} Let $g\colon\Delta (\upset {\overline U})\to {\mathcal
C}\overline U'$ be the map which takes $\overline U$ to the cone point $p$, is the
identity on $\overline U'$, and maps the point  $(1-t)x+ty$ to the
segment  $(1-t)p+ty$ in the cone. By Lemma \ref{delta} this is a map between compact
polyhedra. Consider $g^{-1}((1-t)p+ty)$. When
$t=0$ this pre-image is the contractible set $\overline U$; when $t=1$
this pre-image is a single point; when $0<t<1$ this pre-image is homeomorphic to $\{x\in
{\overline U}\mid x<y\}$ which has trivial shape (actually, is contractible) by Property
A3. 
Thus $g$ is a cell-like map, and is therefore a homotopy equivalence by Theorem
\ref{lacher2}.
\end{proof}
\vskip 5pt
\begin{cor}\label{contractible} $f^{-1}(\upset {\overline U})$ is contractible.  
\end{cor}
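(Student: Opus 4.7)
The plan is to combine the two main results that immediately precede the corollary. Since our convenient choice of bases guarantees that $\overline{U}$ is a compact contractible subpolyhedron of $X_1$, the set $A := \overline{U}$ satisfies the hypothesis of Theorem \ref{inverse}. Applying that theorem gives that the inclusion
\[
\Delta(\upset \overline{U}) \hookrightarrow f^{-1}(\upset \overline{U})
\]
is a homotopy equivalence.

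Next I would invoke Proposition \ref{cone}, which already identifies $\Delta(\upset \overline{U})$ up to homotopy with the topological cone ${\mathcal C}\overline{U}'$ (using Property A3 to make the collapse map cell-like, and Theorem \ref{lacher2} to turn the cell-like map into a homotopy equivalence). Since any cone is contractible, $\Delta(\upset \overline{U})$ is contractible.

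Chaining the two homotopy equivalences
\[
f^{-1}(\upset \overline{U}) \simeq \Delta(\upset \overline{U}) \simeq {\mathcal C}\overline{U}' \simeq \text{point}
\]
yields the desired contractibility of $f^{-1}(\upset \overline{U})$. There is no substantive obstacle here: all the technical work—the polyhedral approximation used to prove Theorem \ref{inverse}, the shape-theoretic argument identifying the nested intersection with its approximating polyhedra, and the cell-like collapse of the order complex onto the cone—has already been carried out. The corollary is simply the assembly of these two statements with $A = \overline{U}$.
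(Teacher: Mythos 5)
Your argument is correct and coincides exactly with the paper's own route: the text immediately preceding Proposition \ref{cone} invokes Theorem \ref{inverse} with $A=\overline U$ (which is compact polyhedral by the convenient choice of bases) to get $f^{-1}(\upset\overline U)\simeq\Delta(\upset\overline U)$, and Proposition \ref{cone} then supplies the contractibility of $\Delta(\upset\overline U)$. Nothing further is needed.
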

\vskip 5pt
Just as Proposition \ref{weakly closed} implies Theorem \ref{weakly},
this implies that (for basis elements $U$), $f^{-1} (\upset {U})$
is weakly contractible.  Since it is an open subset of a polyhedron,
it is actually contractible.
\vskip 5pt
Theorem \ref{main} for the case ${\mathcal R}=[2]$ now follows by combining
Theorem \ref{weakly closed}, Corollary \ref{contractible} and Theorem \ref{mccord}. 

\section{The special case ${\mathcal R}=[3]$}\label{3case}
\vskip 5pt
\begin{notation} In this section, when $A\subseteq X_{1}$ we write $A'$ rather than
$A^{(2)}$
and $A''$ rather than $A^{(3)}$.  
\end{notation}
\vskip 5pt 
\subsection{Weak contractibility of basis elements}\label{3weak}
Here ${\mathcal X}=X_{1}\coprod X_{2}\coprod X_{3}$ and $U\subseteq X_i$ is a
basis
element. We are to show that $\uparrow U$ is weakly contractible. If $U\subseteq X_3$
this is
trivial, and if $U\subseteq X_2$ this follows from what was done in Section \ref{2case}.
So we
assume $U\subseteq X_1$. 
\vskip 5pt
Once again, it is easier to begin with $\overline U$. We consider an arbitrary map
$$g \colon (B,\del B)\to (\upset {\overline U}, {\overline U}\bigcup {\overline U}').$$ 
\noindent where, once again, $B$ is an abbreviation of $B^n$. We write $C \coloneqq
g^{-1}({\overline U}\bigcup {\overline
U}')$. As before, this is a closed subset of $B$ containing $\del B$. We wish to produce
maps
$h''$
and $h$ from $B$ to $\upset {\overline U}$ satisfying:
\begin{enumerate}[(i)]

\item $h''=h=g$ on $C$;
\item $h''$ maps $B-C$ into ${\overline U}''$;
\item $h$ maps $B$ into ${\overline U}\bigcup {\overline U}'$;
\item $h''$ lies over $h$;
\item $h''$ is homotopic to $g$ rel $C$.
\end{enumerate}
\noindent Proposition \ref{cover} will then imply that $h''$ is homotopic to $h$ rel $C$
and hence that the map $g$ represents the trivial element of $\pi _{n}(\upset {\overline
U},
{\overline U}\bigcup{\overline U'})$; hence that group is trivial. By Proposition
\ref{weakly
closed} it will
follow that $\pi _{n}(\upset {\overline U}, {\overline U})$ is trivial.
As before, we can deduce the same when ${\overline U}$ is replaced by $U$.  
\vskip 5pt
The argument here is more complicated than in Section \ref{2case} because the obvious
replacement for ${\overline U}\times {\overline U}'$ in the definition of $Q$, namely
$({\overline U}\bigcup {\overline U}')\times {\overline U}''$, is not polyhedral (with the
Up
topology on ${\overline U}\bigcup {\overline U}'$). For this reason we must construct
the map
$h''$ in two stages, one for ${\overline U}$, the other for ${\overline U}'$.  
\vskip 5pt
Let $Q_{1} \coloneqq({\overline U}\times {\overline U}'')\bigcap {\mathcal P}$. As
with $Q$, this is a
compact polyhedron. Let ${\mathcal A}_{i}$ be an
open cover of $Q_{1}$
by sets of diameter $<\frac{1}{i}$, and let ${\mathcal B}_i$ be the
refinement given by Theorem \ref{hu2}. Let $4\lambda _i$ be a Lebesgue
number for ${\mathcal B}_i$.
\vskip 5pt
We write $C_{1} \coloneqq g^{-1}({\overline U})$; this is a closed subset of 
$C$. Define $Y \coloneqq\{x\in B-C\mid d(x,C)=d(x,C_{1})\}$, a closed subset of
$B-C$.
\vskip 5pt
We now proceed as in Section \ref{2.2} replacing $B-C$ by $Y$ and $C$ by $C_1$. The
triangulation $K$ and its subcomplex $J$ are replaced by a triangulation $K_1$
of $Y$ and its subcomplex $J_1$. We define $N_{1} \coloneqq J_{1}\bigcup C_{1}$, a
polyhedral
neighborhood of
$C_1$ in $Y$. We get a map $j_{1} \coloneqq(k_{1},k_{1}'')$; with components  
$k_{1}:J_{1}\to {\overline U}$ and $k_{1}'':J_{1}\to {\overline U}''$. Both of these
maps are
extended to agree with $g$ on $C_1$. As before, the extended function $j_{1}:N_{1}\to
Q_{1}$
is continuous.  
\vskip 5pt
We first define the maps $h$ and $h''$ on $N_1$, where they agree with $k_{1}$ and
$k_{1}''$
respectively. Then $h''$ lies over $h$ and $h=h''=g$ on $C_1$. As in Section \ref{2.2}
we may
assume $h''$ and $g|N_1$ are homotopic rel $C_1$. The extension of $h$ and $ h''$ to
the rest of
$K_1$ maintaining the properties (i)-(v) is achieved as in that section. We further extend
$h$ and
$h''$ to agree with $g$ on all of $C$. At this point, the Homotopy Extension Property
implies
that $g:B\to \upset {\overline U}$ is homotopic rel $C$ to a map $g_{1}:B\to \upset
{\overline
U}$, where $g_{1}$ agrees with $h''$ on $Y\bigcup C$.
\vskip 5pt
Now comes the part that is different from Section \ref{2.2}. We wish to alter $g_1$ rel 
$Y\bigcup C$ to have the desired covering property on the rest of $B-C$. The set 
$Z \coloneqq B-(C_{1}\bigcup Y)$ is an open polyhedral subset of $B$ whose frontier is
the compact set
${\overset \bullet Z} \coloneqq\text{\rm fr}_{B}(Z)$.
\vskip 5pt
\includegraphics[width=10cm]{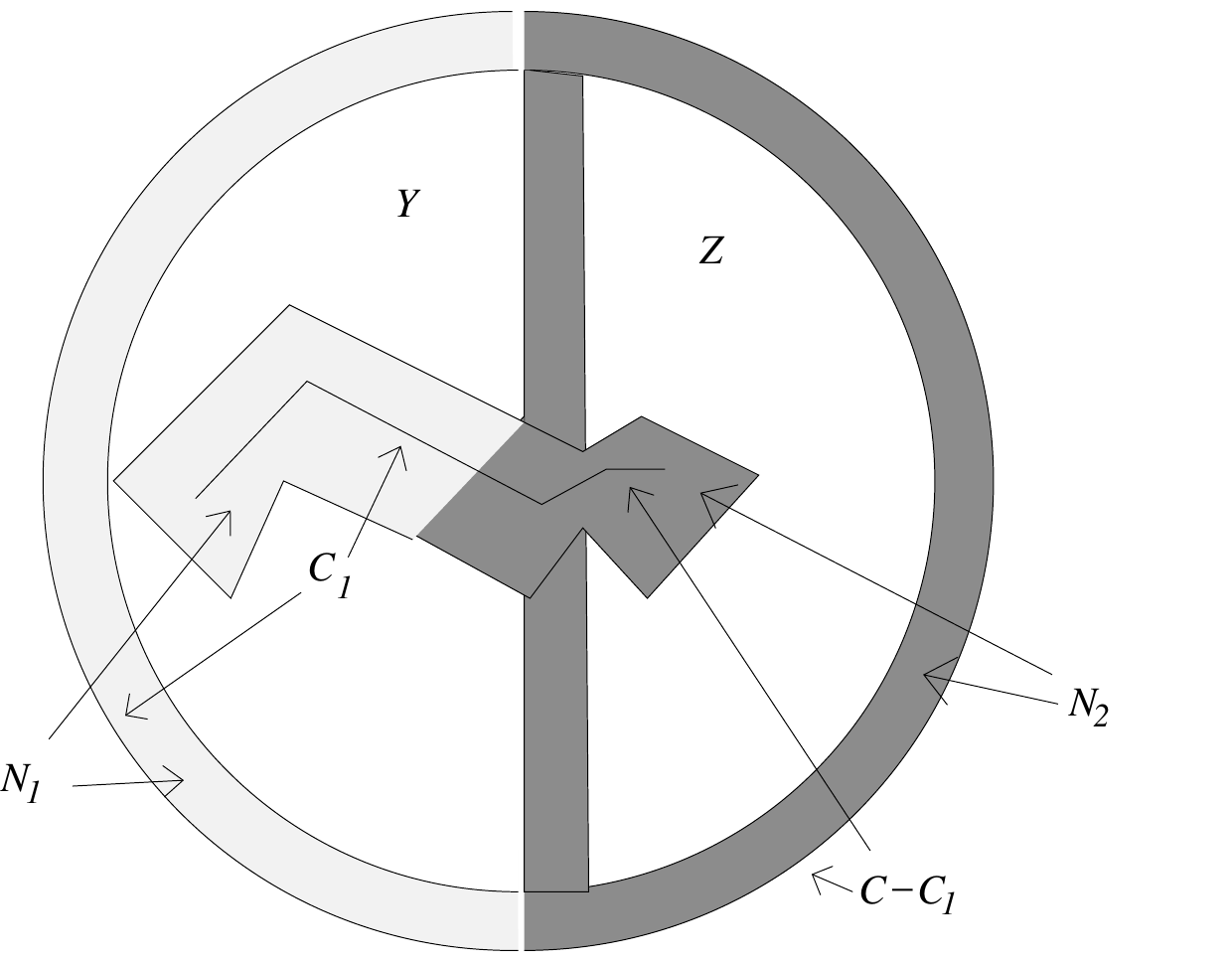}
\vskip 5pt
Basically, we repeat what has been described above, with $K_{2}$ (triangulating $Z$),
${\overset \bullet Z}\bigcup {\overline {C-C_{1}}}$, $g_1$, and 
$Q_{2} \coloneqq({\overline U}'\times {\overline U}'')\bigcap {\mathcal P}$ playing the
respective 
roles of $K_{1}$, $C_1$, $g$ and $Q_{1}$. There is a subcomplex $J_{2}$ of
$K_{2}$,
and a neighborhood $N_{2}$ of ${\overset \bullet Z}\bigcup {\overline {C-C_{1}}}$ in
$\overline Z$ playing the role
previously played by $J_1$ and $N_1$. We get a map $j_{2}=(k_{2}',k_{2}'')$ with
components  $k_{2}':J_{2}\to {\overline U}'$ and $k_{2}'':J_{2}\to {\overline U}''$. 
The construction (imitating what is given in detail in Section \ref{2.2})  means that this
map
$j_{2}$ extends continuously to ${\overline Z}\bigcup C$ agreeing with $j_{1}$ on 
${\overset \bullet Z}\bigcup C$. The extension to
the rest of $Z$ works as before. Thus we get maps  $h$ and $h''$ defined on all of $B$
satisfying
(i)-(v). This shows that $g$ is homotopic, rel $\del B$, to a map whose image misses
${\overline
U}''$.
Summarizing:
\vskip 5pt
\begin{thm}For all $n$, $\pi _{n}(\upset {\overline U}, {\overline U}\bigcup{\overline
U'})$ is
trivial.
\end{thm}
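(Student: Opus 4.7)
The plan is to follow the roadmap outlined just before the statement. Given an arbitrary map $g\colon (B,\partial B)\to (\upset \overline{U},\overline{U}\coprod \overline{U}')$, write $C := g^{-1}(\overline{U}\coprod \overline{U}')$ and $C_1 := g^{-1}(\overline{U})$. I will construct maps $h''$ and $h$ on $B$ satisfying the five listed properties, so that Proposition \ref{cover} forces $h \simeq h''$ rel $C$ and hence $g$ represents the trivial element of $\pi_n(\upset \overline{U},\overline{U}\coprod \overline{U}')$.

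The central new difficulty, as the excerpt already notes, is that the naive analog $((\overline{U}\coprod \overline{U}')\times \overline{U}'')\cap P$ is not polyhedral in the Up topology, so the single-pass construction of Section \ref{2.2} does not apply directly. My strategy is to partition $B-C$ into two closed polyhedral regions
$$Y := \{x\in B-C\mid d(x,C)=d(x,C_1)\},\qquad Z := B-(C\cup Y),$$
and to run the Section \ref{2.2} machinery separately on each region, with two different polyhedral auxiliary spaces: $Q_1 := (\overline{U}\times \overline{U}'')\cap P$ on $Y$, and $Q_2 := (\overline{U}'\times \overline{U}'')\cap P$ on $Z$. On $Y$ the nearest-point retraction into $C$ lands in $C_1$, so I can imitate Section \ref{2.2} verbatim with the triples $B-C$, $C$, $Q$ replaced by $Y$, $C_1$, $Q_1$: triangulate $Y$ compatibly with the annuli defined by $d(\cdot,C_1)$, use Property A2 and Lemma \ref{uly} to construct a $0$-skeletal partial realization into $Q_1$, extend over a subcomplex $J_1$ via Theorem \ref{hu2}, and complete over all of $Y$ through the cell-like projection $Q_1\to \overline{U}''$ (whose fibres have trivial shape by Property A3) and Theorem \ref{lacher}. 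The continuity of the resulting $j_1=(k_1,k_1'')$ across $C_1$ follows exactly as in Proposition \ref{continuous}. After a rel-$C$ homotopy furnished by Theorem \ref{hu1}, I may replace $g$ by a map $g_1$ whose restriction to $Y$ already covers a map into $\overline{U}$.

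Step 2 repeats this construction on $Z$, now with ${\overset \bullet Z} := \mathrm{fr}_B(Z)$ playing the role of $C_1$ and with $g_1$, $Q_2$ replacing $g$, $Q_1$, producing $j_2=(k_2',k_2'')$ and the corresponding extensions of $h''$ and $h$ over $Z$. The hard part will be verifying continuity of the combined lift along ${\overset \bullet Z}$, and in particular at points of $C-C_1 \subseteq {\overset \bullet Z}$ where $g$ takes values in $\overline{U}'$: the $\overline{U}''$-component built on the $Z$-side must match $k_1''$ inherited from the $Y$-side, and the $\overline{U}'$-component must agree with $g$. A careful imitation of Proposition \ref{continuous}, using Lemma \ref{relation}, Lemma \ref{yznearx} and Property A1 for the $\overline{U}''$-coordinate, and continuity of $g$ for the $\overline{U}'$-coordinate, will supply the needed continuity. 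Once $h''$ and $h$ are assembled, property (v) is arranged by Theorem \ref{hu1} as in Section \ref{2.2}, and Proposition \ref{cover} completes the argument. The passage from $\overline{U}$ to $U$, if desired, is handled exactly as in Corollary \ref{open}.
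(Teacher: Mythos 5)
Your proposal follows the paper's proof essentially verbatim: the same decomposition of $B-C$ into $Y=\{x\mid d(x,C)=d(x,C_1)\}$ and $Z=B-(C\cup Y)$, the same auxiliary polyhedra $Q_1=(\overline U\times\overline U'')\cap P$ and $Q_2=(\overline U'\times\overline U'')\cap P$, the same two-pass application of the Section \ref{2.2} machinery (Theorems \ref{hu1}, \ref{hu2}, \ref{lacher}) first on $Y$ rel $C_1$ and then on $Z$ rel ${\overset\bullet Z}$, and the same final appeal to Proposition \ref{cover}. Your explicit flagging of the matching condition along $C-C_1\subseteq{\overset\bullet Z}$ is a point the paper treats somewhat tersely, but it is the same argument.
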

\vskip 5pt
As in Section \ref{2.2} this leads to:
\begin{cor}\label{open3}
The set $\upset {U}$ is weakly contractible.
\end{cor}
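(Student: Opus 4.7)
The plan is to imitate the proof of Corollary \ref{open}, reducing weak contractibility of $\upset U$ to weak contractibility of $\upset \overline V$ for a suitable cofinal family of smaller basis elements $V$ whose closures sit inside $U$. The key input is already in hand: by the preceding Theorem together with Theorem \ref{2stage} applied in the three-level setting, $\pi_n(\upset \overline V,\overline V)=0$ and $\pi_n(\overline V)=0$ (since $\overline V$ is contractible in our chosen basis), so the long exact sequence of the pair forces $\pi_n(\upset \overline V)=0$ for every $n$ and every basis element $V\subseteq X_1$.

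Next, I would exhaust $U$ by a nested sequence of basis elements $V_{1}\subseteq V_{2}\subseteq \cdots$ in $\mathcal U_1$ with $\overline{V_{k}}\subseteq V_{k+1}$ and $\bigcup_{k}\overline{V_{k}}=U$; this is possible because basis elements are PL cones on their frontiers, so one may take concentric smaller cones. Since $\upset$ preserves unions of sets,
\[
\upset U \;=\; \bigcup_{k}\upset V_{k} \;=\; \bigcup_{k}\upset \overline{V_{k}},
\]
and the inclusions $\upset \overline{V_{k}}\hookrightarrow \upset\overline{V_{k+1}}\hookrightarrow \upset U$ are maps of spaces that are weakly contractible (for the smaller pieces) by the input above.

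To conclude weak contractibility of $\upset U$, let $\varphi\colon S^{n}\to \upset U$ be any map. Its image is compact in $\upset U$. The hard point, and the only one needing care, is that $\upset U$ carries the Up topology, so compactness of $\varphi(S^n)$ does not come from metric considerations on $\mathcal X$ itself. However, each basic open set $\upset V_{k}$ is open in $\upset U$, and the collection $\{\upset V_k\}$ is an open cover of $\upset U$ by the display above; compactness of $\varphi(S^n)$ forces $\varphi(S^{n})\subseteq \upset V_{k}\subseteq \upset\overline{V_{k}}$ for some $k$. Since $\upset\overline{V_{k}}$ is weakly contractible, $\varphi$ extends to a map $B^{n+1}\to \upset\overline{V_{k}}\subseteq \upset U$, so $\varphi$ is null-homotopic in $\upset U$.

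Thus $\pi_{n}(\upset U)=0$ for every $n$, which is the claim. The only step that warrants verification (and which I expect to be the main technical obstacle) is the compactness-versus-cover argument in the Up topology, namely that a continuous image of $S^n$ in $\upset U$ actually sits inside some $\upset V_{k}$; this is precisely the point where the Openness Property and the cofinality of $\{\overline{V_k}\}$ in $U$ must be combined.
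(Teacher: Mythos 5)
Your proof is correct and follows essentially the same route as the paper. The paper deduces $\pi_n(\upset\overline U,\overline U)=0$ (hence $\pi_n(\upset\overline U)=0$) from the theorem at the end of Section \ref{3weak} together with Theorem \ref{2stage} applied to the sub-poset $X_1\coprod X_2$, and then proves Corollary \ref{open3} by the same exhaustion argument as in Corollary \ref{open}: choose nested basis elements $V_k$ with $\overline{V_k}\subseteq V_{k+1}$ and $\bigcup_k\overline{V_k}=U$, note $\upset U=\bigcup_k\upset V_k$, and use compactness of the image of a singular sphere together with weak contractibility of $\upset\overline{V_k}$. The compactness-versus-cover step you flag as the potential obstacle is in fact routine --- the image of $S^n$ is compact because $\varphi$ is continuous, the sets $\upset V_k$ form a nested open cover of $\upset U$ in the Up topology by the Openness Property, and compactness forces containment in one of them --- and the paper treats it exactly as you do, in a single line.
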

\vskip 5pt
\begin{rem} It was important to handle ${\overline U}$ before ${\overline
U}'$ since we needed $C_1$ to be compact. This is a simple instance of
what is a more serious issue in the general case, treated in Section \ref{general}, where
we need
a careful ordering of $\mathcal R$ in order to ensure that at each stage
we are dealing with a compact set.  
\end{rem}


\subsection{Contractibility of pre-images of basis elements}\label{3.1}

As a special case of what was defined in Section \ref{order}, recall that the order
complex is the
set $\Delta ({\mathcal X}) \subseteq X_{1}*X_{2}*X_{3}$
consisting of all points $z=\sum\limits_{i=1}^{3}t_{i}x_{i}$ such that when 
$i,j\in \text{supp }z$ then $x_{i}<x_{j}$ in $\mathcal X$. In particular, every $x_i$ lies
in this
set. The Comparison Map $f \colon \Delta ({\mathcal X})\to {\mathcal X}$ is defined by 
$f(z)=\text{max }\{x_{i}\mid i\in \text{supp }(z)\}$. 
\vskip 5pt

We first consider $U\subseteq X_1$. We describe the set $f^{-1}(\upset {\overline
U})\subseteq
X_{1}*X_{2}*X_{3}$ in some detail. Certainly it  includes $\Delta (\upset {\overline
U})$.
Imitating what was done in Section \ref{2.1}, we let $R^{o}$ be the union of a
certain collection of half-open segments, where the missing point of
each segment is its initial point as measured by the poset ${\mathcal
R}=[3]$. These segments are described as follows. In every case it is to be understood
that $x_1<x_2<x_3$ whenever these
comparisons make sense\footnote{{\it Notation:} $[x,y]$ stands for all the points
$(1-t)x+ty$ on
the segment joining $x$ to $y$.}:

\begin{enumerate}[(i)]
\item $(x_{1}, x_{2}]$ where $x_{1}\notin {\overline U}, x_{2}\in {\overline
U}^{(2)}$, and
$x_{2}$ is not comparable to any member of ${\overline U}^{(3)}$;
\item $(x_{1}, x_{3}]$ where $x_{1}\notin {\overline U}, x_{3}\in {\overline
U}^{(3)}$, and
there is no $x_{2}\in {\overline U}^{(2)}$ with $x_1<x_2<x_3$; 
\item $(x_{1}, [x_{2}, x_{3}]]$ where $x_{1}\notin {\overline U}, x_{2}\in {\overline
U}^{(2)}$, and $x_{3}\in {\overline U}^{(3)}$;  
\item $([x_{1}, x_{2}], x_{3}]$ where $x_{1}\notin {\overline U}$, $x_{2}\notin
{\overline
U}^{(2)}$, and $x_{3}\in {\overline U}^{(3)}$;
\item $(x_{2}, x_{3}]$ where $x_{2}\notin {\overline U}^{(2)}, x_{3}\in {\overline
U}^{(3)}$,
and there is no $x_{1}\in {\overline U}$ with $x_1<x_2$. 
\end{enumerate}

\includegraphics[width=9cm]{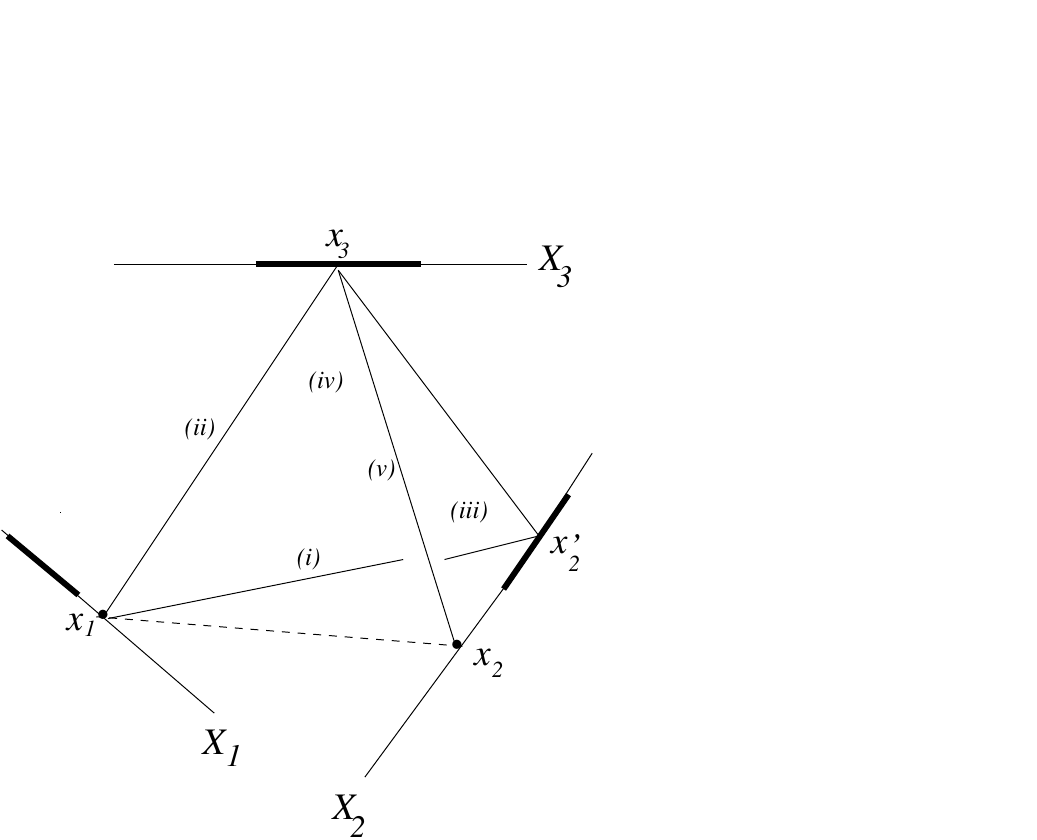}

\vskip 5pt
Then
$$f^{-1}(\upset {\overline U})=\Delta (\upset {\overline U}) \cup R^{o}.$$ 
\vskip 5pt
The set $X_{2}\coprod X_{3}$ is a sub-poset of $\mathcal X$, and we denote
its order complex by $\Delta _{23}$. Recall that $\Delta _{23}$ consists
of copies of $X_2$ and $X_3$ together with a segment\footnote{In all
such expressions, the sum of coefficients is understood to be $1$, and a
segment such as this will require an obvious adjustment when $t_{2}=0$ or
$1$.} $t_{2}x_{2}+t_{3}x_{3}$ joining $x_2$ to $x_3$ whenever $x_{2}<x_3$.
\vskip 5pt

This gives rise to another poset ${\mathcal Y} \coloneqq X_{1}\coprod
\Delta _{23}$ with the partial ordering $x_{1}< t_{2}x_{2}+t_{3}x_{3}$
when $x_1<x_2<x_3$; $\mathcal Y$ is given the Up topology. It is a geometric poset
because
$\mathcal X$ is. We denote
the corresponding Comparison Map by $g \colon \Delta ({\mathcal Y})\to
{\mathcal Y}$. Then, as subsets of $X_{1}\ast X_{2}\ast X_{3}$,
we have

$$f^{-1}(\upset {\overline U})=g^{-1}(\upset {\overline U}).$$

Since $g^{-1}(\upset {\overline U})$ is contractible by Proposition \ref{contractible}, we
conclude that  
$f^{-1}(\upset {\overline U})$ is contractible.
\vskip 5pt
Next we consider the case where $U\subseteq X_2$. We have the Comparison Map 
$f_{23} \colon \Delta _{23}\to X_{2}\coprod X_{3}$. By Proposition \ref{contractible},
$f_{23}^{-1}({\upset {\overline U}})$ is contractible. But $f^{-1}({\upset {\overline
U}})$ is
larger in general, as it includes deleted segments and $2$-simplexes having a vertex in
$f_{23}^{-1}({\upset {\overline U}})$, the deletion being the (non-empty) part lying in
$X_1$.
However, as before\footnote{Compare the discussion of $U\in {\mathcal U}_2$ in
Section \ref{2.1}.}, $f_{23}^{-1}({\upset {\overline U}})$ is a strong deformation
retract of
$f^{-1}({\upset {\overline U}})$, so the latter is contractible\footnote{Compare the
discussion preceding Proposition \ref{cone}.}.
\vskip 5pt
Finally, there is the case where $U\subseteq X_3$. In that case, just
as in the previous paragraph, $\upset {\overline U}$ is a strong
deformation retract of $f^{-1}({\upset {\overline U}})$, and therefore
the latter is contractible. Just as Theorem \ref{weakly closed} implies Theorem
\ref{weakly}
so we conclude:
\vskip 5pt
\begin{prop}\label{contractible3} For basis elements $U$, the space $f^{-1}(\upset
{U})$ is
contractible.     
\end{prop}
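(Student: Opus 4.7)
The plan is to imitate the passage from Corollary \ref{contractible} to Corollary \ref{contractible2}, bootstrapping from the three compact cases $U\subseteq X_1$, $U\subseteq X_2$, $U\subseteq X_3$ that were just dispatched. In each of those three cases we have already shown that $f^{-1}(\upset {\overline U})$ is contractible, so what remains is to transfer contractibility from the closed situation to the open one.

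First I would choose, inside the given basis element $U\in {\mathcal U}_i$, a nested sequence of basis elements $V_k$ with ${\overline V}_k \subseteq V_{k+1}$ and $\bigcup_k {\overline V}_k = U$; such a sequence exists because, by our Convenient Choice of Bases, each basis element is a PL cone on its frontier inside a locally compact polyhedron. By continuity of $f$ together with the Openness Property,
\[
f^{-1}(\upset U) \;=\; \bigcup_{k} f^{-1}(\upset V_k) \;=\; \bigcup_{k} f^{-1}(\upset {\overline V}_k).
\]

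Next I would establish weak contractibility. Any singular sphere $\varphi\colon S^n \to f^{-1}(\upset U)$ has compact image, hence is contained in some $f^{-1}(\upset {\overline V}_k)$. By the three cases already handled in the paragraphs preceding this proposition, $f^{-1}(\upset {\overline V}_k)$ is contractible, so $\varphi$ is null-homotopic already inside $f^{-1}(\upset U)$. This shows $\pi_n(f^{-1}(\upset U))=0$ for every $n$.

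The final step is to upgrade weak contractibility to genuine contractibility, and this is the only non-routine point. I would argue that $f^{-1}(\upset U)$ is an open subset of $\Delta({\mathcal X})$ and that, on each compact piece $f^{-1}(\upset {\overline V}_k)$, the order complex is polyhedral — this is the natural extension of Lemma \ref{delta} to the ${\mathcal R}=[3]$ setting, obtained by iterating the mapping-cylinder gluing used there. Consequently $f^{-1}(\upset U)$ has the homotopy type of a CW complex (equivalently, is an ANR), and Whitehead's theorem promotes its weak contractibility to contractibility. The main obstacle is precisely this local polyhedrality check: one must keep track of how the segments and $2$-simplexes described in the five cases of $R^o$ above assemble into a compact polyhedron over each ${\overline V}_k$, which is where the Polyhedral Diagonal Property together with Proposition \ref{poly} do the real work.
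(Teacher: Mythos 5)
Your proposal is correct and follows essentially the same route the paper takes: the paper's own proof just says ``As in Corollary \ref{contractible2} we conclude,'' which unpacks to exactly your exhaustion of $U$ by compacta ${\overline V}_k$, the compact-image argument for weak contractibility, and the observation that $f^{-1}(\upset U)$ is an open subset of the polyhedron $\Delta({\mathcal X})$ and hence of CW homotopy type, so Whitehead upgrades weak contractibility to contractibility. The only superfluous part of your write-up is the ``local polyhedrality check'' on each $f^{-1}(\upset {\overline V}_k)$: once you know $f^{-1}(\upset U)$ is open in the (possibly non-locally-finite) polyhedron $\Delta({\mathcal X})$, the CW homotopy type follows directly and no per-piece verification is needed.
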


\section{The General Case}\label{general}
The general case involves a geometric $M$-poset $\mathcal X$ equipped with a mirror
map $\mu :{\mathcal X}\to {\mathcal R}$, where ${\mathcal R}$ is a finite poset.
\vskip 5pt
\subsection{Weak contractibility of basis elements}
In Section \ref{3weak} we showed that when ${\overline U}^{(1)}$ is
the closure of a basis element of $X_1$ then $\uparrow {\overline U}^{(1)}$
is weakly contractible. The method was to consider a map $g \colon
(B,\del B)\to (\upset {\overline U^{(1)}}, {\overline U^{(1)}}\bigcup
{\overline U^{(2)}})$, define $C=g^{-1}({\overline U}^{(1)}\bigcup
{\overline U}^{(2)})$ and $C_{1}=g^{-1}({\overline U}^{(1)})$. We picked
a closed subset $Y$ of $B-C$ associated with $C_1$, and first adjusted
$g$ on $Y\bigcup C$. Then we made a further adjustment on $Z=B-(C_{1}\bigcup Y)$.
\vskip 5pt
If we were dealing with the case ${\mathcal R}=[4]$ we would do the
same thing with one extra step:  $C_{1}$ would be $g^{-1}({\overline
U}^{(1)})$,  $C_{2}$ would be $g^{-1}({\overline U}^{(1)}\bigcup
{\overline U}^{(2)})$ and $C$ would be $g^{-1}({\overline U}^{(1)}\bigcup
{\overline U}^{(2)}\bigcup {\overline U}^{(3)})$. There would be subspaces 
$Y \coloneqq\{x \in B-C\mid d(x,C)=d(x,C_1)\}$ and 
$Z \coloneqq \text{the closure of }\{x \in B-C \mid x\notin Y \text{and }
d(x,C)=d(x,C_2)\}$ 
associated with $C_1$ and $C_2$. The adjustment of
$g$ would be made first on $Y\bigcup C$, then on $Z\bigcup C$ and finally on the rest
of $B-C$.
Thus $g$ would be homotopic, rel $\del B$, to a map whose image
misses ${\overline U}^{(4)}$.
\vskip 5pt
Enough has been said here to indicate that the same is true for 
${\mathcal R}=[m]$; namely, if $U^{(1)}$ is a basis element of $X_1$ then 
$\uparrow U^{(1)}$ is weakly contractible.
\vskip 5pt
To get the same result for an arbitrary finite poset $\mathcal R$, we need
to order its elements with care. Let ${\mathcal R}_{0}$ denote the set of minimal
elements of ${\mathcal R}$. For $i>0$ let
$${\mathcal R}_{i} \coloneqq\{r\in {\mathcal R}\mid \text{the longest chain connecting
} r \text{ to an element of }{\mathcal R}_{0} \text{ has length }i\}.$$ 
We choose a total ordering of  $\mathcal R$ so that, for all $i$, every element of
${\mathcal
R}_i$ comes before any element of ${\mathcal R}_{i+1}$. We call this the 
{\it useful ordering} to distinguish it from the given partial ordering
on $\mathcal R$. The useful ordering ensures a key point: that for any $r$, 
$\bigcup \{X_{s}\mid s \text{ precedes }r\text{ in the useful ordering}\}$ is closed in the
Up
topology on ${\mathcal X}$; this is because elements indexed by the same $i$ are
incomparable. 
\vskip 5pt
We are to show that, for any basis element $U^{(s)}$ in $X_s$, $\upset
{\overline U}^{(s)}$ is weakly contractible. There is no loss of generality in
assuming $s\in \mathcal R_{0}$. Let $m$ be the
maximal element in the useful ordering such that $s\leq m$. We consider a map 
$g \colon (B,\del B)\to (\upset {\overline U}^{(s)}, \upset {\overline U}^{(s)}-{\overline
U}^{(m)})$. For the argument it is important that 
$g^{-1}(\upset {\overline U}^{(s)}-{\overline U}^{(m)})$ be compact, and this is
assured
by the ``key point" in the previous paragraph.

If the  pieces $U^{(t)}$ with $s\leq t<m$ are taken in ascending order with respect 
to the useful ordering, then a 
generalization of the above
discussion for ${\mathcal R}=[3]$ and ${\mathcal R}=[4]$ adjusts $g$, rel
$\del B$, to a map whose image misses ${\overline U}^{(m)}$. In other words, for all
$n$,  
$\pi _{n}(\upset {\overline U}^{(s)}, \upset {\overline U}^{(s)}-{\overline
U}^{(m)})=0$
The homotopy exact sequence then implies 
$\pi _{n}(\upset {\overline U}^{(s)}-{\overline U}^{(m)})=0$. 
By induction on $|{\mathcal R}|$ this gives
\begin{thm}For all $n$ and $s$, $\pi _{n}(\upset {\overline U}^{(s)})=0$. 
\end{thm} 
\vskip 5pt
Just as in Section \ref{2.2} this leads to\footnote{See ``Proof of this implication"
following the statement of Theorem \ref{weakly closed}.}:  
\begin{cor}\label{opengeneral} For all $s\in {\mathcal R}$ the set $\upset {U^{(s)}}$ is
weakly
contractible.
\end{cor}


\subsection{Contractibility of pre-images of basis elements}\label{4.1}
The Comparison Map $f:\Delta ({\mathcal X})\to {\mathcal X}$ is, as before,
$f(z)=\text{max}\{x_{i}\mid i\in \text{ supp}(z)\}$. Let $r\in
{\mathcal R}$ and let $U\subseteq X_r$ be a basis element. 
\vskip 5pt
\begin{prop}\label{preimagegeneral} $f^{-1}(\uparrow {\overline U})$ is contractible.
\end{prop}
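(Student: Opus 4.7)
The plan is to follow the strategy of Section \ref{3.1} and reduce the general case to the 2-case (Corollary \ref{contractible2}) by two successive reductions. First I would peel off the levels strictly below $r$ by a linear strong deformation retract onto the sub-poset $\mathcal{X}_{\geq r}$. Then I would collapse the levels strictly above $r$ into a single ``upper level'' via the order complex, producing a 2-level geometric $M$-poset to which Corollary \ref{contractible2} applies directly.

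For the first reduction, suppose $r$ is not minimal in $\mathcal{R}$. Let $\mathcal{X}_{\geq r} \coloneqq \coprod_{s\geq r} X_s$, which inherits the structure of a geometric $M$-poset over the sub-poset $\mathcal{R}_{\geq r}$, and let $f_{\geq r}\colon \Delta(\mathcal{X}_{\geq r}) \to \mathcal{X}_{\geq r}$ be its Comparison Map. The set $f^{-1}(\upset \overline{U})$ consists of $f_{\geq r}^{-1}(\upset \overline{U}) \subseteq \Delta(\mathcal{X}_{\geq r})$ together with the simplices of $\Delta(\mathcal{X})$ having at least one vertex at some level $s < r$ and whose maximum vertex still lies in $\upset \overline{U}$. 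Within each such simplex, a straight-line deformation away from the ``low'' face (spanned by vertices at levels $<r$) toward the ``high'' face (spanned by vertices at levels $\geq r$) fits together globally to give a strong deformation retract of $f^{-1}(\upset \overline{U})$ onto $f_{\geq r}^{-1}(\upset \overline{U})$, exactly as in the treatment of $U \subseteq X_2$ and $U \subseteq X_3$ in Section \ref{3.1}. So we may assume $r$ is minimal in $\mathcal{R}$.

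Assuming $r$ is minimal, I would then form the 2-level auxiliary poset
\[
\mathcal{Y} \coloneqq X_r \coprod \Delta(\mathcal{X}_{>r}),
\]
where $\mathcal{X}_{>r} \coloneqq \coprod_{s>r} X_s$, with the partial ordering that declares $x < z$, for $x \in X_r$ and $z = \sum t_i x_i \in \Delta(\mathcal{X}_{>r})$, whenever $x < x_i$ in $\mathcal{X}$ for every $i \in \mathrm{supp}(z)$. Because $\mathrm{supp}(z)$ is a chain, this reduces to the single inequality $x < x_{i_{\min}}$, where $i_{\min}$ is the minimum of $\mathrm{supp}(z)$. Under the canonical identification $\Delta(\mathcal{Y}) = \Delta(\mathcal{X})$, the Comparison Map $g \colon \Delta(\mathcal{Y}) \to \mathcal{Y}$ satisfies $g^{-1}(\upset \overline{U}) = f^{-1}(\upset \overline{U})$, and Corollary \ref{contractible2} applied to $\mathcal{Y}$ then yields the contractibility of this set.

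The main obstacle is verifying that $\mathcal{Y}$ is a geometric 2-level $M$-poset in the precise sense of the paper. The space $\Delta(\mathcal{X}_{>r})$ is only polyhedral in the extended sense of Remark~5 of Section \ref{alpha} (it is a finite topological join of polyhedra), so one must check that the convenient basis can still be selected in it. Properties A1 and A2 for $\mathcal{Y}$ follow from their counterparts for $\mathcal{X}$ level by level, using Proposition \ref{upcompact} to control compactness of iterated pre-images; Property A3 for $\mathcal{Y}$ reduces to Property A3 for $\mathcal{X}$ applied at $x_{i_{\min}}$ via the minimum-vertex observation above. Once these verifications are in place, the two reductions compose to prove the proposition.
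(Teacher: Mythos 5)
Your proof takes essentially the same approach as the paper's. The paper phrases the argument as an induction on $|\mathcal{R}|$: when $r$ is not minimal it applies the inductive hypothesis to the sub-$M$-poset $\uparrow X_r$ (indexed by a proper subset of $\mathcal{R}$) and then deforms the extra deleted simplices onto $g^{-1}(\uparrow\overline{U})$; when $r$ is minimal it forms the auxiliary two-level poset $\mathcal{Y}=X_r\coprod\Delta(\mathcal{X}-X_r)$, asserts $f^{-1}(\uparrow\overline{U})=g^{-1}(\uparrow\overline{U})$, and invokes the two-level result. Your two successive reductions simply unroll that induction: the deformation-retract step onto $\mathcal{X}_{\geq r}$ matches the non-minimal step, and the collapse to $\mathcal{Y}=X_r\coprod\Delta(\mathcal{X}_{>r})$ matches the minimal step. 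You are somewhat more explicit than the paper in spelling out the partial order on $\mathcal{Y}$ (the reduction to $x<x_{i_{\min}}$) and in flagging the need to verify that $\Delta(\mathcal{X}_{>r})$ is a polyhedron in the extended sense and that $\mathcal{Y}$ is geometric; the paper handles both of these with the phrase ``the details imitate the construction given explicitly in Section~\ref{3.1}.'' One small slip: since the statement concerns the compact set $\overline{U}$, the relevant two-level input is Corollary~\ref{contractible} (for closed $\overline{U}$), not Corollary~\ref{contractible2} (for open $U$).
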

\begin{proof}
We imitate Section \ref{3case}, inducting on the cardinality of $\mathcal R$. When 
$\mathcal R$ is a singleton the proposition is trivially true. When $\mathcal R$ has two
elements
the proposition has been proved. We assume $\mathcal R$ has at least three elements.
\vskip 5pt
There are two cases
\vskip 5pt
First assume $r$ is a minimal element of $\mathcal R$. Then
$$f^{-1}(\uparrow {\overline U})=\Delta (\uparrow {\overline U}) \bigcup
R^{o}$$ where $R^o$ is the union of deleted simplexes, each having some
of its vertices outside $\uparrow {\overline U}$ and the remaining vertices
(at least one) in $\uparrow {\overline U}$, the deletions being the
faces lying outside $\uparrow {\overline U}$.  Now, ${\mathcal Z} \coloneqq{\mathcal
X}-X_r$ is a sub-poset. From it we can construct a new poset ${\mathcal
Y} \coloneqq X_{r}\coprod \Delta ({\mathcal Z})$ where the details imitate the
construction given explicitly in Section \ref{3.1}. The Comparison Map
is $g:\Delta ({\mathcal Y})\to {\mathcal Y}$. Then $$f^{-1}(\uparrow
{\overline U})=g^{-1}(\uparrow {\overline U}).$$ By induction and
Corollary \ref{contractible}, $g^{-1}(\uparrow {\overline U})$ is
contractible, so we conclude that $f^{-1}(\uparrow {\overline U})$
is contractible.

\vskip 5pt
In the other case, $r$ is not minimal. Then there is a proper
sub-$M$-poset $\uparrow X_r$ indexed by a proper subset of $\mathcal
R$. By induction on the cardinality of $\mathcal R$, $g^{-1}(\uparrow {\overline
U})$ is contractible, where $g$ denotes the Comparison Map for  $\uparrow
X_r$. Then $$f^{-1}(\uparrow {\overline U})=g^{-1}(\uparrow {\overline
U})\bigcup R^{o}$$ where $R^o$ is the union of deleted simplexes, each
having some of its vertices outside $\uparrow {\overline U}$ and at least one vertex in
$\uparrow{\overline U}$, the deletions being the faces lying  outside $\uparrow
{\overline U}$.
These deleted simplexes deform to $g^{-1}(\uparrow {\overline U})$. So 
$f^{-1}(\uparrow
{\overline U})$ is contractible.  
\end{proof}
\vskip 5pt
As before, we conclude:
\begin{cor}\label{preimagegeneral} 
For basis elements $U$, $f^{-1}(\uparrow U)$ is contractible 
\end{cor}
\vskip 5pt
Together, Corollaries \ref{opengeneral} and \ref{preimagegeneral} conclude the proof of
Theorem
\ref{main}.

\section{Appendix on the shape theory used in this paper.}\label{appendix}

Shape theory is a variant of homotopy theory which works better than
regular homotopy theory when the spaces are not locally nice. Here we
give a brief review with the sole purpose of  amplifying our use of the
theory in Section \ref{2.1}.

\vskip 5pt
Shape theory considers a compact metrizable space $X$ by dealing instead with an
arbitrary 
inverse sequence of compact polyhedra and maps whose inverse limit is homeomorphic to
$X$.
\vskip 5pt

$$
\xymatrix{
&X_{1}
&X_{2}\ar[l]^{h_{1}}
&X_{3}\ar[l]^{h_{2}}
&\ar[l]^{h_{3}}{\cdots}\\
}
$$

\vskip 5pt
In the following infinite diagram of space and maps, the spaces
are compact polyhedra. We denote the inverse limit of the top line
[resp. bottom line] by $X$ [resp.$Y$]. The outer squares are assumed to
be homotopy commutative. If there exist diagonal maps making the entire
diagram homotopy commutative then $X$ and $Y$ are said to be {\it shape
equivalent}\footnote{This is not the general definition but is enough
for our purposes. For more details on what is discussed in this Appendix, see \cite{Gu}.}
\vskip 5pt
$$
\xymatrix{
&X_{1}\ar[d]
&X_{2}\ar[l]\ar[d]
&X_{3}\ar[l]\ar[d]
&\ar[l]{\cdots}\\
&Y_{1}
&Y_{2}\ar[l]\ar[ul]
&Y_{3}\ar[l]\ar[ul]
&\ar[l]\ar[ul]\cdots\\
}
$$

\vskip 5pt

Now consider the special case in which all the maps $h_i$ are homotopy equivalences.
Then we
have the following homotopy commutative diagram, where $h_{i}^{-1}$ stands for a
homotopy
inverse of $h_i$:
$$
\xymatrix{
&X_{1}\ar[d]_{id}
&X_{1}\ar[l]_{id}\ar[d]^{h_{1}^{-1}}
&X_{1}\ar[l]_{id}\ar[d]^{(h_{1}h_{2})^{-1}}
&\ar[l]_{id}{\cdots}\\
&X_{1}
&X_{2}\ar[l]^{h_{1}}\ar[ul]_{h_{1}}
&X_{3}\ar[l]^{h_{2}}\ar[ul]_{h_{1}h_{2}}&\ar[l]^{h_{3}}{\cdots}\\
}
$$

We conclude that in this case $X$ is shape equivalent to the compact
polyhedron $X_1$. Moreover, in the application in Section \ref{2.1}, $X$
itself is known to be a compact polyhedron. It is a basic theorem of shape
theory that {\it two polyhedra are shape equivalent if and only if they
are homotopy equivalent.}

\vskip 5pt
In the context of Theorem \ref{inverse} this discussion explains why, for $A$ compact, 
$\Delta(\uparrow A)$ is homotopy equivalent to each $X_n$ and hence
to $f^{-1}(\uparrow A)$.

\section{Appendix on the discrete case and McCord's work.}\label{discrete}
\vskip 5pt

A quick review of the present paper shows that the proof of the Comparison 
Theorem \ref{main} becomes very easy in the case when each $X_r$ is
discrete. Confining ourselves to the special case dealt with in Section
\ref{2case}, the relevant version of Proposition \ref{inverse} becomes almost
trivial, since the issue of limit points (which required a shape theoretic
argument) is not present, and the need for Proposition \ref{cone} is
gone since pre-images of basic sets $\upset x$ are cones. The issues
dealt with in Section \ref{2.2} become trivial because Proposition \ref{cover}
implies that each singleton $\{x\}$ is a strong deformation retract of
the set $\upset x$.

\vskip 5pt

McCord's Theorem $3$ in \cite{Mc2} implies our Comparison Theorem \ref{main}
in this discrete case, and his proof is indicated by the contents of the
previous paragraph. It should be added, however, that his definition of
$\Delta ({\mathcal X})$ is superficially different from ours. In his work,
the role of $\Delta ({\mathcal X})$ is played by $|K({\mathcal X})|$,
where $K({\mathcal X})$ is the classical order complex defined by the poset
$\mathcal X$, namely: simplexes are finite chains in the poset. However,
it is clear that $|K({\mathcal X})|$ and $\Delta ({\mathcal X})$
are the same in our setting when $\mathcal X$ is discrete.
\vskip 5pt

McCord uses the ``Down topology" rather than the ``Up topology" but
that is of no consequence as the two are dual to one another
and order complexes are preserved under duality.

\vskip 5pt
It should be added that McCord's Theorem $3$ covers cases outside the setting of the
present
paper. 
\section{Appendix on homotopy equivalence}
\vskip 5pt
In Theorem \ref{main} we showed that for geometric $M$-posets $\mathcal X$ the
Comparison
Map $f \colon \Delta ({\mathcal X})\to {\mathcal X}$ is a weak homotopy equivalence.
The
Whitehead Theorem implies that $f$ is a homotopy equivalence if and only if $\mathcal
X$ has
the homotopy type of a $CW$ complex. In this appendix we show that every member of
an
important class of topological posets (with the Up topology) does not have the homotopy
type of
a $CW$ complex.
\vskip 5pt
Let $\mathcal X$ be a topological poset, not necessarily geometric; it is considered with
the Up
topology. Define
an equivalence relation on $\mathcal X$ by $p\sim q$ if and only if there
is a finite sequence $(x_i)$ in $\mathcal X$ with $x_0=p$, $x_n=q$,
and, for each $i\geq 0$, either $x_i \leq x_{i+1}$ or $x_i \geq x_{i+1}$.
By a proof similar to that of Proposition \ref{cover} each equivalence class is seen to lie
in a path
component of ${\mathcal X}$. We say that
$\mathcal X$ has {\it discrete type} if each equivalence class is an
entire path component\footnote{The topological poset $\mathcal X$ has
an underlying discrete poset which we denote by ${\mathcal X}_{\delta}$,
equipped with the Up topology. Comparing  ${\mathcal X}_{\delta}$ with its
(classical) order complex, one easily sees that each equivalence class
is a path component of ${\mathcal X}_{\delta}$. This explains the term
``discrete type".}
\vskip 5pt
\begin{example} For each $n$ the real Grassmann Poset 
$\mathcal{G}_n(\mathbb{{\R}})$ has discrete type.
\end{example}
\begin{lemma}\label{all} Each equivalence class is a  closed subset of $\mathcal X$.
\end{lemma}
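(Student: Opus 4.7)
The plan is to realize each equivalence class $E$ as a path component of $\mathcal{X}$ equipped with the Up topology; from there, clopenness — and hence closedness in both the Up and the Original topology — follows at once from local path-connectedness.

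First I would verify that $(\mathcal{X}, \text{Up})$ is locally path-connected. By the convenient choice of bases from Section \ref{toppo}, the basic Up-open sets take the form $\upset U$ with $U$ a path-connected polyhedral cone in some $X_r$; such a set is path-connected, because Proposition \ref{cover} supplies a continuous path in $\upset U$ from any $z \in \upset U$ down to a witness $u \in U$ with $u \leq z$, while any two points of $U$ are joined by a path in $U$ itself. Hence every point of $\mathcal{X}$ has a path-connected neighborhood basis in the Up topology, so Up-path-components are clopen.

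Second I would identify $\sim$-equivalence classes with Up-path-components. The forward containment is immediate from Proposition \ref{cover}: each comparability $x \leq y$ gives a continuous path, and a finite $\sim$-chain concatenates to a continuous path. The reverse containment is supplied by the discrete-type hypothesis that organizes this appendix, namely that every equivalence class is an \emph{entire} path component. Combining these two steps, $E$ equals a path component of a locally path-connected space, hence is clopen, hence in particular closed.

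The main obstacle is the reverse containment in the second step: without discrete type, one would need to convert an arbitrary continuous Up-path $\gamma$ from $p$ to $q$ into a $\sim$-chain. The natural attempt is to cover $[0,1]$ by finitely many $\gamma^{-1}(\upset U_k)$, pick overlap points $\gamma(t_k)\in \upset U_k\cap \upset U_{k+1}$, and chain via witnesses $u_k\in U_k$ with $u_k\leq \gamma(t_k)$; but this argument stalls because two witnesses that happen to lie in the same $U_k$ need not themselves be $\sim$-equivalent. The discrete-type hypothesis bypasses exactly this difficulty. If one wished to drop that hypothesis, a more delicate stratum-wise argument is needed: use that each $X_r$ is clopen (finite $\mathcal{R}$, each $X_s$ closed) to reduce to showing $E\cap X_r$ is closed in $X_r$; then, for a limit $q$ of $q_n\in E\cap X_r$, fix a chain $q_n\sim p$, pigeonhole on $\mathcal{R}$ to get a common mirror-pattern, and extract limits of intermediate elements using A1 and A2 for upward chains (and rerouting downward chains through a common upper bound), propagating comparabilities to the limit via closedness of $P$.
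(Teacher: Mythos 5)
Your argument takes a completely different route from the paper's, and it has a genuine gap. The paper's own proof is a short, direct argument from closedness of $P$ alone: non-comparability is an open condition in $\mathcal{X}\times\mathcal{X}$, and from this the paper deduces that a limit point of an equivalence class already belongs to the class. You instead route through the Up topology on an $M$-poset: you show that basic Up-open sets $\upset U$ are path-connected (via Proposition~\ref{cover}), conclude that Up-path-components are clopen, and then match those components with $\sim$-classes.

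The trouble is that the lemma is stated for an arbitrary topological poset, explicitly \emph{not necessarily geometric}, and the appendix it sits in opens by dropping all of the $M$-poset machinery. Your argument leans on that machinery at every step: a convenient polyhedral basis, the Openness Property, the Up topology itself, and Proposition~\ref{cover} all presuppose an $M$-poset, indeed essentially a geometric one, and none of this is available at the level of generality being claimed. More seriously, as you acknowledge, the identification of $\sim$-classes with Up-path-components needs the discrete-type hypothesis; but the remark following Theorem~\ref{ok} says that Lemma~\ref{all} is ``true whether or not $\mathcal{X}$ has discrete type,'' so a proof that requires discrete type proves strictly less than what is asserted. Your closing sketch of a stratum-wise argument to remove that hypothesis again invokes A1, A2, and finiteness of $\mathcal{R}$ --- geometric $M$-poset structure --- and is not carried through. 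Finally, when you do compare with the paper's own two-line proof, it is worth lingering on the inference from ``non-comparability is open'' to ``limit points of a class lie in the class'': the class is the \emph{transitive} closure of comparability, and closedness of a relation does not in general pass to its transitive closure, so that step merits careful scrutiny in its own right.
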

\begin{proof} The relation ${\mathcal P}$ in the definition of topological poset is closed
in
${\mathcal X}\times \mathcal X$, so the condition of not being related is an open
condition. It
follows that
every limit point of an equivalence class lies in that equivalence class.
\end{proof}
\vskip 5pt
For any space $Y$ the space of path components, $\pi _{0}(Y)$, occurs as a quotient
space in a
natural way. When $Y$ is a $CW$ complex this is a discrete space.
\vskip 5pt
\begin{thm}\label{ok} Let $\mathcal X$ have discrete type. Then $\mathcal X$ has the
homotopy type of a $CW$ complex if and only if each path component is contractible,
and the
space of path components $\pi _{0}({\mathcal X})$ is discrete.
\end{thm}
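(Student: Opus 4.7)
The sufficient direction is quick. Since $\pi_{0}(\mathcal X)$ is discrete, each path component of $\mathcal X$ is open (and closed, by Lemma \ref{all}), so $\mathcal X$ is the topological coproduct of its path components. Assuming each is contractible, $\mathcal X$ is homotopy equivalent to the discrete space $\pi_{0}(\mathcal X)$, which is a zero-dimensional $CW$ complex.

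For the necessary direction, suppose $f \colon \mathcal X \to W$ and $g \colon W \to \mathcal X$ are mutually inverse homotopy equivalences with $W$ a $CW$ complex, and let $\{W_\alpha\}$ denote the open path components of $W$. The first task is to show $\pi_{0}(\mathcal X)$ is discrete. For any path component $E$ of $\mathcal X$, continuity forces $f(E) \subseteq W_\alpha$ for a unique $\alpha$, giving $E \subseteq f^{-1}(W_\alpha)$. For the reverse inclusion, if $x \in f^{-1}(W_\alpha)$ then $gf(x) \in g(W_\alpha)$; the path-connected set $g(W_\alpha)$ meets $E$ (via $gf(x_{0})$ for any $x_{0} \in E$), hence lies entirely in $E$; and the homotopy $gf \simeq \text{id}$ places $x$ in the same path component as $gf(x)$, namely $E$. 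Therefore $E = f^{-1}(W_\alpha)$ is open, so $\pi_{0}(\mathcal X)$ is discrete.

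Consequently $\mathcal X = \coprod_\alpha E_\alpha$ and $W = \coprod_\alpha W_\alpha$ as topological coproducts, and the homotopy equivalence restricts componentwise to a homotopy equivalence $E_\alpha \simeq W_\alpha$, with $W_\alpha$ a path-connected $CW$ complex. It remains to show each $E_\alpha$ is contractible. Here the discrete type assumption is essential: $E_\alpha$ is a single $\sim$-equivalence class, so any two points are joined by a finite zigzag of comparabilities. Via Proposition \ref{cover}, each step $p \leq q$ supplies a homotopy between the constant maps $c_{p}$ and $c_{q}$, so by transitivity all constant maps $E_\alpha \to E_\alpha$ are mutually homotopic. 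Combining this with the relation $\text{id}_{E_\alpha} \simeq gf$, the plan is to deform $gf$ through $E_\alpha$ to a constant by exploiting the path-connectedness of $W_\alpha$ together with the jump-path homotopies of Proposition \ref{cover}, yielding a null-homotopy of $\text{id}_{E_\alpha}$ and hence contractibility.

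The main obstacle is this final step: rigorously producing the null-homotopy of $\text{id}_{E_\alpha}$. The subtlety is that Proposition \ref{cover} provides flexibility only for comparable pairs, so intertwining these jump-path homotopies along zigzags while respecting the continuity imposed by the Up topology requires care. One pragmatic route is via Whitehead's theorem: reduce to showing that $E_\alpha$ is weakly contractible, i.e., $\pi_{n}(E_\alpha) = 0$ for all $n \geq 1$; since $E_\alpha$ has the homotopy type of the $CW$ complex $W_\alpha$, weak contractibility upgrades to contractibility. Proving weak contractibility in turn amounts to using the equivalence-class structure of $E_\alpha$ together with the $CW$-homotopy hypothesis to null-homotope any spherical map, which is the technical heart of the argument.
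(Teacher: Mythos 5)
Your sufficient direction is fine, and your argument that $\pi_{0}(\mathcal X)$ is discrete (via $E = f^{-1}(W_\alpha)$ with $W_\alpha$ open in $W$) is correct and in fact doesn't even need the discrete-type hypothesis. But there is a genuine gap in the necessary direction, which you yourself flag: you never actually produce the null-homotopy of $\mathrm{id}_{E_\alpha}$. The sketch via Proposition \ref{cover} and zigzags only shows that all \emph{constant} maps into $E_\alpha$ are mutually homotopic; it doesn't tell you that $\mathrm{id}_{E_\alpha}$ is homotopic to any one of them, and the proposed route through weak contractibility just restates the problem.

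The missing idea is much simpler and is the one the paper uses. In the Up topology, every open set containing $x$ also contains every $y > x$ (a basic open set $\upset U$ containing $x$ contains all of $\upset x$). Since the $CW$ complex $W$ is Hausdorff, the continuous map $g \colon \mathcal X \to W$ must therefore satisfy $g(y) = g(x)$ whenever $x < y$: otherwise disjoint neighborhoods of $g(x)$ and $g(y)$ would pull back to an open set containing $x$ but not $y$. Hence $g$ is constant on each $\sim$-equivalence class, and by discrete type on each path component $E_\alpha$. Then $hg\rvert_{E_\alpha}$ is a constant map into $E_\alpha$, and the restriction of the homotopy $hg \simeq \mathrm{id}$ to $E_\alpha$ stays inside $E_\alpha$, so $E_\alpha$ is contractible. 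This one observation about continuity into a Hausdorff target is what makes the whole proof short; it also gives the discreteness of $\pi_0(\mathcal X)$ directly (as $g$ factors through $\pi_0(\mathcal X)$ and induces a continuous bijection to the discrete $\pi_0(W)$), though your alternative route to that conclusion is equally valid.
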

\begin{proof} Let $h: K\to {\mathcal X}$ be a homotopy equivalence, where $K$ is a
$CW$
complex. Suppose there is a homotopy inverse $g$ for $h$. The fact
that  $g$ is continuous shows that if $x<y$ then $g(y)=g(x)$; hence $g$ is
constant on equivalence classes (i.e. on path components). The map $g$ induces
a continuous bijection $\pi _{0}({\mathcal X})\to \pi _{0}(K)$, so $\pi _{0}({\mathcal
X})$ is
discrete. And since $h\circ g$ is homotopic to the identity map, each path component of
$\mathcal X$ must be contractible. Conversely, the hypotheses imply that $\mathcal X$
has the
homtopy type of a discrete space.
\end{proof}
\begin{rem} It follows from Theorem \ref{ok} that equivalence classes
are closed. We include Lemma \ref{all} because this is true whether or
not $\mathcal X$ has discrete type. We wonder whether Theorem \ref{ok}
holds for all topological posets.  
\end{rem} 
\vskip 5pt

\bibliographystyle{amsalpha}
\bibliography{paper2024}{}

\end{document}